\newtheorem{prop}{Proposition}[section]
\newtheorem{lemma}[prop]{Lemma}
\newtheorem{thm}[prop]{Theorem}
\newtheorem{cor}[prop]{Corollary}
\theoremstyle{definition}
\newtheorem{defn}[prop]{Definition}
\newtheorem{rmk}[prop]{Remark}
\newtheorem{ex}[prop]{Example}
\newtheorem{ass}[prop]{Assumption}
\mathchardef\mhyphen="2D
\DeclareMathOperator{\rk}{rk}
\DeclareMathOperator{\spec}{Spec}
\DeclareMathOperator{\proj}{Proj}
\newcommand{\ra}{\rightarrow}
\def\AA{\mathbb A}\def\CC{\mathbb C}
\def\GG{\mathbb G}
\def\NN{\mathbb N}\def\PP{\mathbb P}
\def\RR{\mathbb R}
\def\ZZ{\mathbb Z}
 \def\U{\mathrm{U}}
\def\GL{\mathrm{GL}}
\title{Stratifications associated to reductive group actions on affine spaces}
\author{Victoria Hoskins}
\begin{document}

\maketitle

\begin{abstract}
For a complex reductive group $G$ acting linearly on a complex affine space $V$ with respect to a character $\rho$, we show two stratifications of $V$ associated to this action (and a choice of invariant inner product on the Lie algebra of the maximal compact subgroup of $G$) coincide. The first is Hesselink's stratification by adapted 1-parameter subgroups and the second is the Morse theoretic stratification associated to the norm square of the moment map. We also give a proof of a version of the Kempf--Ness theorem which states that the GIT quotient is homeomorphic to the symplectic reduction (both taken with respect to $\rho$). Finally, for the space of representations of a quiver of fixed dimension, we show that the Morse theoretic stratification and Hesselink's stratification coincide with the stratification by Harder--Narasimhan types.
\end{abstract}

\section{Introduction}

When a complex reductive group $G$ acts linearly on a complex projective variety $X\subset \PP^n$, then Mumford's geometric invariant theory (GIT) \cite{mumford} associates to this action a projective GIT quotient $X /\!/ G$ whose homogeneous coordinate ring is the $G$-invariant part of the homogeneous coordinate ring of $X$. The inclusion of the $G$-invariant subring induces a rational map $X \dashrightarrow X/\!/G$ which restricts to a morphism on the open subset $X^{\text{ss}} \subset X$ of semistable points. Topologically, the projective GIT quotient $X/\!/G$ is $X^{\text{ss}}/G$ modulo the equivalence relation that two orbits are equivalent if and only if their closures meet in $X^{\text{ss}}$. If $X$ is smooth, then it is a symplectic manifold with symplectic form given by restricting the K\"{a}hler form on $\PP^n$ and we can assume without loss of generality that the action of the maximal compact subgroup $K \subset G$ preserves this symplectic form. Then there is an associated moment map $\mu : X \ra \mathfrak{K}^*$ where $\mathfrak{K}$ is the Lie algebra of $K$. The symplectic reduction of Marsden and Weinstein \cite{mw} and Meyer \cite{meyer} is the quotient $\mu^{-1}(0)/K$. The Kempf--Ness theorem (see \cite{kempf_ness} and \cite{mumford} $\S$8) gives a homeomorphism between the symplectic reduction and GIT quotient
\[ \mu^{-1}(0)/K \simeq X^{\text{ss}}/\!/G.\]

Kirwan \cite{kirwan} and Ness \cite{ness} show that two stratifications of the smooth projective variety $X$ associated to this action (and a choice of $K$-invariant inner product on $\mathfrak{K}$) coincide. The first stratification is Hesselink's stratification by adapted 1-parameter subgroups (1-PSs) of $G$. We recall that the Hilbert--Mumford criterion is a numerical criterion used to determine semistability of points in terms of 1-PSs (see \cite{mumford} $\S$2). Kempf builds on these ideas and associates to any unstable point a conjugacy class for a parabolic subgroup of \lq adapted' 1-PSs which are most responsible for the instability of that point \cite{kempf} (where the norm associated to the given inner product is used to give a precise definition of most responsible). Hesselink shows the unstable locus $X-X^{\text{ss}}$ can be stratified by conjugacy classes of 1-PSs  \cite{hesselink}. We view this as a stratification of $X$ with the open stratum given by $X^{\text{ss}}$ (which we can think of as the stratum corresponding to the trivial 1-PS). The second stratification is a Morse theoretic stratification associated to the norm square of the moment map $|| \mu ||^2 : X \ra \RR$ where we use the norm associated to the given inner product. The strata are indexed by adjoint orbits $K \cdot \beta$ for $\beta \in \mathfrak{K}$ (or equivalently coadjoint orbits as the inner product allows us to identify $\mathfrak{K}^* \cong \mathfrak{K}$). For an adjoint orbit $K \cdot \beta$, we let $C_{K \cdot \beta}$ denote the set of subsets of critical points for $|| \mu ||^2$ on which $\mu$ takes a value in the coadjoint orbit corresponding to $K \cdot \beta$. The corresponding stratum $S_{K \cdot \beta}$ consists of all points whose negative gradient flow under $||\mu||^2$ converges to $C_{K \cdot \beta}$. For both stratifications, one can determine the index set for the stratification from the weights of the action of a maximal torus.

In this paper, we suppose $G$ is a complex reductive group acting linearly on a complex affine space $V$ and ask whether the same results still hold. One immediate difference from the above set up is that $V$ is not compact and so we may have some issues with convergence properties; luckily this turns out to not be a problem as in the algebraic setting we are only interested in actions of 1-PSs $\lambda(t)$ on $v \in V$ for which the limit as $t \to 0$ exist and in the symplectic setting the convergence of the negative gradient flow of the norm square of the moment map has already been shown by Harada and Wilkin \cite{haradawilkin} and Sjamaar \cite{sjamaar}.

The affine GIT quotient for the action of $G$ on $V$ is the morphism $V \ra V/\!/ G := \spec \CC[V]^G$ associated to the inclusion of the invariant subring $\CC[V]^G \hookrightarrow \CC[V]$. There is no notion of semistability yet and so the GIT quotient is topologically $V/G$ modulo the equivalence relation that two orbits are equivalent if and only if their closures meet. Therefore Hesselink's stratification of $V$ is the trivial stratification. If we take a Hermitian inner product on $V$ such that the maximal compact subgroup $K \subset G$ acts unitarily, then there is a natural moment map $\mu : V \ra \mathfrak{K}^*$ for the action of $K$. In this case there is only one index $0$ for the Morse stratification associated to $||\mu||^2$ (for any choice of inner product) with critical subset corresponding to $C_0=\mu^{-1}(0)$ and so the Morse stratification is also trivial. Therefore, trivially the stratifications agree. The affine GIT quotient is homeomorphic to the symplectic reduction (this is a special case of Theorem \ref{aff KN} below). However, in the compact setting the proof is given by exhibiting a continuous bijection between the compact symplectic reduction and the separated GIT quotient which is then a homeomorphism. In the affine case, we no longer have compactness and so we instead provide a continuous inverse going in the opposite direction by using the retraction $V \ra \mu^{-1}(0)$ associated to the negative gradient flow for $||\mu||^2$.

More generally, we can use a character $\rho$ of $G$ to get a non-trivial notion of semistability by using $\rho$ to lift the action of $G$ on $V$ to the trivial line bundle $L = V \times \CC$. In this situation there is a GIT quotient $V/\!/_\rho G$ (with respect to $\rho$) which is a quotient of an open subset $V^{\rho \text{-ss}}\subset V$ of $\rho$-semistable points. This construction was used by King to construct moduli space of quiver representations \cite{king}. More precisely, this GIT quotient is the quasi-projective variety
\[V/\!/_\rho G = \proj \bigoplus_{n \geq 0} \CC[V]^G_{\rho^n}\] 
where $\CC[V]^G_{\rho^n}=\{f(g \cdot v) =\rho^n(g)f(v) \text{ for all } v , g \}$ is the ring of semi-invariants of weight $\rho^n$. Thus, a point $v$ is $\rho$-semistable if and only if there is a semi-invariant $f$ of weight $\rho^n$ for $n > 0$ such that $f(v) \neq 0$. For infinite groups the origin is always $\rho$-unstable (for nontrivial characters $\rho$) and so Hesselink's stratification is nontrivial. On the symplectic side, if $\rho$ is a character of $K$, then we can use this to shift the moment map. We write $\mu^{\rho}$ for the shifted moment map and the symplectic reduction (with respect to $\rho$) is $(\mu^{\rho})^{-1}(0)/K$. We give a proof of an affine version of the Kempf--Ness theorem which states that the GIT quotient is homeomorphic to the symplectic reduction (both taken with respect to $\rho$).

Next we compare Hesselink's stratification with the Morse theoretic stratification of $V$ associated to a fixed $K$-invariant inner product on $\mathfrak{K}$. The main difference for Hesselink's stratification in the affine case is that for an unstable point $v$, to determine which 1-PSs are adapted to $v$ we only consider those for which $\lim_{t \to 0} \lambda(t) \cdot v$ exists. On the Morse theory side, since the negative gradient flow of the norm square of the moment map converges much of the picture remains the same as in the projective setting. In the projective setting, Kirwan \cite{kirwan} $\S$6 gave a further description of the Morse strata $S_{K \cdot \beta}$ in terms of Morse strata for the functions $\mu_\beta : V \ra \RR$ given by $v \mapsto \mu(v) \cdot \beta$ and $||\mu-\beta||^2$. In the affine case, we also provide a similar description; however, our description differs slightly to that of Kirwan due to the fact that the negative gradient flow of $\mu_\beta$ on $V$ does not always converge. We prove the Morse theoretic stratification coincides with Hesselink's stratification (both taken with respect to $\rho$) for a fixed $K$-invariant inner product on $\mathfrak{K}$. Furthermore, we show that the index set for Hesselink's stratification and the Morse theoretic stratification can be determined combinatorially from the weights associated to the action of a maximal torus (cf. $\S$\ref{hess ind} and $\S$\ref{sec desc indices} below).

Finally, we apply this to the case in which $V$ is the space of representations of a quiver of fixed dimension and $G$ is a reductive group acting on $V$ such that the orbits correspond to isomorphism classes of representations. It follows from above that the Morse stratification coincides with Hesselink's stratification, but we can also compare this to the stratification by Harder--Narasimhan types (where the notion of Harder--Narasimhan filtration depends on a choice of invariant inner product on the Lie algebra of the maximal compact subgroup of $G$). For a fixed inner product, we prove that all three stratifications coincide. For the case when the chosen inner product is the Killing form, the Harder--Narasimhan stratification has been described by Reineke in \cite{reineke} and Harada and Wilkin show that for this inner product the Harder--Narasimhan stratification and Morse theoretic stratification on $V$ coincide \cite{haradawilkin}. In his Ph.D. thesis, Tur \cite{tur} shows that Hesselink's stratification by adapted 1-PSs agrees with the Harder--Narasimhan stratification (for any choice of invariant inner product) and so it follows from this and the result we gave above that all three stratifications coincide. However we provide a concise proof of this fact for completeness of the paper. Whilst this paper was being completed, we note that Zamora has also given a proof that the Kempf filtration (this is a natural filtration associated to an adapted 1-PS) is equal to the Harder--Narasimhan filtration for quiver representations \cite{zamora}.

The layout of this paper is as follows. In $\S$\ref{sec GIT}, we give some results on affine geometric invariant theory. In particular, we give results for semistability of points with respect to a character $\rho$ and also describe Hesselink's stratification by adapted 1-PSs. In $\S$\ref{sec sym}, we describe the moment map, symplectic reduction and the Morse stratification associated to the norm square of the moment map for complex affine spaces. In $\S$\ref{sec corr}, we show for a fixed invariant inner product that the Morse stratification agrees with Hesselink's stratification. In this section we also give a proof of the affine Kempf---Ness theorem and prove an alternative description of the Morse strata. In $\S$\ref{sec quiv}, we apply the above to the space of representations of a quiver of fixed dimension. We define a notion of Harder--Narasimhan filtration which depends on the choice of invariant inner product and show that the stratification by Harder--Narasimhan types coincides with both the Morse stratification and Hesselink's stratification.

\subsection*{Acknowledgements}
I would like to thank Frances Kirwan for teaching me about these results in the projective setting. I also wish to thank Ben Davison, Alastair King and Graeme Wilkin for useful discussions and comments.  

\section{Affine geometric invariant theory}\label{sec GIT}

In this section we can work over an arbitrary algebraically closed field $k$ of characteristic zero. Let $G$ be a reductive group acting linearly on an affine space $V$ over $k$. Let $k[V]$ denote the $k$-algebra of regular functions on $V$; then there is an induced action of $G$ on $k[V]$ given by $g \cdot f(v) = f( g^{-1} \cdot v) $ for $f \in k[V]$ and $g \in G$. The inclusion of the invariant subalgebra $k[V]^G \hookrightarrow k[V]$ induces a morphism of affine varieties $V \ra V/\!/G$ which is known as the affine GIT quotient. In general this is not the same as the topological quotient $V/G$ as the affine GIT quotient identifies orbits whose closures meet. In particular if one orbit is contained in the closure of every other orbit (as is the case when $\GG_m$ acts on $\AA^n$ by scalar multiplication), then the affine GIT quotient $V/\!/G$ is simply a point.

To avoid such collapsing for reductive actions on affine spaces, we can instead use a non-trivial character $\rho : G \ra \GG_m$ to linearise the action so that we obtain a better quotient of an open subset of $V$ as follows. Let $L = V \times k$ denote the trivial line bundle on $L$; then we use $\rho$ to lift the action of $G$ on $V$ to $L$ so that
\[ g \cdot (v,c) = (g \cdot v, \rho(g)c)\]
for $g \in G$ and $(v,c) \in L = V \times k$. We write $L_\rho$ to denote the linearisation consisting of the line bundle $L$ and lift of the $G$-action given by $\rho$. We note that as linearisations $ L_\rho^{\otimes n}=L_{\rho^n}$ for all $n \in \ZZ$ where $L^{\otimes -1}_\rho:=L^{-1}_\rho=L_{\rho^{-1}}$ is the dual linearisation to $L_\rho$. There is an induced action of $G$ on $H^0(V, L_\rho^{\otimes n})$ given by
\[ g \cdot \sigma(v) = \rho^n(g) \sigma(g^{-1} v) \]
for $\sigma \in H^0(V, L_\rho^{\otimes n})$ and $g \in G$. We note that the invariant sections 
\[ H^0(V, L_\rho^{\otimes n})^G \cong k[V]^G_{\rho^n} := \{f \in k[V] : f(g \cdot v) = \rho^n(g)f(v) \text{ for all } v \in V, g \in G \} \]
are equal to the semi-invariants on $V$ of weight $\rho^n$.
Consider the graded algebra \[R := \oplus_{n \geq 0} H^0(V,L_\rho^{\otimes n})\] and its invariant graded subalgebra $R^G = \oplus_n R^G_n$ where $R^G_n=k[V]^G_{\rho^n}$ is the algebra of semi-invariants on $V$ of weight $\rho^n$. The inclusion $R^G \hookrightarrow R$ induces a rational map 
\begin{equation}\label{rat map}
V \dashrightarrow V/\!/_\rho G = \proj R^G \end{equation} 
which is undefined on the null cone
\begin{equation}\label{null cone} N = \{ v \in V : f(v) = 0 \:  \forall \: f \in \oplus_{n>0} R^G_n \}. \end{equation}
Following Mumford \cite{mumford}, Definition 1.7 we have:

\begin{defn} Let $v \in V$; then
\begin{enumerate}
\renewcommand{\labelenumi}{\roman{enumi})}
\item $v$ is $\rho$-{semistable} if there is an invariant section $\sigma \in H^0(V, {L}_\rho^{\otimes n})^G = k[V]^G_{\rho^n}$ for some $n > 0$ such that $\sigma(v) \neq 0$.
\item  $v $ is $\rho$-{stable} if $\dim G_x=0$ and there is an invariant section $\sigma \in H^0(V,{L}_\rho^{\otimes n})^G= k[V]^G_{\rho^n}$ for some $n > 0$ such that $\sigma(v) \neq 0$ and the action of $G$ on the open affine subset $V_\sigma:= \{ u \in V : \sigma(u) \neq 0 \}$ is closed (that is, all $G$-orbits in $V_\sigma$ are closed).
\item The points which are not $\rho$-semistable are called $\rho$-{unstable}.
\end{enumerate}
The open subsets of $\rho$-stable and $\rho$-semistable will be denoted by $V^{\rho \text{-s}}$ and $V^{\rho\text{-ss}}$ respectively.
\end{defn}

By definition, the semistable locus $V^{\rho \text{-ss}}$ is the complement of the null cone $N$ and we call the morphism $V^{\rho \text{-ss}} \ra V/\!/_\rho G$ the GIT quotient with respect to $\rho$. This approach of using a character to twist the trivial linearisation on an affine space was used by King to construct moduli spaces of representations of finite dimensional algebras \cite{king}. We note that King modifies the notion of stability so that one can have a subgroup of dimension greater than zero contained in the stabiliser of each point and still have stable points. In fact Mumford's original notion of stability \cite{mumford} does not require points to have zero dimensional stabilisers; the modern notion of stability (where one asks for zero dimensional stabilisers) is what Mumford refers to as proper stability.

\begin{thm}(Mumford)
The GIT quotient $\varphi : V^{\rho\text{-}\mathrm{ss}} \ra V/\!/_\rho G$ is a good quotient for the action of $G$ on $V^{\rho\text{-}\mathrm{ss}}$. Moreover, there is an open subset $V^{\rho\text{-}\mathrm{s}}/G \subset V/\!/_\rho G$ whose preimage under $\varphi$ is $V^{\rho\text{-}\mathrm{s}}$ and the restriction $\varphi : V^{\rho\text{-}\mathrm{s}} \ra V^{\rho\text{-}\mathrm{s}}/G$ is a geometric quotient (which in particular is an orbit space).
\end{thm}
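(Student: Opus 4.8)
This is a standard GIT result, so my strategy would be to reduce the statement for the $\rho$-linearised affine action to the projective GIT theory already developed by Mumford, by realising $V/\!/_\rho G = \proj R^G$ as a $\proj$ of a graded ring to which the existing machinery applies. Let me sketch the approach.

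=== PROOF PROPOSAL ===

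The plan is to deduce the theorem from the general theory of GIT quotients for $\proj$ of a graded ring, by identifying the $\rho$-semistable locus with the locus covered by the affine charts $V_\sigma$ for invariant semi-invariants $\sigma$ and checking that the quotient morphism glues to a good quotient.

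First I would observe that the semistable locus $V^{\rho\text{-ss}}$ is by definition the complement of the null cone $N$, and hence is covered by the $G$-invariant open affine subsets $V_\sigma = \{ u \in V : \sigma(u) \neq 0\}$ as $\sigma$ ranges over homogeneous elements of $\bigoplus_{n>0} R^G_n = \bigoplus_{n>0} k[V]^G_{\rho^n}$ of positive weight. On each such chart the restriction of the rational map \eqref{rat map} is an honest morphism $V_\sigma \ra (V/\!/_\rho G)_\sigma$, where the target is the affine open subset of $\proj R^G$ where $\sigma$ does not vanish. The key algebraic computation is to identify the coordinate ring of this target: the degree-zero part $(R^G[\sigma^{-1}])_0$ of the localisation is exactly the ring of $G$-invariant functions on $V_\sigma$, i.e. $k[V_\sigma]^G$. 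Since $G$ is reductive, $k[V_\sigma]^G \hookrightarrow k[V_\sigma]$ is the inclusion of invariants, so by the fundamental theorem of affine GIT (reductivity gives finite generation and the quotient morphism $\spec k[V_\sigma] \ra \spec k[V_\sigma]^G$ is a good quotient) the restriction $\varphi\colon V_\sigma \ra \spec k[V_\sigma]^G$ is a good quotient on each chart.

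Next I would glue. The charts $V_\sigma$ are $G$-invariant and their images $(V/\!/_\rho G)_\sigma$ form an affine open cover of $\proj R^G$ (restricted to the relevant locus), and on overlaps $V_\sigma \cap V_{\sigma'} = V_{\sigma\sigma'}$ the two locally defined good quotients agree because the good quotient of an affine variety by a reductive group is unique (it is a categorical quotient). Good quotients satisfy the sheaf-theoretic conditions locally on the base, so local good quotients that agree on overlaps glue to a global good quotient $\varphi\colon V^{\rho\text{-ss}} \ra V/\!/_\rho G$. This establishes the first assertion.

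For the statement about stable points, I would characterise $V^{\rho\text{-s}}$ as the set of $v$ lying in some chart $V_\sigma$ on which all $G$-orbits are closed and $\dim G_v = 0$. On such a chart the good quotient $V_\sigma \ra \spec k[V_\sigma]^G$ restricts to a geometric quotient on the open subset where orbits are closed and stabilisers are finite, since for reductive group actions the fibres of a good quotient are single orbits precisely when the orbits in question are closed, and the orbit space structure then follows. One checks that the $\rho$-stable locus is open and $G$-invariant, that its image $V^{\rho\text{-s}}/G$ is open in $V/\!/_\rho G$, and that $\varphi^{-1}(V^{\rho\text{-s}}/G) = V^{\rho\text{-s}}$; gluing the local geometric quotients as before yields the geometric quotient on $V^{\rho\text{-s}}$.

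The main obstacle I anticipate is not any single deep step but the bookkeeping around the degree-zero identification $(R^G[\sigma^{-1}])_0 \cong k[V_\sigma]^G$ and the verification that stability is genuinely an open condition matching the locus where $\varphi$ is a geometric quotient; in particular one must be careful that the affine charts $V_\sigma$ are saturated for the quotient map so that gluing is legitimate, and that openness of $V^{\rho\text{-s}}$ follows from semicontinuity of orbit dimension together with openness of the closed-orbit locus. Since all of this is exactly the affine-cone reformulation of Mumford's projective construction (\cite{mumford}, Chapter 1), I would present it as a direct application of that theory rather than reprove the foundational good-quotient results from scratch.
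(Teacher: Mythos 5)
The paper states this theorem without proof, citing Mumford; your sketch reproduces the standard argument from \cite{mumford} Chapter 1 (cover $V^{\rho\text{-ss}}$ by the saturated affine charts $V_\sigma$, identify $(R^G[\sigma^{-1}])_0$ with $k[V_\sigma]^G$, apply the affine good-quotient theorem for reductive groups on each chart, and glue), and it is correct in outline. The points you flag as needing care --- the degree-zero identification, saturation of the charts, and openness of the stable locus --- are exactly the right ones and all go through in the standard way, so this is a faithful proof of the quoted result rather than a divergence from the paper.
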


\begin{rmk} In general the GIT quotient $V/\!/_\rho G= \proj R^G$ with respect to $\rho$ is only quasi-projective. It is projective over the affine GIT quotient $\spec k[V]^G = \spec R^G_{0}$ and so is projective if $k[V]^G = k$. We note that if $\rho$ is the trivial character, then in this construction we recover the affine GIT quotient $V \ra V/\!/ G$.
\end{rmk}

\subsection{Criteria for stability}

We note that for finite groups, the notion of semistability is trivial for any character $\rho$ and so from now on we may as well assume our group is infinite. The following lemma gives a topological criterion for semistability. The proof follows in the same way as the original projective version (see \cite{mumford} Proposition 2.2 and also \cite{king} Lemma 2.2). We let $L_{\rho}^{-1} = L_{\rho^{-1}}$ denote the dual linearisation to $L_{\rho}$. 

\begin{lemma}\label{top crit}
Let $\tilde{v} = (v,a) \in L_{\rho}^{-1}$ be a point lying over $v \in V$ for which $a \neq 0$. Then
\begin{enumerate}
\renewcommand{\labelenumi}{\roman{enumi})}
\item $v$ is $\rho$-semistable if and only if the orbit closure $\overline{G \cdot \tilde{v}}$ of $\tilde{v}$ in $L_{\rho}^{-1}$ is disjoint from the zero section $V \times \{0\} \subset L_{\rho}^{-1} = V \times k$.
\item $v$ is $\rho$-stable if and only if the orbit $G \cdot \tilde{v}$ of $\tilde{v}$ in $L_{\rho}^{-1}$ is closed and $ \dim G \cdot \tilde{v}  = \dim G$.
\end{enumerate}
\end{lemma}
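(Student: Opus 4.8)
The plan is to transfer both conditions into statements about the $G$-action on the affine variety $L_\rho^{-1}=V\times k$ and then exploit reductivity. The key dictionary is the grading of the coordinate ring $k[L_\rho^{-1}]=k[V][a]=\bigoplus_{n\geq 0}k[V]\,a^n$ by degree in the fibre coordinate $a$: since $g\cdot(v,a)=(g\cdot v,\rho(g)^{-1}a)$, the function $f(v)a^n$ is $G$-invariant exactly when $f\in k[V]^G_{\rho^n}$, so that $k[L_\rho^{-1}]^G\cong\bigoplus_{n\geq 0}k[V]^G_{\rho^n}=R^G$, and the restriction of an invariant function to the zero section $Z:=V\times\{0\}$ is precisely its degree-zero component. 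Thus $\rho$-semistability of $v$ says exactly that some invariant function vanishing on $Z$ is nonzero at $\tilde v$.

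For part (i) I would argue both directions through this dictionary. If $v$ is $\rho$-semistable, pick $f\in k[V]^G_{\rho^n}$ with $n>0$ and $f(v)\neq 0$; then $\hat f:=f\,a^n$ is invariant, vanishes on $Z$, and satisfies $\hat f(\tilde v)=f(v)a^n\neq 0$ (using $a\neq 0$). As $\hat f$ is invariant and regular it is constant on $G\cdot\tilde v$ and hence on $\overline{G\cdot\tilde v}$, so this closure avoids $\{\hat f=0\}\supseteq Z$. Conversely, if $\overline{G\cdot\tilde v}$ and $Z$ are disjoint closed $G$-invariant subsets, reductivity of $G$ provides an invariant function separating them, i.e.\ some $F\in k[L_\rho^{-1}]^G$ with $F|_Z=0$ and $F(\tilde v)\neq 0$; its degree-zero part vanishes, so some positive-degree component $f_n a^n$ has $f_n(v)\neq 0$, exhibiting the required semi-invariant of weight $\rho^n$, $n>0$.

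For part (ii) I would first observe that every point of $G\cdot\tilde v$ has nonzero fibre coordinate $\rho(g)^{-1}a$, so the orbit is disjoint from $Z$ and its closure meets $Z$ if and only if the orbit fails to be closed; in particular the closed-orbit hypothesis already yields $\rho$-semistability via (i). I would then compute $G_{\tilde v}=G_v\cap\ker\rho$, so that $\dim G\cdot\tilde v=\dim G$ is equivalent to $\dim(G_v\cap\ker\rho)=0$. Using the trivial $\GG_m$-bundle $V_\sigma\times(k\setminus\{0\})\to V_\sigma$ obtained by scaling the fibre coordinate (an action commuting with $G$), closedness of orbits transfers between base and total space when the stabiliser is finite; this gives the forward implication, since a witnessing $\sigma=f$ with all orbits in $V_\sigma$ closed and $\dim G_v=0$ forces $\overline{G\cdot\tilde v}\subset V_\sigma\times(k\setminus\{0\})$, whence $G\cdot\tilde v$ is closed of full dimension.

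The reverse implication carries the real content. To recover $\dim G_v=0$ I would examine the $G$-equivariant map $G\cdot\tilde v\to G\cdot v$, whose fibre over $v$ is $\{v\}\times\rho(G_v)^{-1}a$: if $\dim G_v>0$ then $\rho(G_v)$ is either finite, forcing $\dim G\cdot\tilde v<\dim G$, or all of $\GG_m$, forcing $(v,0)\in\overline{G\cdot\tilde v}\cap Z$ and contradicting closedness; either way the hypotheses fail. The main obstacle is then producing the semi-invariant $\sigma$ whose nonvanishing locus consists entirely of closed orbits: here I would invoke that for a reductive action the properly stable locus (points with closed orbit and zero-dimensional stabiliser) is $G$-invariant, open and saturated for the quotient map, hence a union of basic open sets $V_\sigma$, and choose one such $V_\sigma$ containing $v$. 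This last step is exactly where the affine argument mirrors Mumford's Proposition 2.2, and the openness and saturatedness of the stable locus is the crucial input.
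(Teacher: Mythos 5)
Your argument is correct and is essentially the adaptation of Mumford's Proposition 2.2 and King's Lemma 2.2 that the paper itself invokes without further proof: the identification $k[L_\rho^{-1}]^G\cong\bigoplus_{n\geq 0}k[V]^G_{\rho^n}$ via the fibre coordinate, the separation of the disjoint closed invariant sets $\overline{G\cdot\tilde v}$ and $V\times\{0\}$ by an invariant function, and the stabiliser/orbit-dimension bookkeeping for part (ii) all match those sources. The one step you defer --- that the locus of semistable points with closed orbit and zero-dimensional stabiliser is open and saturated, hence a union of sets $V_\sigma$ on which the action is closed --- is exactly the nontrivial content of the cited Proposition 2.2 (Amplification 1.11) rather than an independent input, so a self-contained treatment would still have to separate $G\cdot\tilde v$ from the closed invariant locus of points of $L_\rho^{-1}$ with positive-dimensional stabiliser and verify directly that the resulting $V_\sigma$ has closed action (using that a semistable $u$ cannot have $\rho(G_u)=\GG_m$); since the paper offers only the same citation, your proposal matches its approach.
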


The topological criterion can be reformulated as a numerical criterion, known as the Hilbert--Mumford criterion, by using 1-parameter subgroups (1-PSs) of $G$; that is, non-trivial homomorphisms $\lambda : \GG_m \ra G$. As every semi-invariant for the action of $G$ is also a semi-invariant for the action of any subgroup of $G$, we see that if $v$ is $\rho$-(semi)stable for the action of $G$ it must also be $\rho$-(semi)stable for the action of $\lambda(\GG_m)$ for any 1-PS $\lambda$. We shall see that the notion of $\rho$-(semi)stability for a one-dimensional torus $\lambda(\GG_m)$ can easily be reformulated as a numerical condition. By the topological criterion for the one-dimensional torus $\lambda(\GG_m)$:
\begin{enumerate}
\renewcommand{\labelenumi}{\roman{enumi})}
\item $v$ is $\rho$-(semi)stable for the action of $\lambda(\GG_m)$ if and only if $\lim_{t \to 0} \lambda(t) \cdot \tilde{v} \notin V \times \{ 0 \}$ and $\lim_{t \to \infty} \lambda(t) \cdot \tilde{v} \notin V \times \{ 0 \}$.
\item $v$ is $\rho$-stable for the action of $\lambda(\GG_m)$ if and only if neither limit exits. 
\end{enumerate}

Let $(\rho, \lambda)$ denote the integer such that $\rho \circ \lambda(t) = t ^{(\rho, \lambda)}$; then
\[ \lambda(t) \cdot \tilde{v} = \left(\lambda(t) \cdot v, t^{-(\rho, \lambda)} a \right) \in L^{-1}_\rho. \]
The limit $\lim_{t \to 0} \lambda(t) \cdot \tilde{v}$ exists if and only if $\lim_{t \to 0} \lambda(t) \cdot v$ exists and $(\rho, \lambda) \leq 0$. In particular if $\lim_{t \to 0} \lambda(t) \cdot v$ exists, then $\lim_{t \to 0} \lambda(t) \cdot \tilde{v} \notin V \times \{ 0 \}$ if and only if $(\rho,\lambda) \geq 0$ (if it is positive the limit does not exist and if it is zero then the limit exists but does not belong to the zero section).

For $v \in V$ and a 1-PS $\lambda$ of $G$, we let $\mu^{\rho}(v,\lambda) :=(\rho, \lambda)$. We note that the quantity $\mu^{\rho}(v,\lambda)$ is independent of the point $v$, but we use this notation in analogy with the projective case.

\begin{prop}\label{HM crit}(Hilbert--Mumford criterion)
For $v \in V$ and a 1-PS $\lambda$ of $G$, we have
\begin{enumerate}
\renewcommand{\labelenumi}{\roman{enumi})}
\item v is $\rho$-semistable if and only if $\mu^{\rho}(v, \lambda)  \geq 0$  for every 1-PS $\lambda$ of $G$ for which the limit $\lim_{t \to 0} \lambda(t) \cdot v $ exists.
\item v is $\rho$-semistable if and only if $\mu^{\rho}(v, \lambda)  > 0$  for every 1-PS $\lambda$ of $G$ for which the limit $\lim_{t \to 0} \lambda(t) \cdot v $ exists.
\end{enumerate}
\end{prop}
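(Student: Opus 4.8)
\emph{Proof proposal.} The plan is to deduce both statements from the topological criterion (Lemma \ref{top crit}) together with the explicit description $\lambda(t)\cdot\tilde v = (\lambda(t)\cdot v,\, t^{-(\rho,\lambda)}a)$ of the action of a 1-PS on the lift. Throughout I would write $Z = V\times\{0\}\subset L_\rho^{-1}$ for the zero section; it is closed and $G$-invariant since $\rho(g)\cdot 0 = 0$, and the orbit $G\cdot\tilde v$ is disjoint from $Z$ because $\rho$ takes values in $\GG_m$ so the second coordinate of every point of the orbit is non-zero.

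For the forward implication of (i) I would use the observation already recorded above that $\rho$-semistability for $G$ forces $\rho$-semistability for each one-dimensional torus $\lambda(\GG_m)$, since a non-vanishing $G$-semi-invariant at $v$ restricts to a non-vanishing $\lambda(\GG_m)$-semi-invariant. Given a 1-PS $\lambda$ for which $\lim_{t\to 0}\lambda(t)\cdot v$ exists, the displayed formula shows that if $(\rho,\lambda)<0$ then $\lim_{t\to 0}\lambda(t)\cdot\tilde v$ exists and lies in $Z$, contradicting semistability for $\lambda(\GG_m)$ via Lemma \ref{top crit}(i). Hence $(\rho,\lambda)=\mu^\rho(v,\lambda)\geq 0$. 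This direction needs no input beyond the torus case.

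The substance is the converse of (i), which I would prove contrapositively: if $v$ is $\rho$-unstable I must produce a 1-PS $\lambda$ with $\lim_{t\to 0}\lambda(t)\cdot v$ existing and $\mu^\rho(v,\lambda)<0$. By Lemma \ref{top crit}(i) the orbit closure $\overline{G\cdot\tilde v}$ meets $Z$, so $\overline{G\cdot\tilde v}\cap Z$ is a non-empty closed $G$-invariant subset of the affine $G$-variety $\overline{G\cdot\tilde v}$. Since $G$ is reductive, such a subset contains a closed orbit, and by uniqueness of the closed orbit in an affine orbit closure this is the minimal orbit $G\cdot p$ of $\overline{G\cdot\tilde v}$, with $p\in Z$. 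The key input is then the affine Hilbert--Mumford lemma (the analogue for affine $G$-varieties of Mumford's Theorem 2.1, going back to Iwahori and established in this generality by Kempf and Birkes): there is a 1-PS $\lambda$ of $G$ with $\lim_{t\to 0}\lambda(t)\cdot\tilde v\in G\cdot p\subseteq Z$. Reading off the two coordinates, $\lim_{t\to 0}\lambda(t)\cdot v$ exists while $\lim_{t\to 0} t^{-(\rho,\lambda)}a=0$, which forces $(\rho,\lambda)<0$; this $\lambda$ violates the hypothesis, completing (i).

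Statement (ii), concerning the strict inequality and $\rho$-stability, follows by the same method using Lemma \ref{top crit}(ii). In the forward direction, a nontrivial $\lambda$ with $\lim_{t\to 0}\lambda(t)\cdot v$ existing and $(\rho,\lambda)\le 0$ makes $\lim_{t\to 0}\lambda(t)\cdot\tilde v$ exist: when $(\rho,\lambda)<0$ the limit lies in $Z$, hence off the orbit, so $G\cdot\tilde v$ is not closed; when $(\rho,\lambda)=0$ the limit is a point $(w,a)\in\overline{G\cdot\tilde v}$ with $a\neq 0$, which either lies off the orbit (again not closed) or lies on it, in which case a properness argument shows a nontrivial 1-PS cannot have a limit inside a closed orbit with finite stabiliser. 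In every case $v$ fails to be $\rho$-stable, so $(\rho,\lambda)>0$; the converse again invokes the affine Hilbert--Mumford lemma to supply a destabilising 1-PS. I expect the main obstacle to be precisely this affine Hilbert--Mumford lemma, which one must either cite in its affine form or reduce to the projective statement via Iwahori's theorem, together with careful bookkeeping of the borderline case $(\rho,\lambda)=0$ in (ii), where closedness of the orbit and finiteness of the stabiliser must be tracked separately.
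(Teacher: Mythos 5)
Your proposal is correct and follows essentially the same route as the paper: the forward direction via restriction to the one-dimensional tori $\lambda(\GG_m)$ and the explicit formula $\lambda(t)\cdot\tilde v=(\lambda(t)\cdot v,\,t^{-(\rho,\lambda)}a)$, and the converse via the affine Hilbert--Mumford/Kempf theorem (Theorem \ref{fund thm}), which is exactly the ingredient the paper cites. You merely spell out details the paper leaves implicit, such as the closed-orbit argument inside $\overline{G\cdot\tilde v}\cap Z$ and the borderline case $(\rho,\lambda)=0$ in part (ii).
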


The discussion proceeding this proposition proves the \lq only if' direction of the Hilbert--Mumford criterion and the converse follows by using the following well-known theorem from  GIT (the first version of this goes back to Hilbert; for a proof see \cite{kempf}, Theorem 1.4 or \cite{mumford} $\S$2):

\begin{thm}\label{fund thm}
Let $G$ be a reductive group acting linearly on an affine space $V$. If $v \in V$ and $w \in \overline{G \cdot v}$, then there is a 1-PS $\lambda$ of $G$ such that $\lim_{t \to 0} \lambda(t) \cdot v$ exists and is equal to $w$.
\end{thm}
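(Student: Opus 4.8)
The plan is to show that the purely topological fact $w \in \overline{G\cdot v}$ can always be witnessed by a one-parameter degeneration, and then to \emph{straighten} that degeneration into an honest 1-PS using the structure theory of $G$ over the field of formal Laurent series. The topological and valuative reductions are comparatively formal; the real content lies in replacing an arbitrary arc in $G$ by a cocharacter of a maximal torus.

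First I would reduce to producing a formal arc lying in the orbit. Since the action is algebraic, $G\cdot v$ is locally closed in $V$ and its boundary $\overline{G\cdot v}\setminus G\cdot v$ is a union of orbits of strictly smaller dimension. As $w\in\overline{G\cdot v}$, the valuative criterion (curve selection) applied to the dominant orbit map $G\ra\overline{G\cdot v}$, $g\mapsto g\cdot v$, produces a discrete valuation ring $R$ whose fraction field is a finite extension of $k((t))$, together with a map $\spec R\ra\overline{G\cdot v}$ sending the generic point into $G\cdot v$ and the closed point to $w$. Lifting the generic point through the orbit map and performing a ramified base change $t\mapsto t^N$ (permissible because $k$ is algebraically closed of characteristic zero, so its Laurent series field has the expected finite extensions), I obtain $g(t)\in G(k((t)))$ with $g(t)\cdot v\in V(k[[t]])$ and $\lim_{t\to 0} g(t)\cdot v = w$.

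Next I would straighten $g(t)$ by the Cartan (Iwahori) decomposition for the reductive group $G$ over the local field $K=k((t))$ with ring of integers $\cO=k[[t]]$: one can write $g(t)=a(t)\,\lambda(t)\,b(t)$, where $a(t),b(t)\in G(\cO)$ and $\lambda\in X_*(T)$ is a genuine cocharacter of a maximal torus $T$, hence a 1-PS of $G$ over $k$. Since $a(t),b(t)$ converge to $a_0:=a(0),\,b_0:=b(0)\in G(k)$ and the action is linear, writing $\lambda(t)b(t)\cdot v=a(t)^{-1}(g(t)\cdot v)\to a_0^{-1}w$ shows that this twisted limit exists; decomposing $V=\bigoplus_\chi V_\chi$ into $T$-weight spaces then forces the negative-weight components of $b_0\cdot v$ with respect to $\lambda$ to vanish, whence $\lim_{t\to 0}\lambda(t)\cdot\big(b_0\cdot v\big)$ exists. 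Transporting this cocharacter by $b_0$ and absorbing $a_0$ yields a 1-PS of $G$ based at $v$ whose limit exists, and the whole question is reduced to the combinatorics of the torus $\lambda(\GG_m)$ acting on the weight decomposition of the point $b_0\cdot v\in G\cdot v$.

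The main obstacle is the final matching of this limit with the prescribed point $w$ on the nose. The bounded factors $a_0,b_0$ are harmless, but the higher-order ($O(t)$) terms of $b(t)\cdot v$ can feed into the negative weight spaces of $\lambda$ and shift the limit away from the naive value $\big(b_0\cdot v\big)_0$. I expect to control this by exploiting that, by reductivity of $G$, the closure $\overline{G\cdot v}$ contains a \emph{unique} closed orbit $Z$, and reducing to the case $w\in Z$: a 1-PS limit is always a $\lambda$-fixed point lying in $\overline{G\cdot v}$, and once it is pinned to the closed orbit the weight-polytope description of the torus degeneration forces it to equal $w$. The serious work is therefore concentrated in the Cartan-decomposition step together with this weight-space analysis over the torus, which is precisely the point at which the general reductive problem collapses to the well-understood combinatorics of a maximal torus acting linearly.
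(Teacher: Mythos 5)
The paper itself does not prove Theorem \ref{fund thm}; it quotes it from Kempf (Theorem 1.4) and Mumford. Your skeleton --- curve selection to produce $g(t)\in G(k((t)))$ with $g(t)\cdot v$ specialising to $w$, then the Cartan/Iwahori decomposition $g(t)=a(t)\lambda(t)b(t)$ with $a,b\in G(k[[t]])$, then a $\lambda$-weight-space analysis showing the negative-weight components of $b_0\cdot v$ vanish --- is exactly the classical Iwahori--Mumford--Kempf argument behind the cited proofs, and everything up to the last step is correct, including your observation that the $O(t)$ tails of $b(t)\cdot v$ can feed into negative weight spaces and shift the limit away from $a_0^{-1}w$.

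The genuine gap is the one you flag and then do not close: matching the limit with $w$ on the nose. This cannot be done, because the statement as printed is too strong and is in fact false for general $w\in\overline{G\cdot v}$. For example, let $G=\GG_m$ act on $V=\AA^2$ by scaling, $v=(1,0)$ and $w=(5,0)\in\overline{G\cdot v}$: the only 1-PS limits of $v$ that exist are $\lim_{t\to 0}(t^n,0)=0$ for $n>0$, so $w$ is never attained. What Kempf's Theorem 1.4 actually asserts is that for every closed $G$-invariant subset $S$ meeting $\overline{G\cdot v}$ there is a 1-PS $\lambda$ with $\lim_{t\to 0}\lambda(t)\cdot v$ existing and lying in $S$ (equivalently, one can always reach the unique closed orbit in $\overline{G\cdot v}$); that weaker form is all this paper ever uses (with $S$ the zero section of $L^{-1}_\rho$ for Lemma \ref{top crit}, and with $S$ a closed orbit in Proposition \ref{prop B}). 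Your proposed repair --- reduce to $w$ in the closed orbit $Z$ and claim the ``weight-polytope description forces'' the limit to equal $w$ --- does not hold up: the weight analysis only pins the limit to a $\lambda$-fixed point of $\overline{G\cdot v}$ lying in $S$, the $\lambda$-fixed locus of $Z$ may be positive-dimensional, and even when the limit lands in $G\cdot w$, conjugating $\lambda$ to move it onto $w$ itself changes the base point from $v$ to $g\cdot v$. The correct way to finish your argument is to fix a closed $G$-invariant $S$ containing $w$, use closedness and $G$-invariance of $S$ to show that $\lim_{t\to 0}\lambda(t)\cdot(b_0\cdot v)$ lies in $S$, and make no claim that it equals $w$.
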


If we fix a maximal torus $T \subset G$, then the action of $T$ on $V$ gives a weight decomposition
\begin{equation}\label{wt decomp}
V = \bigoplus_{\chi \in \chi^*(T)} V_\chi 
\end{equation} 
where $ \chi^*(T):=\text{Hom}(T,\GG_m)$ and $V_\chi := \{v \in V : t \cdot v = \chi(t)v \: \forall \: t \in T \}$. We refer to the finite set of characters $\chi$ for which $V_\chi \neq 0$ as the $T$-weights for the action. For any $v \in V$, we write $v = \sum v_\chi$ with respect to this decomposition and define $\text{wt}_T(v) = \{ \chi : v_\chi \neq 0 \}$. We let $\chi_*(T) := \text{Hom}(\GG_m,T)$ denote the set of cocharacters. If $T \cong (\GG_m)^n$ is an $n$-dimensional torus, then there are natural identifications \[\chi_*(T) \cong \ZZ^n \cong \chi^*(T)\]
\[ \lambda(t) = (t^{m_1}, \dots ,t^{m_n}) \mapsto (m_1, \dots , m_n) \mapsto \chi(t_1, \dots, t_n) = \Pi t_i^{m_i} \] 
and the natural pairing between characters and cocharacters corresponds to the dot product on $\ZZ^n$. We define the cone of allowable 1-PSs for $v$ to be
\begin{equation}\label{cone allow} C_v := \bigcap_{\chi \in \text{wt}_T(v)} H_\chi \subset \RR^n\end{equation}
where $ H_\chi := \{ \lambda \in \chi_*(T)_\RR\cong \RR^n : (\chi, \lambda) \geq 0 \}$. Then, by construction of this cone, a 1-PS $\lambda$ belongs to $C_v$ if and only if \[\lim_{t \to 0} \lambda(t) \cdot v = \sum_\chi \lim_{t \to 0} t^{(\chi,\lambda)} v_\chi\]
exists. Then the Hilbert--Mumford criterion can be restated as:
\begin{enumerate}
\renewcommand{\labelenumi}{\roman{enumi})}
\item  $v$ is $\rho$-semistable for the action of $T$ if and only if $C_v \subset H_\rho$. 
\item $v$ is $\rho$-stable for the action of $T$ if and only if $C_v - \{ 0 \} \subset H_\rho^o$ where \[ H_\rho^o:= \{ \lambda \in  \chi_*(T)_\RR\cong \RR^n  :  ( \rho, \lambda) >0 \}.\]
\end{enumerate}
Moreover, as every 1-PS of $G$ is conjugate to a 1-PS of $T$, we can use the above criteria to give criteria for $\rho$-(semi)stability with respect to $G$.

\begin{prop}
Let $v \in V$; then
\begin{enumerate}
\renewcommand{\labelenumi}{\roman{enumi})}
\item $v$ is $\rho$-semistable if and only if for all $g \in G$ we have $C_{gv} \subset H_\rho$.
\item $v$ is $\rho$-stable if and only if for all $g \in G$ we have $C_{gv} - \{ 0 \} \subset H_\rho^o$.
\end{enumerate}
\end{prop}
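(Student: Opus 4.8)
The plan is to reduce both parts to the corresponding criteria for the maximal torus $T$ (the two displayed criteria immediately preceding the proposition) by combining the Hilbert--Mumford criterion of Proposition \ref{HM crit} with the fact, just recalled, that every 1-PS of $G$ is conjugate to a 1-PS of $T$. The only computation needed is to understand how conjugating a 1-PS into $T$ affects the two pieces of data entering that criterion: the pairing $(\rho, \cdot)$ and the existence of the limit $\lim_{t \to 0} \lambda(t) \cdot v$.

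First I would record two conjugation invariances. Given a 1-PS $\lambda$ of $G$, write $\lambda = g \lambda' g^{-1}$ with $g \in G$ and $\lambda' \in \chi_*(T)$. Since $\rho$ is a homomorphism into the abelian group $\GG_m$, we have $\rho(\lambda(t)) = \rho(g) \rho(\lambda'(t)) \rho(g)^{-1} = \rho(\lambda'(t))$, so that $\mu^{\rho}(v, \lambda) = (\rho, \lambda) = (\rho, \lambda')$. Moreover $\lambda(t) \cdot v = g \cdot \left( \lambda'(t) \cdot (g^{-1} v) \right)$ and $g$ acts on $V$ as a linear isomorphism, so $\lim_{t \to 0} \lambda(t) \cdot v$ exists if and only if $\lim_{t \to 0} \lambda'(t) \cdot (g^{-1} v)$ exists; by the defining property of the cone in \eqref{cone allow} this occurs precisely when $\lambda' \in C_{g^{-1} v}$.

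Now for part (i) I would chain equivalences. By Proposition \ref{HM crit}(i), $v$ is $\rho$-semistable if and only if $(\rho, \lambda) \geq 0$ for every 1-PS $\lambda$ of $G$ whose $t \to 0$ limit on $v$ exists. Parametrising such $\lambda$ as $g \lambda' g^{-1}$ and substituting the two invariances, this says that for every $g \in G$ and every 1-PS $\lambda'$ of $T$ lying in $C_{g^{-1} v}$ one has $(\rho, \lambda') \geq 0$; by the restated $T$-semistability criterion applied to the point $g^{-1} v$, this is equivalent to $C_{g^{-1} v} \subset H_\rho$ for every $g \in G$. Since $g \mapsto g^{-1}$ is a bijection of $G$, this is exactly the assertion that $C_{gv} \subset H_\rho$ for all $g \in G$, which is (i). Part (ii) runs identically, using Proposition \ref{HM crit}(ii) and the restated $T$-stability criterion, with $\geq$ replaced by $>$ and $H_\rho$ by $H_\rho^o$, and with 1-PSs being nontrivial, to arrive at $C_{gv} - \{0\} \subset H_\rho^o$ for all $g$.

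I do not expect a genuine obstacle, as the argument is essentially a bookkeeping reduction from $G$ to $T$; the step requiring the most care is aligning the quantifiers with the inversion $g \mapsto g^{-1}$. The one point of real subtlety is in part (ii): the strict numerical condition of Proposition \ref{HM crit}(ii) must be matched against the containment of real cones $C_{gv} - \{0\} \subset H_\rho^o$, and I would pass between the integral condition on 1-PSs and this real-cone statement precisely by invoking the restated $T$-stability criterion, which already performs this translation (and, through it, encodes the zero-dimensional stabiliser requirement in the definition of $\rho$-stability). As a sanity check, a 1-PS fixing $g^{-1} v$ would place both $\lambda'$ and $-\lambda'$ in $C_{g^{-1} v}$, so that $(\rho, \lambda')$ and $(\rho, -\lambda')$ could not both be positive --- exactly the failure of the strict condition that the $T$-criterion predicts.
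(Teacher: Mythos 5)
Your argument is correct and is exactly the reduction the paper intends: the proposition is stated without proof, immediately after the remark that every 1-PS of $G$ is conjugate to a 1-PS of $T$, and your two conjugation invariances (of $(\rho,\lambda)$ and of the existence of $\lim_{t\to 0}\lambda(t)\cdot v$ after replacing $v$ by $g^{-1}v$) together with the substitution $g\mapsto g^{-1}$ are precisely the bookkeeping needed to pass from the torus criterion to the one for $G$. Your closing remark about matching the strict integral condition against the real-cone containment $C_{gv}-\{0\}\subset H_\rho^o$ is a fair observation, and delegating it to the restated $T$-criterion is consistent with how the paper itself handles it.
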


\begin{rmk} If $C_{gv} = \{0\}$ for all $g \in G$, then $G \cdot v$ is $\rho$-stable for any character $\rho$. We shall refer to such points as strongly stable points. We note that if $v = 0$, then $C_ 0 := \RR^n$ and so $0$ is unstable for any non-trivial character $\rho$. We also note that if $\rho = 0$ is the trivial character, then $H_\rho = \RR^n$ and $H^o_\rho = \emptyset$, thus all points are semistable and the strongly stable points are the only stable points.
\end{rmk}

In the projective setting, the set of torus weights can also be used to study (semi)stability of points (for example, see \cite{dolgachev} Theorem 9.3); although a weight polytope is used (which is equal to the convex hull of some torus weights) rather than a cone of allowable 1-PSs.

\subsection{Instability}

To study the null cone $N$ of unstable points, Kempf gave a notion for a 1-PS to be adapted to an unstable point which makes use of a normalised Hilbert--Mumford function $\mu^{\rho}(v,\lambda)/|| \lambda ||$ where $|| - ||$ is a fixed norm on the set $\chi_*(G)/G$ of conjugacy classes of 1-PSs. As the set of orbits for the action of $G$ on $G$ (by conjugation) is equal to the set of orbits for the action of the Weyl group $W$ on a maximal torus $T$ of $G$, to fix such a norm is equivalent to fixing a maximal torus $T$ of $G$ and a norm on $\chi_*(T)/W$. Under the natural identification $\chi_*(T) \cong \ZZ^n$, we can take the standard Euclidean norm on $\RR^n$ and average over the action of the (finite) Weyl group $W$ to produce a norm which is invariant under the action of $W$. 

\begin{ass}\label{int ass} We assume that $||\lambda||^2 \in \ZZ$ for all 1-PSs $\lambda$.
 \end{ass}

\begin{ex}
If $G = \GL(n)$ and $T$ is the maximal torus of diagonal matrices, then the dot product on $\ZZ^n \cong \chi_*(T)$ is invariant under the Weyl group $W = S_n$ which acts on $T$ by permuting the diagonal entries.
\end{ex}

\begin{rmk}
Often we shall work over the complex numbers, in which case a complex reductive group $G$ is equal to the complexification of its maximal compact subgroup $K \subset G$. As any 1-PS of $G$ is conjugate to a 1-PS which sends the maximal compact subgroup $\U(1) \subset \CC^*$ to $K \subset G$, we can just consider 1-PSs of this form. There is a natural isomorphism $\chi_*(K)_{\RR} = \text{Hom}(U(1),K) \otimes_\ZZ {\RR} \cong \mathfrak{K}:= \mathrm{Lie} K$ given by
\[ \lambda \mapsto d\lambda(2 \pi i) :=\frac{d}{dt} \lambda(\exp(2\pi i t))|_{t=0} \in \mathfrak{K} \]
whose inverse is given by sending $\alpha \in \mathfrak{K}$ to the real 1-PS $\exp(\RR\alpha) \subset K$ (although we note that the map $\lambda : S^1 \ra K$ given by $\lambda(\exp(2 \pi it)) = \exp( t \alpha)$ is not necessarily a group homomorphism). We refer to points $\alpha \in \mathfrak{K}$ which lie in the image of $\chi_*(K)$ under this isomorphism as integral weights (in this case, the corresponding $\lambda$ is a group homomorphism). A $G$-invariant norm on the set of 1-PSs of $G$ is then equivalent to a $K$-invariant norm on $\mathfrak{K}$ and Assumption \ref{int ass} is equivalent to requiring that $||\alpha||^2 \in \ZZ$ for all integral weights $\alpha$. For example, any positive scalar multiple of the Killing form on $\mathfrak{K}$ is a $K$-invariant inner product and a suitably scaled version of the Killing form would satisfy Assumption \ref{int ass}.
\end{rmk}

\begin{defn}\label{def adapted}
Let $v$ be $\rho$-unstable for the action of $G$ and define
\[ M^\rho_G(v) = \inf \frac{\mu^{\rho}(v,\lambda)}{|| \lambda ||} \]
where the infimum is taken over all 1-PSs $\lambda$ of $G$ for which $\lim_{t \to 0} \lambda(t) \cdot v$ exist. We say that a subgroup $H \subset G$ is optimal for $v$, if $M^{\rho}_G(v) = M^{\rho}_H(v)$. We shall often write $M^\rho(v)$ rather than $M^\rho_G(v)$ when it is clear that we are considering the action of a fixed group $G$. A 1-PS $\lambda$ is said to be $\rho$-adapted to $v$ if it achieves this minimum value; that is,
\[ M^\rho_G(v) = \frac{\mu^{\rho}(v,\lambda)}{|| \lambda ||} .\]
We let $\wedge^\rho(v) $ denote the set of indivisible 1-PSs which are $\rho$-adapted to $x$.
\end{defn}

For torus actions, we can combinatorially determine $\wedge^\rho(v) $ from the cone $C_v$ defined at (\ref{cone allow}):

\begin{lemma}\label{determine wts Hess}
Let $v$ be an unstable point for the action of a torus $T \cong \GG_m^n$ on a vector space $V$ linearised by a non-trivial character $\rho$; then $\wedge^\rho(v) $ consists of a unique indivisible 1-PS.
\end{lemma}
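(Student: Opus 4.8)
The plan is to translate the definition of $\wedge^\rho(v)$ into a problem of convex geometry on the cocharacter space $\chi_*(T)_\RR \cong \RR^n$ and to solve it by a nearest-point projection argument. By Definition \ref{def adapted}, the $\rho$-adapted 1-PSs are exactly the integral points of $C_v$ that minimise the scale-invariant ratio $\mu^{\rho}(v,\lambda)/\|\lambda\| = (\rho,\lambda)/\|\lambda\|$, where $C_v \subset \RR^n$ is the closed rational polyhedral cone of allowable 1-PSs defined at (\ref{cone allow}). Since the inner product is fixed, I would first use it to identify $\rho \in \chi^*(T)$ with a vector $\hat\rho \in \chi_*(T)_\RR$ so that $(\rho,\lambda) = \langle \hat\rho, \lambda\rangle$ for all $\lambda$; Assumption \ref{int ass} together with polarisation makes the Gram matrix of the inner product rational, so in fact $\hat\rho \in \QQ^n$. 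The problem then becomes: minimise $\langle \hat\rho, \lambda\rangle/\|\lambda\|$ over $\lambda \in C_v\setminus\{0\}$, that is, find the direction in $C_v$ making the largest angle with $\hat\rho$.

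Next I would let $p$ be the metric projection of $-\hat\rho$ onto the closed convex cone $C_v$, which exists and is unique. The standard variational characterisation of projection onto a cone gives $\langle -\hat\rho - p,\, p\rangle = 0$ and $\langle -\hat\rho - p,\, c\rangle \le 0$ for all $c \in C_v$. From the first identity $\langle \hat\rho, p\rangle = -\|p\|^2$, and combining the second with Cauchy--Schwarz yields $\langle \hat\rho, \lambda\rangle \ge -\|p\|^2$ whenever $\|\lambda\| = \|p\|$; hence the ratio is minimised along the ray $\RR_{\ge 0}\,p$, with minimum value $-\|p\|$. To see that $p \neq 0$ I would invoke instability of $v$: by the torus Hilbert--Mumford criterion this means $C_v \not\subset H_\rho$, so some $\lambda \in C_v$ has $\langle \hat\rho, \lambda\rangle < 0$, i.e. $-\hat\rho$ does not lie in the polar cone of $C_v$. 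Since the projection of a point onto a closed convex cone vanishes exactly when the point lies in the polar cone, this gives $p \neq 0$.

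The uniqueness of the minimising ray is the crux, and it falls out of the equality case of the estimate above: if $\lambda \in C_v$ with $\|\lambda\| = \|p\|$ also attains $\langle \hat\rho, \lambda\rangle = -\|p\|^2$, then both the variational inequality and the Cauchy--Schwarz inequality must hold with equality, and positive definiteness of the inner product forces $\lambda = p$. Thus the minimum is attained on a single ray. Finally I would record that $p$ is rational: it lies in the relative interior of some face $F$ of the rational cone $C_v$ and coincides with the orthogonal projection of $-\hat\rho$ onto the rational linear span of $F$, so $p \in \QQ^n$. The ray $\RR_{\ge 0}\,p$ therefore contains a unique primitive lattice vector, which is the unique indivisible 1-PS in $\wedge^\rho(v)$.

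The main obstacle I anticipate is coupling the uniqueness step with the rationality of $p$: one must ensure not merely that a real optimal direction exists and is unique, but that it is rational, so that a genuine indivisible \emph{integral} 1-PS actually attains the infimum $M^\rho_T(v)$ and no irrational near-minimiser intervenes. The projection-onto-a-face description is precisely what guarantees this, and it is the part of the argument I would take most care over.
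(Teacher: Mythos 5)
Your proof is correct, and at bottom it is the same argument as the paper's: both reduce the problem to finding the unique extremal ray of the rational polyhedral cone $C_v\subset\chi_*(T)_\RR$ relative to $\rho$ and then observing that this ray is rational, hence contains a unique indivisible 1-PS. The difference is in execution. The paper phrases the extremality as maximising the angle $\theta_{\lambda,\rho}$ over $C_v$, asserts uniqueness of the maximising ray, and disposes of rationality by the case analysis ``either $R=\RR_{\ge 0}(-\rho)$ or $R$ lies in the boundary of the cone, hence is spanned by an integral point'' --- a step that, as written, is not a complete argument (a ray lying in a wall $\chi^\perp$ need not be rational without further justification). You instead realise the optimal ray as $\RR_{\ge 0}\,p$ with $p$ the metric projection of $-\hat\rho$ onto $C_v$; the variational inequality plus the equality case of Cauchy--Schwarz gives uniqueness of the ray cleanly, and identifying $p$ with the orthogonal projection of $-\hat\rho$ onto the rational span of the face of $C_v$ whose relative interior contains it (using that Assumption \ref{int ass} forces a rational Gram matrix) gives rationality honestly. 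So your route buys a rigorous treatment of exactly the two points --- uniqueness of the minimising ray and attainment of the infimum at an \emph{integral} cocharacter --- that the paper's proof passes over quickly, at the cost of slightly more convex-analysis machinery.
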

\begin{proof}
As $v$ is $\rho$-unstable, the cone of allowable 1-PSs $C_v $ defined at (\ref{cone allow}) is not contained in the half space $H_\rho$. By definition
\[ M^\rho(v) = \inf \frac{(\rho,\lambda)}{||\lambda||} \]
where the infimum is taken over 1-PSs $\lambda \in C_v \cap \ZZ^n - \{ 0 \}$. As $(\rho,\lambda) = ||\lambda || \: || \rho || \cos \theta_{\lambda,\rho}$ where $\theta_{\lambda,\rho}$ is the angle between the vectors $\lambda$ and $\rho$, this is equivalent to taking the supremum over $\lambda \in C_v \cap \ZZ^n - \{ 0 \}$ of the angle between $\rho$ and $\lambda$. Hence, there is a unique ray $R$ contained in the cone $C_v$ along which the normalised Hilbert--Mumford function is minimised. Since $R$ is the ray consisting of vectors whose angle to $\rho$ is maximal over all vectors contained in the cone $C_v$, either $R$ is the ray spanned by $-\rho$ or $R$ is a ray contained in the boundary of the cone (that is; it is contained in $\chi^{\perp} := \{ \alpha \in \RR^n : (\alpha,\rho) = 0 \}$ for some $\chi \in \text{wt}_T(v)$). In either case, $R$ is the span of an integral point in $\RR^n \cong \chi_*(T)_\RR$ and so contains a unique point which corresponds to a indivisible 1-PS.
\end{proof}

As any two maximal tori of a reductive group $G$ are conjugate and $\mu^\rho(v,\lambda) = \mu^\rho(g \cdot v, g \lambda g^{-1})$, we see that one can calculate the possible values of $M^\rho$ by fixing a maximal torus and calculating \[ \inf_{\lambda \neq 0 \in C} \frac{(p,\lambda)}{||\lambda||} \]
for all cones $C$ which are constructed using some subset of the $T$-weights as at (\ref{cone allow}). In particular, we see that for any reductive group $M^\rho : V \ra \RR$ takes only finitely many values and $\wedge^\rho(v) \neq \emptyset$ for any $\rho$-unstable point.

\begin{defn}
For any 1-PS $\lambda$ of $G$ we define a parabolic subgroup of $G$:
\[ P(\lambda) := \left\{ g \in G : \lim_{t \rightarrow 0} \lambda(t) g \lambda(t^{-1}) \mathrm{\:exists \: in \: } G \right\}. \]
\end{defn}

One can easily modify the results of Kempf \cite{kempf} on the sets $\wedge(v)$ to our setting of a reductive group action $G$ on an affine space $V$ linearised by a character $\rho$:

\begin{thm}\label{kempf thm}(Kempf)
Let $G$ be a reductive group acting on an affine $V$ and suppose $\rho$ is a character which we use to linearise this action. If $v \in V$ is $\rho$-unstable, then
\begin{enumerate}
\renewcommand{\labelenumi}{\roman{enumi})}
\item $\wedge^\rho(g \cdot v) = g \wedge^\rho(v) g^{-1}$ for all $g \in G$.
\item There is a parabolic subgroup $P(v,\rho)$ such that $P(v,\rho) = P(\lambda)$ for all $\lambda \in \wedge^\rho(v)$.
\item All elements of $\wedge^\rho(v)$ are conjugate to each other by elements of $P(v,\rho)$. Moreover, the stabiliser subgroup $G_v$ is contained in $P(v,\rho)$.
\item Let $T \subset P(v,\rho)$ be a maximal torus of $G$, then there is a unique 1-PS of $T$ which belongs to $\wedge^\rho(v)$.
\item If $\lambda \in \wedge^\rho(v)$ and $w = \lim_{t \to 0} \lambda(t) \cdot v$, then also $\lambda \in \wedge^\rho(w)$.
\end{enumerate}
\end{thm}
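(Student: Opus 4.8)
The plan is to transcribe Kempf's proof \cite{kempf} of the corresponding projective statement, the only genuine change being that here the numerical function $\mu^\rho(v,\lambda)=(\rho,\lambda)$ does not depend on the point $v$: all the dependence on $v$ is instead carried by the constraint that $\lim_{t\to 0}\lambda(t)\cdot v$ exist. That we are really in a Kempf-type instability situation is guaranteed by Lemma \ref{top crit} and Theorem \ref{fund thm}, since $v$ is $\rho$-unstable exactly when $\overline{G\cdot\tilde v}$ meets the zero section of $L_\rho^{-1}$, and such a degeneration is detected by a 1-PS. Part (i) is essentially formal: the norm is defined on conjugacy classes, so $||g\lambda g^{-1}||=||\lambda||$; moreover $\mu^\rho(g\cdot v,g\lambda g^{-1})=(\rho,g\lambda g^{-1})=(\rho,\lambda)=\mu^\rho(v,\lambda)$, and $\lim_{t\to 0}(g\lambda g^{-1})(t)\cdot(g\cdot v)$ exists if and only if $\lim_{t\to 0}\lambda(t)\cdot v$ does. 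Thus conjugation by $g$ matches the optimisation problem defining $M^\rho_G(v)$ with the one defining $M^\rho_G(g\cdot v)$, giving $M^\rho_G(g\cdot v)=M^\rho_G(v)$ and $\wedge^\rho(g\cdot v)=g\wedge^\rho(v)g^{-1}$.

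For existence I would reduce to a maximal torus. Since every 1-PS of $G$ lies in some maximal torus and all such are conjugate, by (i) it suffices to work on a fixed $T$, where $M^\rho_T(v)$ is attained at a unique indivisible 1-PS by Lemma \ref{determine wts Hess}, obtained by minimising $(\rho,\lambda)/||\lambda||$ over the rational polyhedral cone $C_v$ of (\ref{cone allow}). Running over the finitely many cones cut out by subsets of the $T$-weights shows $M^\rho_G$ takes only finitely many values and is attained, so $\wedge^\rho(v)\neq\emptyset$. I would defer (iv): once (ii)--(iii) give that $\wedge^\rho(v)$ is a single $P(v,\rho)$-conjugacy class, a maximal torus $T\subset P(v,\rho)$ meets this class in exactly one point, which is forced to be the unique indivisible minimiser on $T$ provided by Lemma \ref{determine wts Hess} (here $M^\rho_T(v)=M^\rho_G(v)$ because a global minimiser may be taken inside $T$).

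The substantive content is the parabolic statement (ii) together with the $P(v,\rho)$-conjugacy in (iii), and this is where I expect the real difficulty. Following Kempf, to an adapted $\lambda\in\wedge^\rho(v)$ I would attach the parabolic $P(\lambda)$ and prove it independent of the choice of adapted $\lambda$; the engine is a strict convexity argument for the normalised function $\mu^\rho(v,-)/||-||$, exploiting that $(\rho,-)$ is linear while $||-||^2$ is strictly convex. Given two adapted 1-PSs $\lambda_1,\lambda_2$, one finds representatives in a common parabolic and compares them along the path joining them: distinct minimisers would yield a strictly better allowable 1-PS for $v$, contradicting optimality, unless $\lambda_1$ and $\lambda_2$ are $P(\lambda)$-conjugate. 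This simultaneously pins down $P(v,\rho):=P(\lambda)$ and exhibits $\wedge^\rho(v)$ as a single $P(v,\rho)$-conjugacy class. The main obstacle is performing this comparison for 1-PSs arising from different maximal tori while respecting the allowability cones $C_v$; rather than re-derive the needed geometry I would invoke Kempf's analysis of the spherical building directly.

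The stabiliser containment in (iii) and part (v) then follow from uniqueness. For $g\in G_v$ we have $g\cdot v=v$, so by (i) the element $g$ permutes $\wedge^\rho(v)$; as this is a single $P(v,\rho)$-conjugacy class with associated parabolic $P(v,\rho)$, conjugation by $g$ preserves $P(v,\rho)$, and since parabolics are self-normalising this gives $g\in P(v,\rho)$, hence $G_v\subset P(v,\rho)$. For (v), put $w=\lim_{t\to 0}\lambda(t)\cdot v$; then $w$ is fixed by $\lambda(\GG_m)$, so $\lambda$ is allowable for $w$ and $\mu^\rho(w,\lambda)=(\rho,\lambda)=\mu^\rho(v,\lambda)$, giving $M^\rho_G(w)\le M^\rho_G(v)$. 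The reverse inequality is the crux: the allowable cone for $w$ is the enlargement $C_w\supseteq C_v$ obtained by discarding the constraints coming from those $T$-weights $\chi$ with $(\chi,\lambda)>0$ (only the $\chi$ orthogonal to $\lambda$ survive in the limit $w$), and one must rule out that this enlargement admits a strictly better destabiliser. Following Kempf, I would use that $w$ is $\lambda$-fixed to place an optimal 1-PS for $w$ inside $P(v,\rho)$ and conclude $M^\rho_G(w)=M^\rho_G(v)$, whence $\lambda\in\wedge^\rho(w)$.
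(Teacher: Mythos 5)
The paper offers no proof of this theorem at all: it simply asserts that ``one can easily modify the results of Kempf'' to the affine, character-linearised setting, which is exactly the strategy you adopt. Your sketch is correct and, if anything, more explicit than the paper --- it isolates the one genuine change (that $\mu^\rho(v,\lambda)=(\rho,\lambda)$ is independent of $v$, with all $v$-dependence pushed into the allowability constraint that $\lim_{t\to 0}\lambda(t)\cdot v$ exist), derives (i), (iv), the stabiliser containment and the easy half of (v) correctly from this, and defers the genuinely hard steps (the convexity argument on the building for (ii)--(iii) and the reverse inequality $M^\rho(w)\ge M^\rho(v)$ in (v)) to Kempf, precisely as the paper does.
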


\subsection{The stratification of Hesselink}\label{sec hess}

When a reductive group $G$ acts linearly on a projective space $\PP^n$, there is a stratification of the unstable locus $\PP^n - (\PP^n)^{ss} = \sqcup S_{[\lambda],d} $ associated to this action and any norm as above which is indexed by pairs $([\lambda],d)$ where $[\lambda]$ denotes the conjugacy class of a 1-PS of $G$ and $d$ is a positive integer (see Hesselink \cite{hesselink} and also Kirwan \cite{kirwan} $\S$12). In this projective case, the strata are defined as
\[ S_{[\lambda],d} := \{ x: M(x) =- d \text{ and } \wedge\!(v) \cap [\lambda] \neq \emptyset  \}\]
where $M(x):= \inf \mu(x,\lambda')/ || \lambda' ||$. We can easily modify these ideas to our setting of a reductive group $G$ acting on an affine space $V$ with respect to a character $\rho$ to obtain a stratification of the null cone $N = V - V^{\rho-ss}$. We fix a norm on the set of conjugacy classes of 1-PSs of $G$ which we assume comes from a conjugation invariant inner product on the set of 1-PSs of $G$. In the affine setting, the index $d$ is redundant as it is determined by $\lambda$ and $\rho$; that is,
\[ d= - M^\rho(v) = - \frac{ \mu^\rho(v,\lambda)}{ ||\lambda||} = - \frac{ (\rho,\lambda)}{||\lambda||} >0 \]
for $\lambda \in \wedge^\rho(v)$. We define
\[ S_{[\lambda]} := \{ v \in N : \wedge^\rho(v) \cap [\lambda] \neq \emptyset \}; \]
then these subsets stratify the unstable locus $N:=V -V^{\rho-ss}$ into $G$-invariant subsets (for details on what we mean by a stratification see Theorem \ref{hess thm} below). The strata $S_{[\lambda]}$ may also be described in terms of the \lq blades'
\begin{equation}\label{Slambda}
  S_{\lambda}= \{ v \in N :  \lambda \in \wedge^\rho(v)\} \subset S_{[\lambda]} 
\end{equation}
and the limit sets
\begin{equation} \label{Zlambda}
  Z_{\lambda} = \{ v \in N :  \lambda \in \wedge^\rho(v) \cap G_v\} \subset S_{\lambda} 
\end{equation}
where $G_v$ denotes the stabiliser subgroup of $v$ and $\lambda$ is a representative of the conjugacy class $[\lambda]$. It follows from Theorem \ref{kempf thm} that $S_{[\lambda]} = GS_\lambda$. We let $p_{\lambda} : S_{\lambda} \ra Z_{\lambda} $ be the retraction given by sending a point $v$ to $\lim_{t \to 0} \lambda(t) \cdot v$. There is a strict partial ordering $<$ on the indices given by $[\lambda] < [\lambda']$ if 
\[ \frac{(\rho,\lambda)}{||\lambda||} > \frac{(\rho,\lambda')}{||\lambda'||}; \]
that is, $M^\rho(v) > M^\rho(v')$ for $v \in S_{[\lambda]}$ and $v' \in S_{[\lambda']}$.

\begin{thm}\label{hess thm}
Given a reductive group $G$ acting on an affine space $V$ with respect to a character $\rho$, there is a decomposition of the null cone $N:=V -V^{\rho\text{-}\mathrm{ss}}$
\[ N = \bigsqcup_{[\lambda]} S_{[\lambda]} \]
into finitely many disjoint $G$-invariant locally closed subvarieties $S_{[\lambda]}$ of $V$. Moreover, the strict partial ordering describes the boundary of a given stratum:
\[ \partial S_{[\lambda]} \cap S_{[\lambda']} \neq \emptyset\]
only if $[\lambda] < [\lambda']$. (We refer to such a decomposition with such an ordering $<$ as a stratification). 
\end{thm}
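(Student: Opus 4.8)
The plan is to establish the four assertions in turn---finiteness of the index set, that the $S_{[\lambda]}$ give a disjoint $G$-invariant covering of $N$, local closedness of each stratum, and the boundary/ordering statement---leaning throughout on Kempf's Theorem~\ref{kempf thm}. The first three of these are quick. By Theorem~\ref{kempf thm}(iii) the whole of $\wedge^\rho(v)$ lies in a single conjugacy class, so each $\rho$-unstable $v$ lies in exactly one $S_{[\lambda]}$; since every unstable point has $\wedge^\rho(v)\neq\emptyset$, the strata cover $N$ and are pairwise disjoint. Theorem~\ref{kempf thm}(i) gives $\wedge^\rho(g\cdot v)=g\wedge^\rho(v)g^{-1}$, and conjugation preserves conjugacy classes, so each $S_{[\lambda]}$ is $G$-invariant. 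For finiteness I would fix a maximal torus $T$: since $M^\rho$ takes only finitely many values and, by Lemma~\ref{determine wts Hess} together with the discussion following it, any 1-PS in $T$ that is $\rho$-adapted to some point is the indivisible generator of one of the finitely many cones built from subsets of the finite $T$-weight set, only finitely many conjugacy classes $[\lambda]$ can occur.

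Local closedness is the technical core. Fixing a representative $\lambda$, I would write $V=\bigoplus_m V^\lambda_m$ for its weight decomposition, so that $V^+_\lambda:=\bigoplus_{m\geq 0}V^\lambda_m$ is the \emph{closed} linear subspace on which $\lim_{t\to 0}\lambda(t)\cdot(-)$ exists and $p_\lambda:V^+_\lambda\to V^\lambda_0$ is the algebraic projection onto the fixed subspace. The first step is to identify $Z_\lambda$ with the locus of points of $V^\lambda_0$ that are semistable for the Levi $Z_G(\lambda)$ acting on $V^\lambda_0$, linearised by a shifted character $\rho_\lambda$ (the shift of $\rho$ by the functional dual to $\lambda$ that makes $\lambda$ the balanced optimal direction); this semistable locus is open in $V^\lambda_0$, hence locally closed in $V$. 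The key lemma is then the affine analogue of Kempf's characterisation: for $v\in V^+_\lambda$ one has $\lambda\in\wedge^\rho(v)$ if and only if $p_\lambda(v)\in Z_\lambda$. The implication $\lambda\in\wedge^\rho(v)\Rightarrow p_\lambda(v)\in Z_\lambda$ is Theorem~\ref{kempf thm}(v); the converse amounts to propagating optimality of $\lambda$ from the limit back to $v$, which reduces to the specialization inequality $M^\rho(p_\lambda(v))\leq M^\rho(v)$ combined with the automatic bound $M^\rho(v)\leq (\rho,\lambda)/\|\lambda\|$ coming from allowability of $\lambda$ for $v$. Granting the lemma, $S_\lambda=p_\lambda^{-1}(Z_\lambda)$ is open in $V^+_\lambda$ and so locally closed in $V$; and since $S_\lambda$ is $P(\lambda)$-invariant with $S_{[\lambda]}=G\cdot S_\lambda$, I would realise $S_{[\lambda]}$ as the image of $G\times_{P(\lambda)}S_\lambda\to V$, which is proper onto its image because $G/P(\lambda)$ is projective, yielding local closedness of $S_{[\lambda]}$.

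For the boundary ordering, the containment $S_\lambda\subseteq V^+_\lambda$ and closedness of $V^+_\lambda$ give $\overline{S_\lambda}\subseteq V^+_\lambda$, so $\lambda$ is allowable for every $v'\in\overline{S_\lambda}$. Hence $(\rho,\lambda)/\|\lambda\|$ occurs in the infimum defining $M^\rho(v')$, forcing $M^\rho(v')\leq (\rho,\lambda)/\|\lambda\|$, with equality implying $\lambda\in\wedge^\rho(v')$ and therefore $v'\in S_\lambda$. Transporting this from $S_\lambda$ to $S_{[\lambda]}=G\cdot S_\lambda$ through the proper map above and using $G$-invariance of $M^\rho$, any $v'\in\partial S_{[\lambda]}=\overline{S_{[\lambda]}}\setminus S_{[\lambda]}$ must satisfy $M^\rho(v')<M^\rho|_{S_{[\lambda]}}$ strictly; if in addition $v'\in S_{[\lambda']}$ this reads $(\rho,\lambda')/\|\lambda'\|<(\rho,\lambda)/\|\lambda\|$, i.e. $[\lambda]<[\lambda']$, which is exactly the asserted condition.

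The main obstacle I anticipate is the characterisation lemma in the local-closedness step: pinning down the shifted linearisation $\rho_\lambda$ on the Levi and proving the equivalence between $\lambda$ being $\rho$-adapted to $v$ and $\rho_\lambda$-semistability of $p_\lambda(v)$ for $Z_G(\lambda)$. This is where Kempf's and Kirwan's projective arguments must be genuinely reworked in the affine, character-twisted setting, and it is also the source of the specialization inequality $M^\rho(p_\lambda(v))\leq M^\rho(v)$ needed for the converse direction; everything else is formal once this equivalence is in place.
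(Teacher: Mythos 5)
Your proposal is correct and follows essentially the same route as the paper: disjointness and $G$-invariance from Theorem~\ref{kempf thm}, local closedness via the identification of $Z_\lambda$ with the $\rho_\lambda$-semistable locus for $G_\lambda$ acting on $V^\lambda$ (the paper's Proposition~\ref{prop on Zlambda}) together with the key lemma $S_\lambda=p_\lambda^{-1}(Z_\lambda)$ (the paper's Lemma~\ref{lemma on Slambda}), and the boundary ordering from the closedness of $GV_+^\lambda$. The one cosmetic difference is in the converse direction of the key lemma: you phrase the missing ingredient as a specialization inequality $M^\rho(p_\lambda(v))\leq M^\rho(v)$, whereas the paper obtains the same contradiction by first proving the boundary-ordering claim from $\overline{S_{[\lambda']}}\subset GV_+^{\lambda'}$ and allowability of $\lambda'$ for points of that closed set --- which is exactly the mechanism that proves your inequality, so the two arguments coincide in substance.
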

\begin{proof} By Theorem \ref{kempf thm}, the strata are disjoint and $G$-invariant. Let $\lambda$ be a 1-PS of $G$ which indexes a Hesselink stratum $S_{[\lambda]}$; then we write $V = \oplus_r V_r$ where $V_r=\{ v \in V : \lambda(t) \cdot v = t^r v \}$. We let $V^\lambda = V_0$ denote the fixed point locus for the action of $\lambda$ and $V^{\lambda}_+ = \oplus_{r \geq 0} V_r$ denote the locus consisting of points $v \in V$ for which $\lim_{t \to 0} \lambda(t) \cdot v$ exists. Then both $V^\lambda$ and $V^\lambda_+$ are closed subsets of $V$. There is a projection $p_\lambda : V_+^\lambda \ra V^\lambda $ given by sending a point to its limit under $\lambda(t)$ as $t \to 0$. Clearly we have a containment of the blade $S_\lambda \subset V_+^\lambda$ and its limit set $Z_\lambda \subset V^\lambda$. In fact, $Z_\lambda$ is an open subset of $V^{\lambda}$ and $S_\lambda = p_\lambda^{-1} (Z_\lambda)$ by Proposition \ref{prop on Zlambda} and Lemma \ref{lemma on Slambda} below. It follows from this that $S_{[\lambda]} = GS_\lambda$ is a locally closed subvariety of $V$.

To prove the claim about the boundary, we note that as $S_{[\lambda]}$ is contained in the closed set $G V_+^{\lambda}$, so is its closure. If $v \in \partial S_{[\lambda]} \cap S_{[\lambda']}$; then $v' = g \cdot v \in V_+^\lambda$. As $\lim_{t \to 0} \lambda(t) \cdot v'$ exists
\[ M^\rho(v)=M^\rho(v') = \frac{(\rho,\lambda')}{||\lambda'||} < \frac{(\rho,\lambda)}{||\lambda||};\]
that is, $[\lambda] < [\lambda']$.
\end{proof}

\begin{rmk}
From this stratification, we get a stratification
\begin{equation}\label{hess strat} V = \bigsqcup S_{[\lambda]} \end{equation}
where the minimal stratum $S_{[0]}:= V^{\rho\text{-ss}}$ is open and is indexed by the trivial 1-PS. We shall refer to this stratification of $V$ as Hesselink's stratification.
\end{rmk}

If $\lambda$ is a non-trivial indivisible 1-PS which indexes a Hesselink stratum, we let $G_{\lambda}$ denote the subgroup of $G$ consisting of elements $g \in G$ which commute with $\lambda(t)$ for all $t$; then this group acts on the fixed point locus
\[V^{\lambda}:= \{ v \in V : \lambda(\GG_m) \subset G_v \}\]
and we can use a character $\rho_\lambda$ of $G_\lambda$ to linearise the action. We use $\lambda^*$ to denote the character of $G_\lambda$ which is dual to the 1-PS $\lambda$ of $G_\lambda$ under the given inner product and (using additive notation for the group of characters) define
\begin{equation}\label{defn rholambda}
 \rho_\lambda :=||\lambda||^2 \rho -(\rho,\lambda) \lambda^*.
\end{equation}
We note that $||\lambda||^2$ and $-(\rho,\lambda)$ are both positive integers and so $\rho_{\lambda}$ is a character of $G_\lambda$. We want to compare the semistable set $(V^\lambda)^{\rho_\lambda\text{-ss}}$ for this action of $G_\lambda$ with the limit set $ Z_{\lambda} \subset V^\lambda$ consisting of points $v$ fixed by $\lambda$ and for which $\lambda$ is $\rho$-adapted to $v$. For the projective version of the following proposition, see Kirwan \cite{kirwan} Remark 12.21 and Ness \cite{ness} Theorem 9.4.

\begin{prop}\label{prop on Zlambda}
Let $\lambda$ be a non trivial 1-PS which indexes a Hesselink stratum. Then the limit set $Z_{\lambda}$ is equal to the semistable subset for the action of $G_\lambda$ on $V^{\lambda}$ with respect to the linearisation given by $\rho_\lambda$.
\end{prop}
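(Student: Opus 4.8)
The plan is to prove the two inclusions $Z_\lambda \subseteq (V^\lambda)^{\rho_\lambda\text{-ss}}$ and $(V^\lambda)^{\rho_\lambda\text{-ss}} \subseteq Z_\lambda$ separately, reducing each to a statement about a single maximal torus where the question becomes convex geometry in $\chi_*(T)_\RR \cong \RR^n$. The heart of the argument is this torus case. Fix a maximal torus $T$ with $\lambda \in \chi_*(T)$; then $T \subseteq G_\lambda$ and $V^\lambda$ is a $T$-submodule with $\mathrm{wt}_T(v) \subseteq \lambda^\perp$ for every $v \in V^\lambda$. Using the inner product to identify $\chi^*(T)_\RR \cong \chi_*(T)_\RR$, the dual character $\lambda^*$ becomes the vector $\lambda$ and $(\rho,\lambda)$ becomes $\langle \rho,\lambda\rangle$, so that $\rho_\lambda = ||\lambda||^2\rho - (\rho,\lambda)\lambda^*$ is identified with $||\lambda||^2$ times the orthogonal projection of $\rho$ onto $\lambda^\perp$. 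In particular $H_{\rho_\lambda} = \{\lambda' : \langle \rho - \tfrac{\langle\rho,\lambda\rangle}{||\lambda||^2}\lambda,\, \lambda'\rangle \geq 0\}$, and by the semistability criterion for torus actions, $v$ is $\rho_\lambda$-semistable for $T$ on $V^\lambda$ if and only if $C_v \subseteq H_{\rho_\lambda}$ (the cone $C_v$ is the same whether $v$ is viewed in $V$ or in $V^\lambda$).

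I would then show this is equivalent to $\lambda$ being $T$-adapted to $v$, i.e. to $\lambda$ spanning the unique ray of $C_v$ minimising $g(\lambda') := (\rho,\lambda')/||\lambda'||$ (Lemma \ref{determine wts Hess}). One direction is first-order optimality: since $g$ is homogeneous of degree $0$ we have $\langle \nabla g(\lambda),\lambda\rangle = 0$ and $\nabla g(\lambda)$ is a positive multiple of the projected vector, so minimality of $\lambda$ over the convex cone $C_v$ yields $\langle \nabla g(\lambda),\lambda'\rangle \geq 0$ for all $\lambda' \in C_v$, which is exactly $C_v \subseteq H_{\rho_\lambda}$. For the converse I would use that $(\rho,\lambda) < 0$ (valid since $\lambda$ indexes a stratum) together with Cauchy--Schwarz to check directly that $C_v \subseteq H_{\rho_\lambda}$ forces $g(\lambda') \geq g(\lambda)$ for every $\lambda' \in C_v$, so that $\lambda$ is the adapted 1-PS.

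With the torus case in hand, both inclusions follow by choosing the right maximal torus. For $Z_\lambda \subseteq (V^\lambda)^{\rho_\lambda\text{-ss}}$, suppose $v \in Z_\lambda$ were not $\rho_\lambda$-semistable for $G_\lambda$; by Proposition \ref{HM crit} there is a 1-PS $\lambda'$ of $G_\lambda$ with $\lim_{t\to 0}\lambda'(t)\cdot v$ existing and $\mu^{\rho_\lambda}(v,\lambda') < 0$. I would place $\lambda$ and $\lambda'$ in a common maximal torus $T$ of $G_\lambda$ (possible since $\lambda$ is central in $G_\lambda$), which is then maximal in $G$. As $T \subseteq G$, adaptedness of $\lambda$ for $G$ forces adaptedness for $T$, since the $G$-infimum defining $M^\rho$ is attained by $\lambda \in \chi_*(T)$, giving $M^\rho_T(v) = M^\rho_G(v)$; the torus case then gives $T$-semistability, contradicting $\lambda'$. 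For the reverse inclusion, given $v$ that is $\rho_\lambda$-semistable for $G_\lambda$, note $v$ is $\rho$-unstable for $G$ (as $\lambda$ fixes $v$ and $(\rho,\lambda) < 0$) and $\lambda \in G_v \subseteq P(v,\rho)$ by Theorem \ref{kempf thm}(iii). I would then take a maximal torus $T$ of $P(v,\rho)$ containing $\lambda$; being abelian and containing $\lambda$ it lies in $G_\lambda$, and by Theorem \ref{kempf thm}(iv) it carries a unique indivisible $\lambda_T \in \wedge^\rho(v)$. Restricting $\rho_\lambda$-semistability from $G_\lambda$ to $T$ and applying the torus case shows $\lambda$ is the unique indivisible $T$-adapted 1-PS, whence $\lambda = \lambda_T \in \wedge^\rho(v)$ and $v \in Z_\lambda$.

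I expect the main obstacle to be the reverse inclusion, and specifically ruling out destabilising 1-PSs of $G$ that do not commute with $\lambda$: a priori such a 1-PS could make $M^\rho_G(v)$ strictly smaller than $(\rho,\lambda)/||\lambda||$ and so prevent $\lambda$ from being adapted. The resolution is the structural input of Kempf's theorem — that $G_v$, and hence $\lambda$, lies in $P(v,\rho)$, and that a maximal torus of this parabolic containing $\lambda$ both lies in $G_\lambda$ and carries the canonical torus-adapted 1-PS $\lambda_T$. This is what confines the whole problem to the combinatorics of a single maximal torus; verifying that this torus genuinely lies in $G_\lambda$ and that $\lambda_T$ coincides with $\lambda$ is the crux of the proof.
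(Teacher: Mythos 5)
Your proposal is correct, and it reaches the result by the same two structural inputs as the paper --- Kempf's theorem (Theorem \ref{kempf thm}) to confine the problem to the centraliser of $\lambda$, and the observation that $\rho_\lambda$ is a positive multiple of the orthogonal projection of $\rho$ onto $\lambda^{\perp}$ --- but the two individual computations are organised differently, so a comparison is worthwhile. For the inclusion $(V^\lambda)^{\rho_\lambda\text{-ss}} \subseteq Z_\lambda$, the paper argues contrapositively: it first shows $G_\lambda$ is optimal for $v$ by conjugating an adapted 1-PS into $G_\lambda$ via an element of $P(v,\rho)$, and then applies the Cauchy--Schwarz estimate $(\lambda,\lambda') \leq ||\lambda||\,||\lambda'||$; you instead work directly, choosing a maximal torus of $P(v,\rho)$ through $\lambda$ (which, as you note, lies in $G_\lambda$), and combining the same Cauchy--Schwarz computation with part iv) of Theorem \ref{kempf thm} and the uniqueness in Lemma \ref{determine wts Hess} to force $\lambda = \lambda_T$. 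This is the same use of Kempf's theorem in a different order, and your worry about destabilising 1-PSs not commuting with $\lambda$ is resolved exactly as in the paper. For the other inclusion the arguments genuinely diverge: the paper perturbs $\lambda$ to $\lambda + \lambda'$ inside a common maximal torus and compares angles ($\theta_{\rho,\lambda+\lambda'} > \theta_{\rho,\lambda}$), whereas you apply the first-order optimality condition for the degree-zero-homogeneous function $g(\lambda') = (\rho,\lambda')/||\lambda'||$ at its minimum over the convex cone $C_v$, noting that $\nabla g(\lambda)$ is a positive multiple of $\rho_\lambda$ and that $(\nabla g(\lambda),\lambda) = 0$, so minimality yields $(\rho_\lambda,\lambda') \geq 0$ for all $\lambda' \in C_v$. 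Your version makes explicit the convexity that the paper's angle argument uses implicitly, and packages both directions into a single clean torus-level biconditional; the paper's version avoids stating that biconditional but pays for it with two asymmetric ad hoc computations. Both proofs are complete, and the reductions to tori that you flag as the crux (a maximal torus of $G_\lambda$ is maximal in $G$; a torus of $P(v,\rho)$ containing $\lambda$ centralises $\lambda$) do hold.
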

\begin{proof}
Let $v \in V^{\lambda}$; then firstly we claim that $G_\lambda \subset G$ is optimal for $v$ in the sense of Definition \ref{def adapted}. By Theorem \ref{kempf thm} ii), if $\lambda' \in \wedge^\rho(v)$ then $\lambda'$ is a 1-PS of $P(v,\rho)$ and by iii) of the same theorem, $ G_v \subset P(v,\rho)$. Hence, there exists $p \in P(v,\rho)$ such that $p\lambda'p^{-1}$ and $\lambda$ commute; that is, $p\lambda'p^{-1} \in \chi_*(G_\lambda) \cap \wedge^\rho(v)$ which proves the claim.

Firstly, suppose that $\lambda$ is not $\rho$-adapted to $v \in V^\lambda$ so that $v \notin Z_{\lambda}$. Then there is a 1-PS $\lambda'$ for which $\lim_{t \to 0} \lambda'(t) \cdot v$ exists and such that
\begin{equation}\label{eqn in NK}
 \frac{(\rho,\lambda')}{||\lambda'||} < \frac{(\rho,\lambda)}{||\lambda||} 
\end{equation}
and as $G_\lambda$ is optimal for $v$, we can assume that $\lambda'$ is a 1-PS of $G_\lambda$. By definition of $\rho_\lambda$ we have
\[ \mu^{\rho_\lambda}(v,\lambda') = (\rho_\lambda,\lambda') = ||\lambda||^2(\rho,\lambda') - (\rho,\lambda)(\lambda,\lambda') \]
and using (\ref{eqn in NK}) this gives
\[  \mu^{\rho_\lambda}(v,\lambda')< (\rho,\lambda) \left( ||\lambda|| \: ||\lambda'|| - (\lambda,\lambda') \right). \]
Then since $(\lambda,\lambda') = ||\lambda|| \: || \lambda'|| \cos \theta_{\lambda,\lambda'} \leq ||\lambda|| \:||\lambda'||$ and $(\rho,\lambda) <0$, we have
\[  \mu^{\rho_\lambda}(v,\lambda')< (\rho,\lambda) \left( ||\lambda|| \: ||\lambda'|| - (\lambda,\lambda') \right) \leq 0 \]
which implies $v$ is not $\rho_{\lambda}$-semistable by Proposition \ref{HM crit}.

Conversely, if $v$ is unstable with respect to the action of $G_\lambda$ linearised by $\rho_\lambda$ then there is a 1-PS $\lambda'$ of $G_\lambda$ such that
\[ \mu^{\rho_\lambda}(v,\lambda'):= (\rho_\lambda,\lambda')< 0. \]
Take a maximal torus $T$ of $G_\lambda$ which contains both $\lambda$ and $\lambda'$; then in $\RR^n \cong\chi^*(T)_\RR \cong \chi_*(T)$ consider the vectors $\rho$, $\rho_\lambda$, $\lambda$ and $\lambda'$. By definition of $\rho_\lambda$, this vector is contained in the interior of the cone spanned by $\rho$ and $\lambda$ and is orthogonal to $\lambda$:
\[ (\rho_\lambda,\lambda) = ||\lambda||^2 (\rho,\lambda) -(\rho,\lambda)( \lambda,\lambda) = 0. \]
As $(\rho,\lambda)< 0$ and $(\rho_\lambda,\lambda')< 0$ we also know that the angles $\theta_{\rho,\lambda}$ and $\theta_{\rho_\lambda,\lambda'}$ are both at least $\pi/2$; therefore $\theta_{\rho,\lambda + \lambda'} > \theta_{\rho,\lambda} > \pi/2$ and so
\[ \frac{\mu^\rho(v,\lambda+\lambda')}{||\lambda+\lambda'||} = ||\rho|| \cos(\theta_{\rho,\lambda + \lambda'}) < ||\rho || \cos (\theta_{\rho,\lambda}) = \frac{\mu^\rho(v,\lambda)}{||\lambda||} \]
which shows that $\lambda$ is not $\rho$-adapted to $v$.
\end{proof}

\begin{lemma}\label{lemma on Slambda}
Let $\lambda$ be a non-trivial 1-PS of $G$ which indexes a Hesselink stratum $S_{[\lambda]}$; then
\[ S_\lambda = p_\lambda^{-1}(Z_\lambda) \]
where $p_\lambda : V^\lambda_+ \ra V^\lambda$ is the retraction given by sending $v$ to $\lim_{t \to 0} \lambda(t) \cdot v$. Moreover, $S_\lambda$ is a locally closed subvariety of $V$.
\end{lemma}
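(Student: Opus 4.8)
The plan is to prove the set equality $S_\lambda = p_\lambda^{-1}(Z_\lambda)$ by a double inclusion and then read off local closedness. First note that $S_\lambda \subseteq V^\lambda_+$: if $\lambda \in \wedge^\rho(v)$ then $\lambda$ is in particular an allowable 1-PS for $v$, so $\lim_{t\to 0}\lambda(t)\cdot v$ exists and $p_\lambda(v)$ is defined. For the inclusion $S_\lambda \subseteq p_\lambda^{-1}(Z_\lambda)$, take $v\in S_\lambda$ and set $w = p_\lambda(v)=\lim_{t\to 0}\lambda(t)\cdot v$. Since $\lambda\in\wedge^\rho(v)$, Theorem \ref{kempf thm}(v) gives $\lambda\in\wedge^\rho(w)$; as $w\in V^\lambda$ we have $\lambda\in G_w$, so $\lambda\in\wedge^\rho(w)\cap G_w$ and hence $w\in Z_\lambda$ (note $w\in N$ since $M^\rho(w)=(\rho,\lambda)/||\lambda||<0$). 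This direction is immediate.

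The content is the reverse inclusion $p_\lambda^{-1}(Z_\lambda)\subseteq S_\lambda$. Take $v\in V^\lambda_+$ with $w:=p_\lambda(v)\in Z_\lambda$, so that $\lambda\in\wedge^\rho(w)$ and $M^\rho(w)=(\rho,\lambda)/||\lambda||$. Because $\lambda$ is allowable for $v$, one bound is free: $M^\rho(v)\le (\rho,\lambda)/||\lambda|| = M^\rho(w)$. It remains to prove the reverse inequality $M^\rho(v)\ge M^\rho(w)$; granting this, $\lambda$ (allowable and indivisible) attains $M^\rho(v)$, so $\lambda\in\wedge^\rho(v)$ and $v\in S_\lambda$ (with $v\in N$ as $M^\rho(v)<0$ by the Hilbert--Mumford criterion, Proposition \ref{HM crit}).

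To prove $M^\rho(v)\ge M^\rho(w)$ I would degenerate an adapted 1-PS of $v$ into the Levi $G_\lambda$. Choose $\lambda'\in\wedge^\rho(v)$, so $M^\rho(v)=(\rho,\lambda')/||\lambda'||$ and $\lim_{t\to0}\lambda'(t)\cdot v$ exists. The parabolics $P(\lambda')=P(v,\rho)$ and $P(\lambda)$ contain a common maximal torus, and since all adapted 1-PSs of $v$ are $P(v,\rho)$-conjugate (Theorem \ref{kempf thm}(iii)), I may replace $\lambda'$ by a $P(v,\rho)$-conjugate lying in $P(\lambda)$. Then $\gamma:=\lim_{s\to0}\lambda(s)\lambda'\lambda(s)^{-1}$ exists and is a nontrivial 1-PS of $G_\lambda$, its generator being the $\lambda$-weight-zero part $X'_0$ of the (semisimple, nonzero) generator $X'$ of $\lambda'$; $X'_0\neq 0$ because $\oplus_{r>0}\mathfrak{g}_r$ is nilpotent. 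Conjugation-invariance of $\rho$ and of $||\cdot||$ give $(\rho,\gamma)=(\rho,\lambda')<0$, while orthogonality of the $\lambda$-weight decomposition of $\mathrm{Lie}\,G$ gives $||\gamma||=||X'_0||\le ||X'||=||\lambda'||$; as the numerator is negative this yields $(\rho,\gamma)/||\gamma||\le (\rho,\lambda')/||\lambda'||=M^\rho(v)$. Finally $\gamma$ is allowable for $w$: from $\lambda(s)\lambda'(t)\cdot v=\bigl(\lambda(s)\lambda'\lambda(s)^{-1}\bigr)\cdot\lambda(s)v$ one gets $\gamma(t)\cdot w=\lim_{s\to0}\lambda(s)\lambda'(t)\cdot v$, and since $\lambda'\in P(\lambda)$ the subspace $V^\lambda_+$ is $\lambda'$-stable, so $v':=\lim_{t\to0}\lambda'(t)\cdot v\in V^\lambda_+$; commuting the two existing limits identifies $\lim_{t\to0}\gamma(t)\cdot w=p_\lambda(v')$, which exists. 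Hence $M^\rho(w)\le (\rho,\gamma)/||\gamma||\le M^\rho(v)$, completing the inequality and the inclusion.

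For the final assertion, $p_\lambda:V^\lambda_+\to V^\lambda$ is the linear projection onto the $\lambda$-fixed subspace, a morphism with closed source $V^\lambda_+\subseteq V$; by Proposition \ref{prop on Zlambda}, $Z_\lambda=(V^\lambda)^{\rho_\lambda\text{-ss}}$ is open in $V^\lambda$. Therefore $S_\lambda=p_\lambda^{-1}(Z_\lambda)$ is open in $V^\lambda_+$, hence locally closed in $V$. The main obstacle is the inequality $M^\rho(v)\ge M^\rho(w)$, equivalently the optimality of $G_\lambda$ for points of $V^\lambda_+$ lying over $Z_\lambda$. The two delicate points are that the adapted 1-PS of $v$ can be arranged inside $P(\lambda)$, so that its Levi degeneration $\gamma$ exists, and that this $\gamma$ remains allowable for $w$; the latter rests on interchanging the limits $s\to0$ and $t\to0$, which is legitimate precisely because $\lambda'\in P(\lambda)$ forces $v'\in V^\lambda_+$ and the family $(s,t)\mapsto\lambda(s)\lambda'(t)\cdot v$ extends over the origin.
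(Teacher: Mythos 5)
Your proof is correct, but the reverse inclusion $p_\lambda^{-1}(Z_\lambda)\subseteq S_\lambda$ is handled by a genuinely different mechanism than in the paper. The paper argues indirectly: if $v\in V^\lambda_+$ with $w=p_\lambda(v)\in Z_\lambda$ lay in some other stratum $S_{[\lambda']}$, then $w\in\partial S_{[\lambda']}\cap S_{[\lambda]}$, and the boundary-ordering property established in the proof of Theorem \ref{hess thm} (which rests only on the closedness of $GV^\lambda_+$ and disjointness of the strata) forces $(\rho,\lambda')/||\lambda'||>(\rho,\lambda)/||\lambda||$, contradicting the trivial bound $M^\rho(v)=(\rho,\lambda')/||\lambda'||\le(\rho,\lambda)/||\lambda||$. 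You instead prove directly that $M^\rho(w)\le M^\rho(v)$ by degenerating an adapted 1-PS $\lambda'$ of $v$ into the Levi $G_\lambda$ --- in effect extending the ``$G_\lambda$ is optimal'' argument that opens the proof of Proposition \ref{prop on Zlambda} from $V^\lambda$ to $V^\lambda_+$. Your route is longer and must negotiate two delicate points the paper sidesteps (placing $\lambda'$ inside $P(\lambda)$ via a common maximal torus and Theorem \ref{kempf thm}(iv), and interchanging the $s$- and $t$-limits, which is indeed legitimate since the coordinates of $(s,t)\mapsto\lambda(s)\lambda'(t)\cdot v$ are Laurent polynomials and both partial limits exist), but it is self-contained and does not invoke the global order structure of the stratification. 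One small repair: you justify $||\gamma||\le||\lambda'||$ by orthogonality of the $\lambda$-weight decomposition of $\mathfrak{g}$, but the norm in this paper is defined only on conjugacy classes of 1-PSs (equivalently on $\mathfrak{K}$), not on all of $\mathfrak{g}$. The cleaner observation is that $\gamma$, being the image of $\lambda'$ under the projection $P(\lambda)\ra G_\lambda$, is conjugate to $\lambda'$ by an element of the unipotent radical of $P(\lambda)$ (Levi subgroups of a parabolic are conjugate under the unipotent radical), so conjugation-invariance gives $||\gamma||=||\lambda'||$ and $(\rho,\gamma)=(\rho,\lambda')$ on the nose; this yields $(\rho,\gamma)/||\gamma||=M^\rho(v)$ exactly and removes the need for any inequality at that step.
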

\begin{proof}
 By Theorem \ref{kempf thm}, if $v \in S_\lambda$, then $p_{\lambda}(v) \in Z_\lambda$. Now suppose $v \in V_+^\lambda$ and $p_\lambda(v) \in Z_\lambda$. If $v \notin S_{[\lambda]}$, then $v$ is $\rho$-unstable as $\mu^{\rho}(v,\lambda)=(\rho,\lambda) <0$ and so we can assume $v \in S_{[\lambda']}$. Then $p_\lambda(v) \in \partial S_{[\lambda']} \cap S_{[\lambda]}$ and so $[\lambda'] < [\lambda]$. However as both $\lim_{t \to 0} \lambda(t) \cdot v$ and $\lim_{t \to 0} \lambda'(t) \cdot v$ exist,
\begin{equation}\label{eqn F}
 M^\rho(v) =\frac{(\rho,\lambda')}{||\lambda'||} < \frac{(\rho,\lambda)}{||\lambda||}
\end{equation}
which contradicts $[\lambda'] < [\lambda]$. Hence $v \in S_{[\lambda]}\cap V_+^\lambda = S_\lambda$. We note that $Z_\lambda$ is open in $V^{\lambda}$ as it is the semistable set for a reductive group action and since also $S_\lambda = p_\lambda^{-1}(Z_\lambda)$ we have that $S_\lambda$ is locally closed. 
\end{proof}

\subsection{The indices for Hesselink's stratification}\label{hess ind}
If we fix a maximal torus $T$ of $G$, then every conjugacy class $[\lambda]$ has a representative $\lambda$ which is a 1-PS of $T$. It follows that to calculate the indices $[\lambda]$ occurring in Hesselink's stratification, we can just calculate the 1-PSs of $T$ which are $\rho$-adapted to $\rho$-unstable points for the action of $T$. This can be done combinatorially as in Lemma \ref{determine wts Hess} where we find the unique indivisible 1-PS which is adapted to an unstable point $v$ by finding the ray in the cone $C_v$ whose angle to $\rho$ is maximal. As there are only finitely many $T$-weights, there are only finitely many cones $C_v$ to consider (which can be determined combinatorially from subsets of the $T$-weights).

\section{The moment map and symplectic reduction}\label{sec sym}

Let $K$ be a compact real Lie group acting smoothly on a complex vector space $V$.
We fix a Hermitian inner product $H : V \times V \ra \CC$ on $V$ and assume that $K$ acts by unitary transformations of $V$. The imaginary part of $H$ is a symplectic inner product on $V$ which we can multiply by any nonzero real scalar. We let $\omega = \frac{1}{\pi} \text{Im} H : V \times V \ra \RR$ denote our symplectic inner product on $V$; then
\[ \omega(v,w) = \frac{1}{2 \pi i} \left ( H(v,w) - \overline{H(v,w)}\right)=\frac{1}{2 \pi i} \left ( H(v,w) - {H(w,v)}\right).\]
By identifying $TV \cong V \times V$ we see that $(V,\omega)$ is a symplectic manifold and $\omega$ is $K$-invariant. The infinitesimal action is a Lie algebra homomorphism
$\mathfrak{K}  \ra \mathrm{Vect}(V) $ given by $\alpha \mapsto \alpha_V$ where $\alpha_v :=(\alpha_V)_v$ is given by
\[\alpha_{v}= \frac{d}{dt} \exp(t\alpha) \cdot v|_{t = 0} \in T_vV\]
for $\alpha \in \mathfrak{K} =\text{Lie} K$. 

\begin{defn}
A moment map for the action of $K$ on $V$ is a smooth map $\mu : V \ra \mathfrak{K}^*$ where $\mathfrak{K} = \text{Lie} K$ such that 
\begin{enumerate}
\item $\mu$ is equivariant for the given action of $K$ on $V$ and the coadjoint action of $K$ on $\mathfrak{K}^*$.
\item For $v \in V$, $\alpha \in \mathfrak{K}$ and $\zeta \in V \cong T_vV$ we have
\[ d\mu_v (\zeta) \cdot \alpha = \omega(\alpha_v, \zeta) \]
where $\cdot : \mathfrak{K}^* \times \mathfrak{K} \ra \RR$  denotes the natural pairing, $d\mu_v$ denotes the derivative of $\mu$ at $v$ and $\alpha_v \in V \cong T_vV $ denotes the infinitesimal action of $\alpha$ on $v$. 
\end{enumerate}
\end{defn}

\begin{lemma}
For the standard representation of $\U(V)$ on $V$, there is a natural moment map $\mu : V \ra \mathfrak{u}(V)^*$ given by 
\[ \mu(v) \cdot \alpha = \frac{1}{2}\omega(\alpha v,v) =\frac{1}{2\pi i}H(\alpha v,v) \]
for $v \in V$ and $\alpha \in \mathfrak{u}(V)$. Moreover, if $K$ acts unitarily on $V$, then the composition of the above map with the projection $ \mathfrak{u}(V)^* \ra \mathfrak{K}^*$ associated to this action is a moment map for the action on $V$.
\end{lemma}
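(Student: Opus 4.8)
The plan is to verify directly that the proposed map satisfies the two conditions in the definition of a moment map, treating the standard $\U(V)$-representation first and then deducing the general unitary case by functoriality. Since everything is an explicit computation with $H$, $\omega$ and the infinitesimal action, no appeal to earlier results is needed; the content is entirely the skew-Hermitian symmetry of $\mathfrak{u}(V)$.

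Before checking the conditions I would first confirm that the two displayed expressions for $\mu(v)\cdot\alpha$ agree. Writing $\omega(v,w)=\frac{1}{2\pi i}(H(v,w)-H(w,v))$ and using that $\alpha\in\mathfrak{u}(V)$ is skew-Hermitian, so that $H(\alpha v,w)=-H(v,\alpha w)$ and hence $H(v,\alpha v)=-H(\alpha v,v)$, one gets $\omega(\alpha v,v)=\frac{1}{\pi i}H(\alpha v,v)$, giving $\tfrac12\omega(\alpha v,v)=\frac{1}{2\pi i}H(\alpha v,v)$ as claimed. I would also record that for the standard representation the infinitesimal action is simply $\alpha_v=\alpha v$.

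For the differential condition I would compute $d\mu_v(\zeta)\cdot\alpha$ by differentiating $s\mapsto\tfrac12\omega(\alpha(v+s\zeta),v+s\zeta)$ at $s=0$, using $\RR$-bilinearity and antisymmetry of $\omega$; this yields $\tfrac12\bigl(\omega(\alpha v,\zeta)+\omega(\alpha\zeta,v)\bigr)$. The key algebraic fact — and the reason the factor $\tfrac12$ is present — is that for skew-Hermitian $\alpha$ one has $\omega(\alpha\zeta,v)=\omega(\alpha v,\zeta)$, which follows from $H(\alpha\zeta,v)=-H(\zeta,\alpha v)$ together with $H(v,\alpha\zeta)=-H(\alpha v,\zeta)$. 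The two summands are thus equal, so $d\mu_v(\zeta)\cdot\alpha=\omega(\alpha v,\zeta)=\omega(\alpha_v,\zeta)$, which is exactly condition (2).

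For the equivariance condition I would use that $\U(V)$, and hence $K$, acts unitarily so that $\omega$ is invariant, $\omega(kw,kw')=\omega(w,w')$; then $\mu(k\cdot v)\cdot\alpha=\tfrac12\omega(\alpha kv,kv)=\tfrac12\omega(k^{-1}\alpha k\,v,v)=\mu(v)\cdot(\mathrm{Ad}(k^{-1})\alpha)=(\mathrm{Ad}^*(k)\mu(v))\cdot\alpha$, which is condition (1). Finally, for the \emph{moreover} statement, let $\iota:\mathfrak{K}\ra\mathfrak{u}(V)$ be the Lie algebra map induced by the unitary action and $\pi:\mathfrak{u}(V)^*\ra\mathfrak{K}^*$ its dual; since $\iota(\alpha)_v=\iota(\alpha)v=\alpha_v$, both conditions for $\pi\circ\mu$ follow at once by precomposing the identities above with $\iota$. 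I do not expect a genuine obstacle here: the only point needing care is the cancellation producing the factor $\tfrac12$ in the differential condition, which is precisely where the skew-Hermitian identity $\omega(\alpha\zeta,v)=\omega(\alpha v,\zeta)$ enters.
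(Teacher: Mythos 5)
Your proposal is correct and follows essentially the same route as the paper: a direct verification of both moment map axioms via the skew-Hermitian identity $H(\alpha v,w)+H(v,\alpha w)=0$, with the same differentiation of $s\mapsto\tfrac12\omega(\alpha(v+s\zeta),v+s\zeta)$ producing the cancellation of the factor $\tfrac12$, and the same dualisation $\varphi^*\circ\mu_{\U(V)}$ for the general unitary $K$-action. The only cosmetic difference is that you spell out the equivariance computation and package the key step as $\omega(\alpha\zeta,v)=\omega(\alpha v,\zeta)$, whereas the paper asserts equivariance from $\U(V)$-invariance of $H$ and expands the differential fully in terms of $H$.
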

\begin{proof} We firstly note that the second equality in the definition of $\mu$ is due to the fact that
\begin{equation}\label{useful eqn for H} H(\alpha v,w) + H(v,\alpha w) = 0 \end{equation}
for all $v,w \in V$ and $\alpha \in \mathfrak{u}(V)$. The equivariance of $\mu$ follows immediately from the fact that $H$ is invariant for the action of $\U(V)$. The infinitesimal action of $\alpha \in \mathfrak{u}(V)$ at $v \in V$ is
\[ \alpha_v := \frac{d}{dt} \exp(t\alpha) \cdot v = \alpha v \in V \cong T_vV.\]
For $v \in V$, $\alpha \in \mathfrak{u}(V)$ and $\zeta \in V \cong T_vV$, we have
\begin{align*}
 d\mu (v) (\zeta)\cdot \alpha &:= \frac{d}{dt} \mu(v + t \zeta) \cdot \alpha  |_{t=0} = \frac{1}{2 }  \frac{d}{dt}  \omega(\alpha(v + t \zeta),v + t \zeta)|_{t=0} \\ &= \frac{1}{2 }\left[\omega(\alpha v,\zeta)+\omega(\alpha \zeta,v) \right] = \frac{1}{4 \pi i}\left[ H(\alpha v,\zeta) - H(\zeta, \alpha v) +H(\alpha \zeta,v) -H(v,\alpha \zeta)\right] \\
&\begin{smallmatrix}  (\ref{useful eqn for H}) \\=\end{smallmatrix}\frac{1}{2 \pi i} \left[H(\alpha v,\zeta) - H(\zeta,\alpha v) \right] =: \omega(\alpha v,\zeta)
\end{align*}
and hence $\mu$ is a moment map for the action of $\U(V)$. If $\varphi : K \ra \U(V)$ denotes the representation corresponding to the action of $K$ on $V$ and we use $\varphi : \mathfrak{K} \ra \mathfrak{u}(V)$ to also denote the associated Lie algebra homomorphism, then the natural moment map for the $K$-action on $V$ is
\[ \mu_K = \varphi^* \circ \mu_{\U(V)} : V \ra \mathfrak{u}(V)^* \ra \mathfrak{K}^* \]
where $\varphi^*$ is dual to $\varphi$. Thus
\[ \mu_K(v) \cdot \alpha =\frac{1}{2\pi i}H(\varphi(\alpha) v,v), \]
although we shall often for simplicity write $\alpha$ to mean $\varphi(\alpha)$ in such expressions.
\end{proof}

\begin{rmk}
Given a character $\rho : K \ra \U(1)$ we can identify $\U(1) \cong S^1$ and $\mathrm{Lie} S^1 \cong 2\pi i \RR$. Let $d \rho : \mathfrak{K} \ra 2 \pi i \RR$ denote the derivative of this character. Then $\frac{1}{2\pi i}d\rho \in \mathfrak{K}^*$ is fixed by the coadjoint action of $K$ on $\mathfrak{K}^*$ and so we can define a shifted moment map $\mu^\rho : V \ra \mathfrak{K}^*$ by 
\[ \mu^\rho(v) \cdot \alpha = \frac{1}{2}\omega(\alpha v,v) - \frac{1}{2\pi i} d \rho \cdot \alpha = \frac{1}{2\pi i} \left( H(\alpha v,v) - d\rho \cdot \alpha\right). \]
We shall refer to this as the natural moment map for the action of $K$ shifted by $\rho$.
\end{rmk}

In symplectic geometry, the symplectic reduction 
is used as a quotient:

\begin{defn}
The symplectic reduction for the action of $K$ on $V$ (with respect to the character $\rho$) is given by $V^{\rho\text{-red}} = (\mu^\rho)^{-1}(0)/K$.
\end{defn}

If $0$ is a regular value of $\mu^\rho$ and the action of $K$ on $V$ is proper, then the symplectic reduction  $V^{\rho-\mathrm{red}}$ is a real manifold of dimension $\dim_\RR V - 2 \dim_\RR K$. Moreover, Marsden and Weinstein \cite{mw} and Meyer \cite{meyer} show that there is a canonical induced symplectic form $\omega^{\mathrm{red}}$ on $V^{\rho\text{-red}}$ such that $i^* \omega = \pi^* \omega^{\text{red}}$ where $i : (\mu^\rho)^{-1}(0) \hookrightarrow V$ is the inclusion and $\pi :  (\mu^\rho)^{-1}(0) \ra V^{\rho\text{-red}} $ is the projection.

\subsection{The norm square of the moment map}

Given an inner product $(-,-)$ on the Lie algebra $\mathfrak{K}$ of $K$ which is invariant under the adjoint action of $K$ on $\mathfrak{K}$, we can identify $\mathfrak{K} \cong \mathfrak{K}^*$. We let $|| - ||$ denote the associated norm on $\mathfrak{K} \cong \mathfrak{K}^*$ given by $||\alpha||^2 = (\alpha,\alpha)$.

Suppose we have an action of $K$ on a complex affine space $V$ (by unitary transformations) with natural moment map $\mu:=\mu^\rho : V \ra \mathfrak{K}^*$ shifted by a character $\rho$ as above. We let $\mu^* : V \ra \mathfrak{K}$ denote the map constructed from $\mu$ and the isomorphism $\mathfrak{K}^* \cong \mathfrak{K}$; thus
\[ \mu(v) \cdot \alpha =(\mu^*(v),\alpha).\]
We consider the norm square of the moment map
\[  \begin{array}{ll} || \mu ||^2 : & V \ra \RR \\ & v \mapsto ||\mu(v)||^2 \end{array}\]
which is a real analytic function. The derivative of this function is a 1-form $d ||\mu||^2$ on $V$ which at $v \in V$ and $\zeta \in V \cong T_vV$ is given by
\begin{align}\label{grad} 
 d||\mu||^2_v(\zeta) &:= \frac{d}{dt} || \mu(v + t \zeta )||^2 |_{t = 0}  = 2(d\mu_v(\zeta),\mu(v))  \\&= 2d\mu_v(\zeta) \cdot \mu^*(v) =2\omega(\mu^*(v)_v,\zeta). \notag
\end{align}
The critical locus $\text{crit} || \mu ||^2 := \{ v : d||\mu||^2_v = 0 \}$ is a closed subset of $V$.

\begin{lemma}\label{crit}
Let $v \in V$; then the following are equivalent:
\begin{enumerate}
\item $v$ is a critical point of $||\mu||^2 : V \ra \RR$.
\item The infinitesimal action of $\mu^*(v)$ at $v$ is zero:  $\mu^*(v)_v  = 0$.
\item $\mu^*(v) \in \mathfrak{K}_v$ where $\mathfrak{K}_v$ is the Lie algebra of the stabiliser $K_v$ of $v$. 
\end{enumerate} 
\end{lemma}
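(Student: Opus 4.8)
The plan is to prove the statement as a chain of equivalences, splitting it into (1)$\iff$(2) and (2)$\iff$(3); each half reduces to a single elementary fact, so the argument is short once the gradient formula (\ref{grad}) is in place.

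First I would handle (1)$\iff$(2) using the computation of the derivative of $||\mu||^2$ already carried out at (\ref{grad}), namely
\[ d||\mu||^2_v(\zeta) = 2\omega(\mu^*(v)_v,\zeta) \]
for every $\zeta \in V \cong T_vV$. The key point is that $\omega = \frac{1}{\pi}\text{Im}\, H$ is nondegenerate, being the imaginary part of a (positive definite) Hermitian inner product and hence a symplectic form on $V$. Therefore the linear functional $\zeta \mapsto \omega(\mu^*(v)_v,\zeta)$ vanishes identically if and only if $\mu^*(v)_v = 0$, which is exactly the equivalence of $v$ being a critical point of $||\mu||^2$ with the vanishing of the infinitesimal action of $\mu^*(v)$ at $v$.

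Next I would treat (2)$\iff$(3) by identifying the kernel of the infinitesimal action map $\mathfrak{K} \ra T_vV$, $\alpha \mapsto \alpha_v$. I claim this kernel is precisely $\mathfrak{K}_v$. Indeed, if $\alpha \in \mathfrak{K}_v$ then $\exp(t\alpha)\cdot v = v$ for all $t$, and differentiating at $t=0$ gives $\alpha_v = 0$; conversely, since the action is linear through $\varphi : K \ra \U(V)$, the condition $\alpha_v = \varphi(\alpha)v = 0$ forces
\[ \exp(t\alpha)\cdot v = \exp(t\,\varphi(\alpha))\,v = \sum_{n \geq 0} \tfrac{t^n}{n!}\,\varphi(\alpha)^n v = v, \]
because every higher term contains the factor $\varphi(\alpha)v = 0$, so $\alpha \in \mathfrak{K}_v$. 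Applying this to $\alpha = \mu^*(v)$ yields $\mu^*(v)_v = 0$ if and only if $\mu^*(v) \in \mathfrak{K}_v$, completing the chain.

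The only step requiring any care is the kernel identification in (2)$\iff$(3), where the \emph{linearity} of the $K$-action is used crucially to pass from the vanishing of the infinitesimal generator $\alpha_v$ back to the fixing of $v$ by the whole one-parameter subgroup $\exp(t\alpha)$; for a general smooth action one would have to argue more carefully, but here the representation $\varphi$ makes this immediate. I expect this to be the main (though minor) obstacle, with (1)$\iff$(2) following at once from the nondegeneracy of $\omega$. Note also that the shift by $\rho$ causes no trouble, since $\mu^\rho$ differs from the unshifted moment map by a constant element of $\mathfrak{K}^*$, so $d\mu^\rho_v = d\mu_v$ and formula (\ref{grad}) applies verbatim.
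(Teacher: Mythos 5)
Your proof is correct, and it follows exactly the route the paper intends: the paper states Lemma \ref{crit} without proof, treating it as an immediate consequence of the gradient computation at (\ref{grad}), and your argument simply supplies the two routine details (nondegeneracy of $\omega$ for (1)$\iff$(2), and the identification of $\ker(\alpha\mapsto\alpha_v)$ with $\mathfrak{K}_v$ via linearity of the action for (2)$\iff$(3)). Your closing remark that the shift by $\rho$ is a constant and so does not affect $d\mu_v$ is also consistent with how the paper derives (\ref{grad}) for the shifted moment map.
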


Clearly $\mu^{-1}(0) = (\mu^*)^{-1}(0)$ is a subset of the critical locus $\text{crit}||\mu||^2$; we refer to these critical points as minimal critical points. For $\beta \in \mathfrak{K}$, we can consider the function $\mu_\beta : V \ra \RR$ given by $\mu_\beta(v) = \mu(v) \cdot \beta$. As $d\mu_\beta = \omega(\beta_V,-)$, the critical locus of $\mu_\beta$ is the set of points $v \in V$ for which the infinitesimal action of $\beta$ is zero; that is, \[ \text{crit} \mu_\beta = \{ v \in V : \beta_v = 0\}.\] It follows from Lemma \ref{crit} above, that the intersection $ \text{crit}\mu_\beta \cap (\mu^*)^{-1}(\beta)$ is contained in the critical locus $\text{crit}||\mu||^2$. Since $||\mu||^2 : V \ra \RR$ is $K$-invariant, the critical locus $\text{crit}||\mu||^2$ is invariant under the action of $K$. Therefore the disjoint sets
\[ C_{K \cdot \beta}:= K( \text{crit}\mu_\beta \cap (\mu^*)^{-1}(\beta)) =  \text{crit} ||\mu||^2 \cap (\mu^*)^{-1}(K \cdot \beta)\]
indexed by adjoint orbits $K \cdot \beta$ cover the critical locus of $||\mu||^2$ computed at (\ref{grad}) and we refer to these subsets $C_{K \cdot \beta}$ as the critical subsets.

We take a metric compatible with the complex and symplectic structure on $V$ and this defines a duality between 1-forms and vector fields where $\omega(-,\psi)$ is dual to $i \psi$. The 1-form $d||\mu||^2$ is dual (under this metric) to a vector field on $V$ which we denote by $\text{grad} ||\mu||^2$ and from the calculation of $d||\mu||^2$ at (\ref{grad}), we have at $v \in V$ that
\begin{equation}\label{grad2}
  \text{grad}||\mu||^2_v=-2i\mu^*(v)_v .
\end{equation}

\begin{defn}
For $v \in V$, the negative gradient flow of $||\mu||^2$ at $v$ is a path $\gamma_v(t)$ in $V$ which is defined in some open neighbourhood of $0 \in \RR_{\geq 0}$ and satisfies
\[ \gamma_v(0) = v \quad \gamma_v'(t) = - \mathrm{grad} ||\mu||^2_{\gamma_v(t)} . \]
\end{defn}

The negative gradient flow exists by the local existence of solutions to ordinary differential equations and it is well known that $\gamma_v(t)$ exists for all real $t$ (for example, see \cite{jost}). If $v$ is a critical point of $||\mu||^2$, then $\text{grad}||\mu||^2_v = 0$ and so $\gamma_v(t) = v$ is the constant path. From the expression for $\text{grad}||\mu||^2_v $ given at (\ref{grad2}), we see that the finite time negative gradient flow is contained in the orbit of $v$ under the action of the complexified group $K_\CC$ whose Lie algebra $\mathfrak{K}_\CC \cong \mathfrak{K} \oplus i \mathfrak{K}$ is the complexification of $\mathfrak{K}$. Moreover, even though we are working in a (non-compact) affine space, the gradient $\gamma_v(t)$ converges to a critical value $v_\infty$ of $||\mu||^2$ by work of Harada and Wilkin \cite{haradawilkin} and Sjamaar \cite{sjamaar}. 

\subsection{Morse-theoretic stratification}

We use the negative gradient flow of $||\mu||^2$ to produce a Morse-theoretic stratification of $V$ as follows. The index set for the stratification is a finite number of adjoint orbits which index nonempty critical subsets:
\begin{equation}\label{index set}
 \mathcal{B} := \{ K \cdot \beta : \beta \in \mathfrak{K} \text{ and } C_{K \cdot \beta} \neq \emptyset\}.
\end{equation}
For $K \cdot \beta \in \mathcal{B}$, we define the associated stratum 
\[ S_{K \cdot \beta} := \{ v \in V : \lim_{t \to \infty} \gamma_v(t) \in C_{K \cdot \beta} \} \]
to be the set of points whose negative gradient flow converges to $C_{K \cdot \beta}$. We refer to the stratification
\begin{equation}\label{morse strat}
  V = \bigsqcup_{K \cdot \beta \in \mathcal{B}} S_{K \cdot \beta} 
\end{equation}
as a Morse stratification; the partial order on the indices is given by using the $K$-invariant norm $|| - ||$ on $\mathfrak{K}$ in the natural way. We say one stratum is lower or higher than another if its corresponding index is with respect to this norm. In particular, the lowest stratum is indexed by $0 \in \mathfrak{K}$ and $S_0 \subset V$ is open with limit set $C_0 = \mu^{-1}(0)$. 

If we fix a maximal torus $T \subset K$ and a positive Weyl chamber $\mathfrak{t}_+ \subset \mathfrak{t} = \text{ Lie } T$, then an adjoint orbit $K \cdot \beta$ meets $\mathfrak{t}_+$ in a single point $\beta$ and so we can view the index set $\mathcal{B}$ as a finite set of elements in $\mathfrak{t}_+$. 

\subsection{Description of the indices}\label{sec desc indices}

The indices $\beta \in \mathcal{B}$ can be computed from the weights of a maximal torus $T$ of $K$ acting on $V$ similarly to the projective case given in \cite{kirwan,ness}, although the role of weight polytopes is now played by shifted weight cones. A torus action gives an orthogonal decomposition $V = \oplus_\chi V_\chi$ where the indices $\chi$ are characters of $T$ and $V_\chi = \{ v \in V : t \cdot v = \chi(t) v  \text{ for all } t \in T \}$. We use the natural identification $\chi^*(T)_\RR \cong \mathfrak{t}^*$ given by \[\chi \mapsto \frac{1}{2\pi i}d\chi\] to identify (real) characters with elements of $\mathfrak{t}^* \cong \mathfrak{t}$ and will write $\chi$ to mean either a character or an element of $\mathfrak{t}^* \cong \mathfrak{t}$. We refer to the points in the lattice $\chi^*(T) \subset \mathfrak{t}^*$ as integral points. Let $T\text{-wt}:=\{ \chi : V_\chi \neq 0 \}$ denote the set of $T$-weights for the action. The advantage of working with a maximal torus is that the image of a complex vector space under a moment map for a torus action (shifted by $\rho$) is a cone generated by the torus weights (shifted by $-\rho$) \cite{atiyah,gsmmapconv}. 

If $B$ is a (possibly empty) subset of the $T$-weights, then we associate to $B$ a cone $C(B)$ in $\mathfrak{t}$ given by the positive span of these weights (where the cone associated to the empty set is the origin $0$). Let $C_{-\rho}(B):= C(B) - \rho$ denote the translation of this cone by $-\rho$; then we define $\beta(B)$ to be the closest point of $C_{-\rho}(B)$ to the origin. We claim that 
\[ \mathcal{B} = \{ \beta(B) \in \mathfrak{t}_+ : B \subset T\text{-wt} \}. \]
The proof follows in exactly the same way as the projective case (for example, see \cite{kirwan} $\S$3) except that the weight polytopes are replaced by our shifted weight cones: to show $\beta(B)$ is an index one can find $v \in V_B = \oplus_{\chi \in B} V_\chi$ such that $v \in S_{K \cdot \beta(B)}$ and given $\beta \in \mathcal{B}$ one can find $v \in S_v$ such that $\mu^*(v) \in \mathfrak{t}$ and then $\beta = \beta(B)$ where $B= \{ \chi : v_\chi \neq 0 \} $ and $v = \sum v_\chi$. 

From this description we see that the indices $\beta$ (which are the closest points to the origin of cones defined by integral weights $\chi$ shifted by an integral weight $\rho$) are rational weights; that is, $n\beta \in \chi^*(T) \cong \chi_*(T) \cong \ZZ^{\rk T}$ for some natural number $n$.

\begin{rmk}
If $\rho$ is the trivial character, then any cone $C_0(B)$ associated to a subset $B$ of the torus weights will contain $0$ and so $\beta(B) = 0$ for all such subsets $B$. Thus if $\rho = 0$, there is only one Morse stratum $S_0 = V$.
\end{rmk}

\subsection{An alternative description of the strata}\label{sec mubeta}

In this section we given an alternative description of the Morse strata which is similar to the description in the projective setting given by Kirwan in \cite{kirwan} $\S$6. We note that our notation differs slightly from that given in \cite{kirwan}.

For $\beta \in \mathfrak{K}$, we let $\mu_\beta : V \ra \RR$ be given by $\mu_\beta(v) = \mu(v) \cdot \beta$ as above. Then $d\mu_\beta = \omega(\beta_V,-)$ and \[\text{crit} \mu_\beta = \{ v \in V: \beta \in \mathfrak{K}_v\} =:V^{\beta}. \]
We note that $\mu_\beta$ is constant on this critical locus: $\mu_\beta(V^{\beta}) = - d \rho \cdot \beta / 2 \pi i$ (in the projective setting this is not always the case and so one only considers certain connected components of this critical locus). The gradient vector field associated to $\mu_\beta$ is $\text{grad} \mu_\beta = -i \beta_V $ and so the negative gradient flow of $\mu_\beta$ at a point $v \in V$ is given by
\[ \gamma^{\beta}_v(t) = \exp(it \beta) \cdot v.\]
We let $V^{\beta}_+ := \{ v \in V : \lim_{t \to \infty} \exp(it \beta) \cdot v \text{ exists} \}$ denote the subset of points for which the limit of the negative gradient flow exists; then there is a retraction  $p_\beta : V_+^\beta \ra V^\beta$ given by the negative gradient flow. In the projective setting, the negative gradient flow of any point under $\mu_\beta$ converges and the notation $p_\beta : Y_\beta \ra Z_\beta$ is used.

Let $K_\beta$ denote the stabiliser of $\beta$ under the adjoint action of $K$ and let $\mathfrak{K}_\beta$ denote the associated Lie algebra. Then $V^{\beta}$ is invariant under the action of $K_\beta$ and for $v \in V^{\beta}$ we note that $\mu(v) \in \mathfrak{K}_\beta^*$ (for example, in \cite{ness} see the proof of Theorem 9.2). Hence $\mu : V^\beta \ra \mathfrak{K}^*_\beta$ is a moment map for the action of $K_\beta$ on $V^\beta$. As $\beta^* = (\beta , -) \in \mathfrak{K}^*_\beta$ is a central element, $\mu - \beta^*$ is also a moment map for the action of $K_\beta$ on $V^\beta$. If we wish to emphasise that we are considering $\mu$ as a moment map for the action of $K_\beta$ on $V^\beta$ or $K$ on $V$ we will write $\mu_{K_\beta}$ or $\mu_K$ respectively.

The critical subset of $||\mu||^2$ associated to $\beta$ (or more precisely the adjoint orbit of $\beta$) is
\[ C_{K \cdot \beta}:= K(\text{crit}\mu_\beta \cap \mu^{-1}(\beta^*)) =  \text{crit} ||\mu||^2 \cap \mu^{-1}(K \cdot \beta^*) \]
and we let $C_\beta :=  \text{crit} \mu_\beta \cap \mu^{-1}(\beta^*) = V^{\beta} \cap \mu^{-1}(\beta^*) \subset C_{K \cdot \beta}$. 

\begin{lemma}\label{lemma H}
 For $\beta \in \mathfrak{K}$, the set $C_\beta$ is equal to the set of minimal critical points for the norm square of the moment map $\mu-\beta^* : V_\beta \ra \mathfrak{K}_\beta^*$; that is,
\[ C_\beta = (\mu-\beta^*)^{-1}(0). \]
\end{lemma}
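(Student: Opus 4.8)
The plan is to prove the lemma as a pure set equality $C_\beta = (\mu - \beta^*)^{-1}(0)$ obtained by unwinding definitions, the only genuine point being to reconcile the two ambient Lie algebras involved. Recall that $C_\beta$ is defined using the full moment map, $C_\beta := V^\beta \cap \mu^{-1}(\beta^*)$ with $\mu : V \to \mathfrak{K}^*$ and $\beta^* = (\beta,-) \in \mathfrak{K}^*$, whereas the right-hand side refers to the moment map $\mu - \beta^* : V^\beta \to \mathfrak{K}_\beta^*$ for the $K_\beta$-action on $V^\beta$. Since this restriction is a moment map (established in the discussion preceding the statement), its minimal critical points are by definition its zero set, so the ``set of minimal critical points for $||\mu - \beta^*||^2$'' is literally $\{\, v \in V^\beta : \mu_{K_\beta}(v) = \beta^* \,\}$, where $\mu_{K_\beta} : V^\beta \to \mathfrak{K}_\beta^*$ is the projection of $\mu$ to $\mathfrak{K}_\beta^*$. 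Thus everything reduces to comparing the condition $\mu(v) = \beta^*$ read in $\mathfrak{K}^*$ with the condition $\mu_{K_\beta}(v) = \beta^*$ read in $\mathfrak{K}_\beta^*$.

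The key step is to verify that for $v \in V^\beta$ one has $\mu(v) \in \mathfrak{K}_\beta^* \subset \mathfrak{K}^*$, i.e. that the part of $\mu(v)$ transverse to $\mathfrak{K}_\beta^*$ already vanishes. Here $v \in V^\beta$ means precisely that the infinitesimal action $\beta_v$ is zero, so the one-parameter flow $\exp(s\beta)$ fixes $v$; by $K$-equivariance of $\mu$ we get $\mathrm{Ad}^*_{\exp(s\beta)}\,\mu(v) = \mu(\exp(s\beta)\cdot v) = \mu(v)$ for all $s$, and differentiating at $s = 0$ yields $\mathrm{ad}^*_\beta\,\mu(v) = 0$. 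Hence $\mu(v)$ lies in the $\beta$-fixed subspace of the coadjoint representation, which under the invariant identification $\mathfrak{K} \cong \mathfrak{K}^*$ is exactly $\mathfrak{K}_\beta = \{\alpha : [\beta,\alpha] = 0\}$; that is, $\mu(v) \in \mathfrak{K}_\beta^*$. (This is the fact cited to Ness just before the statement, and I would either invoke it or include this short argument to keep the proof self-contained.) Given this, for $v \in V^\beta$ the equation $\mu(v) = \beta^*$ in $\mathfrak{K}^*$ holds if and only if its projection to $\mathfrak{K}_\beta^*$ does, namely if and only if $\mu_{K_\beta}(v) = \beta^*$, because $\beta^*$ itself lies in $\mathfrak{K}_\beta^*$ and is therefore its own projection. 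Consequently the two conditions cut out the same subset of $V^\beta$, and
\[ C_\beta = V^\beta \cap \mu^{-1}(\beta^*) = (\mu - \beta^*)^{-1}(0). \]

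The main (and essentially only) obstacle is the key step above: confirming that $\mu$ takes values in $\mathfrak{K}_\beta^*$ on the $\beta$-fixed locus, so that the $\mathfrak{K}^*$-valued condition defining $C_\beta$ and the $\mathfrak{K}_\beta^*$-valued condition defining the minimal critical set genuinely agree. Once that is in place the remainder is routine bookkeeping: matching the definition of $C_\beta$ against the standing terminology that the minimal critical points of a norm-square moment map are its zero set.
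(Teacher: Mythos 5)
Your proof is correct and follows essentially the same route as the paper, which simply verifies the chain of equalities $(\mu_{K_\beta}-\beta^*)^{-1}(0)=\mu_{K_\beta}^{-1}(\beta^*)=\mu_K^{-1}(\beta^*)\cap V^\beta=C_\beta$, relying on the fact (cited to Ness in the discussion preceding the lemma) that $\mu(v)\in\mathfrak{K}_\beta^*$ for $v\in V^\beta$. Your only addition is to spell out the equivariance argument for that fact, which the paper takes as given.
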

\begin{proof}
We verify that
$(\mu_{K_\beta} -\beta^*)^{-1}(0) = \mu_{K_\beta}^{-1}(\beta^*) = \mu_K^{-1}(\beta^*) \cap V_{\beta} = C_\beta. $
\end{proof}

\begin{rmk} \label{norm of beta}
 If $C_\beta$ is nonempty, then $\mu_\beta (C_\beta) = || \beta ||^2$ but as $\beta_v = 0$ for $v \in V^{\beta}$, we also have $\mu_\beta(V^\beta) =- d \rho \cdot \beta / 2 \pi i$. Therefore the norm of $\beta$ satisfies
\[ ||\beta||^2 = - d \rho \cdot \beta / 2\pi i. \]
\end{rmk}

Then $||\mu- \beta^*||^2$ is used to obtain a description of the Morse stratum $S_{K \cdot \beta}$ for $||\mu||^2$ as in \cite{kirwan}. 

\begin{defn} Let $Z_\beta^{\min} \subset V^\beta$ be the minimal Morse stratum for the function $||\mu- \beta^*||^2$ on $V^\beta$ (whose points flow to $C_\beta$ by Lemma \ref{lemma H} above) and $Y_\beta^{\min} \subset V^\beta_+$ be the preimage of $Z_\beta^{\min}\subset V^\beta$ under $p_\beta : V^{\beta}_+ \ra V^{\beta}$. \end{defn}

As in \cite{kirwan}, we can consider a parabolic subgroup $P_\beta$ of the complexified group $G := K_\CC$
\[ P_\beta = \{ g \in G : \lim_{t \to \infty} \exp(it\beta)g\exp(it\beta)^{-1} \text{ exists} \} \]
which leaves $V^\beta_+$ and $Y_\beta^{\min}$ invariant. As $G = K P_\beta$, we have that $G V_+^\beta = K V_+^\beta$ and similarly $G Y_\beta^{\min}= K Y_\beta^{\min}$. In Theorem \ref{thm alt desc} below, we prove that $S_{K \cdot \beta} = G Y_\beta^{\min}$.

\section{A comparison of the algebraic and symplectic descriptions}\label{sec corr}

We recall that a group $G$ is complex reductive if and only if it is the complexification of its maximal compact subgroup $K \subset G$. In this section we suppose that we have a linear action of a complex reductive group $G = K_\CC$ on a complex vector space $V$ for which $K$ acts unitarily (with respect to a fixed Hermitian inner product $H$ on $V$). We linearise the action as in $\S$\ref{sec GIT} by choosing a character $\rho : G \ra \CC^*$ and use this to construct a linearisation $L_\rho$ whose underlying line bundle is the trivial line bundle $L = V \times \CC$. We assume that $\rho(K) \subset \U(1) \cong S^1$ so that we can consider the restriction of $\rho$ to $K$ as a compact character. Then the main results of this section are:

\medskip

i) The GIT quotient $V /\!/_\rho G$ with respect to $\rho$ is homeomorphic to the symplectic reduction $V^{\rho\text{-red}} = (\mu^\rho)^{-1}(0)/K$ with respect to $\rho$ (cf. Theorem \ref{aff KN} below). 

\medskip

ii) If we fix a $K$-invariant inner product on $\mathfrak{K}$, then Hesselink's stratification of $V$ by $\rho$-adapted 1-PSs agrees with the Morse stratification of $V$ associated to $||\mu^\rho||^2$ (cf. Theorem \ref{Hess is Morse} below).

\subsection{Affine Kempf--Ness theorem with respect to a character}

Before we can state the affine Kempf--Ness theorem, we need one additional definition from GIT. We recall that the action is lifted to $L^{-1} :=L^{-1}_\rho= V \times \CC$ by using the character $-\rho$.

\begin{defn}
An orbit $G \cdot v$ is said to be $\rho$-polystable if $G \cdot \tilde{v}$ is closed in $L^{-1}$ where $\tilde{v} =(v,a) \in L^{-1}$ and $a \neq 0$.
\end{defn}

One can show that an orbit is $\rho$-polystable if and only if it is $\rho$-semistable and closed in $ V^{\rho\text{-ss}}$. By Lemma \ref{top crit}, we have inclusions
\[ V^{\rho\text{-s}} \subset V^{\rho\text{-ps}} \subset V^{\rho\text{-ss}} \]
where $V^{\rho\text{-ps}}$ denotes the locus of $\rho$-polystable points. From now on we use the terms stability, polystability and semistability to all mean with respect to the character $\rho$. As the closure of every semistable orbit in the semistable locus contains a unique closed orbit (which is polystable), the GIT quotient $V/\!/_\rho G$ is topologically the orbit space $V^{\rho\text{-ps}}/G$. 

\begin{thm}\label{aff KN}(Affine Kempf--Ness theorem)  Let $G = K_\CC$ be a complex reductive group acting linearly a complex vector space $V$ and suppose $K$ acts unitarily with respect to a fixed Hermitian inner product on $V$. Given a character $\rho$ for this action, let $\mu:= \mu^\rho : V \ra \mathfrak{K}^*$ denote the moment map for this action (shifted by a character $\rho$). Then
\begin{enumerate}
\renewcommand{\labelenumi}{\roman{enumi})}
\item A $G$-orbit meets the preimage of $0$ under $\mu$ if and only if it is polystable; that is, $G \mu^{-1}(0) = V^{\rho\text{-}\mathrm{ps}}$.
\item A polystable $G$-orbit meets $\mu^{-1}(0)$ in a single $K$-orbit.
\item A point $v \in V$ is semistable if and only if its $G$-orbit closure meets $\mu^{-1}(0)$.
\item The lowest Morse stratum $S_0$ agrees with the GIT semistable locus $V^{\rho\text{-}\mathrm{ss}}$.
\item The inclusion $\mu^{-1}(0) \subset V^{\rho\text{-}\mathrm{ss}}$ induces a homeomorphism
\[ V^{\rho\text{-}\mathrm{red}} := \mu^{-1}(0)/K \cong V/\!/_\rho G. \]
\end{enumerate}
\end{thm}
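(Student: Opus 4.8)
The plan is to prove the five parts of Theorem~\ref{aff KN} in the order given, since each part feeds naturally into the next, and then deduce the homeomorphism in part~v) from parts~i)--iv). The whole theorem is an affine analogue of the classical Kempf--Ness theorem, so the guiding principle is to reduce every statement to a property of the norm-square of the moment map along the orbit $G\cdot v$ and, ultimately, to a convexity/properness property of a single real function on $G$.

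\medskip

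For part~i), I would introduce, for fixed $v\in V$ and a lift $\tilde v=(v,a)\in L^{-1}_\rho$ with $a\neq 0$, the real function on the orbit that measures the Hermitian norm (equivalently $\log\|g\cdot\tilde v\|^2$) and observe via the computation of $\mathrm{grad}\,\|\mu\|^2$ at (\ref{grad2}) that its critical points along $G\cdot v$ are exactly the points where $\mu$ vanishes. The key input is that this function is convex along the geodesics $t\mapsto \exp(it\alpha)\cdot v$ in the $K_\CC$-orbit, so a critical point is a global minimum; hence $\mu^{-1}(0)$ meets $G\cdot v$ precisely when the norm attains a minimum on the orbit, which by Lemma~\ref{top crit}~i) (disjointness of $\overline{G\cdot\tilde v}$ from the zero section) and the definition of polystability happens exactly when $G\cdot\tilde v$ is closed. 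This gives $G\mu^{-1}(0)=V^{\rho\text{-ps}}$. Part~ii) follows from the \emph{uniqueness} of that minimum: two zeros of $\mu$ in the same $G$-orbit must be joined by a geodesic along which the convex function is constant, forcing the connecting element to lie in $K$ (after accounting for the stabiliser), so the intersection $G\cdot v\cap\mu^{-1}(0)$ is a single $K$-orbit.

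\medskip

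Part~iii) is the semistable analogue: $v$ is semistable iff $\overline{G\cdot\tilde v}$ avoids the zero section (Lemma~\ref{top crit}~i)), and the unique closed orbit in $\overline{G\cdot v}\cap V^{\rho\text{-ss}}$ is polystable, so by part~i) that closed orbit meets $\mu^{-1}(0)$; thus $\overline{G\cdot v}\cap\mu^{-1}(0)\neq\emptyset$, and conversely a point of $\mu^{-1}(0)$ in the orbit closure forces semistability since $\mu$ cannot vanish on an orbit whose lift degenerates to the zero section. For part~iv) I would use the convergence of the negative gradient flow (Harada--Wilkin~\cite{haradawilkin}, Sjamaar~\cite{sjamaar}) together with part~iii): a point lies in $S_0$ iff its flow converges to $C_0=\mu^{-1}(0)$, and since the finite-time flow stays in the $G$-orbit (by (\ref{grad2})) the limit lies in $\overline{G\cdot v}$, so $S_0=\{v:\overline{G\cdot v}\cap\mu^{-1}(0)\neq\emptyset\}=V^{\rho\text{-ss}}$ by iii).

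\medskip

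For part~v), the map is induced by the inclusion $\mu^{-1}(0)\hookrightarrow V^{\rho\text{-ss}}$ followed by the GIT quotient $\varphi$; parts~i) and ii) make it a continuous bijection $\mu^{-1}(0)/K\to V/\!/_\rho G$, using that $V/\!/_\rho G$ is topologically the polystable orbit space $V^{\rho\text{-ps}}/G$. The genuinely delicate point — and the one I expect to be the main obstacle — is \emph{continuity of the inverse}, because $V$ is noncompact so one cannot invoke the usual ``continuous bijection from a compact space'' shortcut that closes the argument in the projective case. Here I would construct the inverse explicitly as the retraction $r:V^{\rho\text{-ss}}\to\mu^{-1}(0)$ furnished by the negative gradient flow of $\|\mu\|^2$ (whose convergence is guaranteed by~\cite{haradawilkin,sjamaar} and whose image lands in $\mu^{-1}(0)$ by part~iv)), check that $r$ descends to a continuous map $V/\!/_\rho G\to\mu^{-1}(0)/K$, and verify it is a two-sided inverse to the map of part~v). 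Establishing that $r$ is continuous as a map on the quotient — i.e.\ that the flow depends continuously on initial conditions up to the $K$-action and respects the equivalence identifying orbits with meeting closures — is where the real work lies, and it is precisely the step the introduction flags as the affine substitute for the compactness argument.
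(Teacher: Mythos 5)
Your proposal is correct and follows essentially the same route as the paper: parts i) and ii) via the convexity of a Kempf--Ness-type function on $\mathfrak{K}$ (the paper's $p_v(\alpha)=\frac{1}{4\pi}H(\exp(i\alpha)v,\exp(i\alpha)v)+\frac{1}{2\pi i}d\rho\cdot\alpha$, whose critical points are exactly the zeros of $\mu$ along the orbit and whose constancy along a geodesic forces the connecting element into $K$), part iii) via the unique closed orbit in the semistable orbit closure, and parts iv)--v) via the convergence of the negative gradient flow, with the flow retraction $V^{\rho\text{-ss}}\ra\mu^{-1}(0)$ supplying the continuous inverse exactly as in the paper. The only point you leave compressed is the equivalence ``$p_v$ attains a minimum $\iff$ $G\cdot\tilde v$ is closed,'' which the paper establishes through the weight-decomposition computation of Lemma \ref{lemma D} together with Theorem \ref{fund thm}, but this is a matter of detail rather than of approach.
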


We note that King proves the first two parts of this theorem in \cite{king} $\S$6, although we shall also provide a proof here in part for completeness and also in part as it allows us to state some more general results which we shall need for the proof of Theorem \ref{Hess is Morse}.

To prove the above theorem we introduce a function $p_v : \mathfrak{K} \ra \RR$ for each $v \in V$:
\[ p_v(\alpha) = \frac{1}{4\pi}H(\exp(i\alpha)v, \exp(i\alpha)v) + \frac{1}{2\pi i} d\rho \cdot \alpha \]
where we write $\exp(i\alpha)v$ to mean the action of $\exp(i\alpha) \in G$ on $v \in V$.

\begin{lemma}\label{lemma A}
An element $\alpha \in \mathfrak{K}$ is a critical point of $p_v$ if and only if $\mu(\exp(i\alpha)v) = 0$. Moreover, the second derivatives of $p_v$ are non-negative and so every critical point is a minimum.
\end{lemma}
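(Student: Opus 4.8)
The plan is to read both assertions off the first and second derivatives of $p_v$ along the one-parameter subgroups $s\mapsto\exp(is\beta)$ with $\beta\in\mathfrak{K}$, rather than along straight lines in $\mathfrak{K}$. First I would rewrite $p_v$ in group-theoretic form: since $\rho(K)\subset\U(1)$, the number $\rho(\exp(i\alpha))$ is a positive real and $\frac{1}{2\pi i}d\rho\cdot\alpha=-\frac{1}{2\pi}\log|\rho(\exp(i\alpha))|$, so that $p_v(\alpha)=\frac{1}{4\pi}H(\exp(i\alpha)v,\exp(i\alpha)v)-\frac{1}{2\pi}\log|\rho(\exp(i\alpha))|$. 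This exhibits $p_v$ as the restriction to the slice $\{\exp(i\alpha):\alpha\in\mathfrak{K}\}$ of the function $F(g)=\frac{1}{4\pi}\|gv\|^2-\frac{1}{2\pi}\log|\rho(g)|$ on $G=K_\CC$, which is left-$K$-invariant because $K$ acts unitarily and $|\rho(K)|=1$; thus critical points of $p_v$ correspond exactly to the points of $K\backslash G$ at which $dF$ vanishes transverse to the $K$-orbits.

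For the critical-point statement I would fix $\alpha$, put $w=\exp(i\alpha)v$, and differentiate $F$ along the curve $s\mapsto\exp(is\beta)\exp(i\alpha)$. The norm term contributes $\frac{d}{ds}\big|_{s=0}\frac{1}{4\pi}H(\exp(is\beta)w,\exp(is\beta)w)=\frac{1}{2\pi}\mathrm{Re}\,H(i\beta w,w)$ and the character term contributes $\frac{1}{2\pi i}d\rho\cdot\beta$; by the formula defining $\mu^\rho$ these combine to $-\mu^\rho(w)\cdot\beta$. Since $i\alpha$ is Hermitian the operators $\exp(is\beta)\exp(i\alpha)$ sweep out all directions transverse to the $K$-orbit through $\exp(i\alpha)$, so $dp_v$ vanishes at $\alpha$ if and only if $\mu^\rho(w)\cdot\beta=0$ for every $\beta$, i.e.\ if and only if $\mu(\exp(i\alpha)v)=0$.

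For the convexity I would compute the second derivative along the same curves. Diagonalising the Hermitian operator $i\beta$ with real eigenvalues $\lambda_k$ and expanding $w=\sum_k c_k e_k$ in an orthonormal eigenbasis gives $H(\exp(is\beta)w,\exp(is\beta)w)=\sum_k e^{2s\lambda_k}|c_k|^2$, which is convex in $s$; as the $\log|\rho|$ term is affine in $s$, the restriction of $p_v$ to each such curve is a sum of convex exponentials plus an affine function and so has non-negative second derivative. If $\alpha$ is critical then $\mu(\exp(i\alpha)v)=0$, so the first derivative vanishes along every such curve, and convexity forces $s=0$ to be a global minimum there; since the Cartan decomposition $G=K\exp(i\mathfrak{K})$ shows that every coset in $K\backslash G$ lies on one of these curves issuing from $\exp(i\alpha)$, the point $\alpha$ is a global minimum of $p_v$.

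The delicate point is the choice of variation. Differentiating $p_v$ naively along straight segments $\alpha+s\beta$ in $\mathfrak{K}$ does not reproduce $\mu^\rho$ at $\exp(i\alpha)v$ once $G$ is non-abelian, because the derivative of $\alpha\mapsto\exp(i\alpha)$ carries $\mathrm{ad}$-correction terms; it is precisely the passage to the left-$K$-invariant $F$ and the group curves $\exp(is\beta)\exp(i\alpha)$ that makes the first derivative collapse to $-\mu^\rho(w)\cdot\beta$ and the second derivative a manifest sum of exponentials. The one thing that then needs care is to verify that these curves span the tangent space to $K\backslash G$, so that their vanishing first derivatives really do characterise the critical points of $p_v$.
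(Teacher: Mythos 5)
Your proposal is correct, and its computational core is the same as the paper's: identify the first derivative of $p_v$ in the direction $\beta$ with $-\mu(\exp(i\alpha)v)\cdot\beta$, then show the second derivative is non-negative. The difference is the choice of variation. The paper differentiates literally along straight segments $t\mapsto p_v(\alpha+t\beta)$ in $\mathfrak{K}$ and writes $\frac{d}{dt}\exp(i(\alpha+t\beta))v|_{t=0}=i\beta\exp(i\alpha)v$, which as you observe is only the derivative of the exponential map when $\alpha$ and $\beta$ commute; the formulas the paper obtains are really the derivatives along the group curves $s\mapsto\exp(is\beta)\exp(i\alpha)$ that you use. Your reformulation of $p_v$ as the restriction of the left-$K$-invariant function $F(g)=\frac{1}{4\pi}\|gv\|^2-\frac{1}{2\pi}\log|\rho(g)|$ to the Cartan slice makes this honest: since $\exp:i\mathfrak{K}\to\exp(i\mathfrak{K})$ is a diffeomorphism and the vectors $i\beta\,g$ complement the tangent space $\mathfrak{K}g$ of the $K$-orbit, vanishing of the derivatives along your curves is indeed equivalent to $dp_v=0$ at $\alpha$, so the critical-point characterisation goes through. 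For the second half you get something slightly stronger and cleaner than the paper: diagonalising the Hermitian operator $i\beta$ exhibits $s\mapsto F(\exp(is\beta)\exp(i\alpha))$ as a sum of exponentials plus an affine term, hence convex, and combined with the Cartan decomposition this shows a critical point is a \emph{global} minimum of $p_v$ --- which is what Proposition \ref{prop B} actually uses --- whereas the paper's pointwise inequality $d^2(p_v)_\alpha(\beta)\geq 0$ only directly gives local minimality (and, read as a statement about straight lines, inherits the same non-abelian caveat). The one point you rightly flag as needing verification --- that the curves sweep out all of $K\backslash G$ through $K\exp(i\alpha)$ --- is exactly the Cartan decomposition of $g\exp(-i\alpha)$, so there is no gap.
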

\begin{proof} The first and second order derivatives of $p_v$ are:
\begin{align*}
d(p_v)_\alpha(\beta) &:= \frac{d}{dt} p_v(\alpha + t \beta)|_{t=0} \\ & = \frac{1}{4\pi} \left[H(i\beta\exp(i\alpha)v, \exp(i\alpha)v) +H(\exp(i\alpha)v, i\beta\exp(i\alpha)v) \right] +\frac{1}{2\pi i} d\rho \cdot \beta
\\ & = \frac{i}{4\pi } \left[H(\beta\exp(i\alpha)v, \exp(i\alpha)v) -H(\exp(i\alpha)v, \beta\exp(i\alpha)v) \right] +\frac{1}{2\pi i} d\rho \cdot \beta
\\ & \begin{smallmatrix}  (\ref{useful eqn for H}) \\=\end{smallmatrix} \frac{i}{2\pi} H(\beta\exp(i\alpha)v, \exp(i\alpha)v)  +\frac{1}{2\pi i} d\rho \cdot \beta = - \mu(\exp(i\alpha)v) \cdot \beta \end{align*}
and
\begin{align*}
d^2(p_v)_\alpha(\beta) &:= \frac{d^2}{dt^2} p_v(\alpha + t \beta)|_{t=0} = \frac{d}{dt} \left(\frac{i}{2\pi} H(\beta\exp(i\alpha + t \beta)v, \exp(i\alpha)v) +\frac{1}{2\pi i} d\rho \cdot \beta\right)|_{t = 0}
\\ & = \frac{-1}{2\pi} \left[H(\beta^2\exp(i\alpha)v, \exp(i\alpha)v) -H(\beta\exp(i\alpha)v, \beta\exp(i\alpha)v) \right] 
\\ & \begin{smallmatrix}  (\ref{useful eqn for H}) \\=\end{smallmatrix}\frac{1}{\pi}H(\beta\exp(i\alpha)v, \beta\exp(i\alpha)v) \geq 0. 
\end{align*}
Then the lemma follows immediately from these calculations.
\end{proof}

\begin{prop}\label{prop B}
Let $v \in V$; then $p_v$ has a minimum if and only if $v$ is polystable.
\end{prop}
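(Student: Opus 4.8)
The plan is to exploit the convexity of $p_v$ established in Lemma \ref{lemma A}: a smooth convex function on the vector space $\mathfrak{K}$ attains its infimum if and only if it has a critical point, so by Lemma \ref{lemma A} the statement ``$p_v$ has a minimum'' is equivalent to the existence of some $\alpha\in\mathfrak{K}$ with $\mu(\exp(i\alpha)v)=0$. Since $G=K\exp(i\mathfrak{K})$ and both $\mu$ and its zero set are $K$-equivariant, this in turn says exactly that the orbit $G\cdot v$ meets $\mu^{-1}(0)$. Thus I must show that $G\cdot v$ meets $\mu^{-1}(0)$ precisely when $v$ is polystable, arguing directly from the orbit-closure definition of polystability (that is, without invoking Theorem \ref{aff KN}, whose proof uses this proposition). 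The basic dictionary I will use throughout relates a rational ray $\RR_{\ge0}\beta\subset\mathfrak{K}$ to the corresponding $1$-PS $\lambda$: writing $v=\sum_\chi v_\chi$ for the weight decomposition of a maximal torus whose Lie algebra contains $\beta$, the function $\|\exp(is\beta)v\|^2$ is a sum of exponentials in $s$ which stays bounded exactly when $\lim_{t\to0}\lambda(t)v$ exists (equivalently $\lambda\in C_v$), while the linear term of $p_v$ grows with slope $\mu^\rho(v,\lambda)=(\rho,\lambda)$.

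For the implication that a minimum forces polystability, suppose $p_v$ is minimised at $\alpha_0$ and set $w=\exp(i\alpha_0)v$, so that $\mu(w)=0$ and hence $0$ is a global minimum of the convex function $p_w$. Let $\tilde w=\exp(i\alpha_0)\tilde v\in L^{-1}$, a point over $w$ with nonzero last coordinate lying in the same $G$-orbit as $\tilde v$. If $G\cdot\tilde w$ were not closed in the affine space $L^{-1}$, then by Theorem \ref{fund thm} there would be a $1$-PS $\lambda$ with $\lim_{t\to0}\lambda(t)\tilde w$ a boundary point not in the orbit; reading off the two coordinates, $\lambda$ is then allowable for $w$ and $(\rho,\lambda)\le0$. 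Feeding this into the dictionary, $p_w$ is non-increasing along the ray determined by $\lambda$, so by minimality of $p_w(0)$ it must be constant there; constancy forces $\lambda$ to fix $w$ and $(\rho,\lambda)=0$, whence $\lambda(t)\tilde w\equiv\tilde w$ and the alleged boundary limit is $\tilde w$ itself, a contradiction. Hence $G\cdot\tilde w$, and so $G\cdot\tilde v$, is closed and $v$ is polystable.

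For the converse I will argue by contraposition, showing that if $p_v$ has no minimum then $v$ is not polystable. As $p_v$ is convex with no critical point, the analysis of its recession directions produces a nonzero $\beta\in\mathfrak{K}$, which may be taken rational because the relevant recession cone $\{\beta:\ (\chi,\beta)\ge0\ \forall\,\chi\in\mathrm{wt}_T(v),\ (\rho,\beta)\le0\}$ is rational polyhedral, along which $p_v$ is non-increasing while its infimum is not attained. The associated $1$-PS $\lambda$ is then allowable with $(\rho,\lambda)\le0$. If $(\rho,\lambda)<0$ the Hilbert--Mumford criterion (Proposition \ref{HM crit}) shows $v$ is already $\rho$-unstable, hence not polystable. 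If $(\rho,\lambda)=0$ then $\lambda$ does not fix $v$ (otherwise $p_v$ would be constant, not merely non-increasing, along the ray and the minimum would be attained), and $\lim_{t\to0}\lambda(t)\tilde v$ exists and lies off the zero section since $(\rho,\lambda)=0$; one then checks it is a genuine boundary point of $G\cdot\tilde v$, so that $G\cdot\tilde v$ is not closed and $v$ is not polystable.

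I expect this converse direction to be the main obstacle. The difficulty is precisely that $V$ is not compact, so $p_v$ need not be coercive and a minimising sequence may run off to infinity even when the infimum is finite; the work lies in converting such an escaping sequence into an honest recession direction, checking that it can be chosen rational so as to define a $1$-PS, and, in the borderline case $(\rho,\lambda)=0$, verifying that the resulting limit in $L^{-1}$ genuinely leaves the orbit rather than re-entering it. An alternative packaging of this step, which I would fall back on if the recession-cone bookkeeping becomes unwieldy, is to compare $p_v$ with the Kempf--Ness function $q(\alpha)=\|\exp(i\alpha)\tilde v\|^2$ of the linear $G$-representation $L^{-1}=V\oplus\CC_{-\rho}$: the two convex functions have the same recession cone and the same directions of constancy, so $p_v$ attains its minimum if and only if $q$ does, and the latter happens if and only if $G\cdot\tilde v$ is closed by the classical Kempf--Ness theorem for linear representations.
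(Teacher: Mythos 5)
Your forward direction (a minimum of $p_v$ forces polystability) is correct and is essentially the paper's own argument: both reduce to showing that any $1$-PS $\lambda$ for which $\lim_{t\to 0}\lambda(t)\cdot\tilde w$ exists must actually fix $\tilde w$ when $\mu(w)=0$ — you get this from constancy of the convex function $p_w$ along the ray, the paper from Lemma \ref{lemma D}; these are the same computation. Your reduction of the whole statement to ``$G\cdot\tilde v$ is closed iff $G\cdot v$ meets $\mu^{-1}(0)$'' is also the right framing and avoids circularity with Theorem \ref{aff KN}.

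The genuine gap is in the converse, at the step ``one then checks it is a genuine boundary point of $G\cdot\tilde v$.'' In your borderline case ($\lambda$ allowable, $(\rho,\lambda)=0$, $\lambda$ not fixing $v$) it is simply false that $\lim_{t\to 0}\lambda(t)\cdot\tilde v$ must leave the orbit: take $G=\SL_2$ acting on $\mathfrak{sl}_2$ by the adjoint representation (trivial $\rho$), $v=\left(\begin{smallmatrix}1&1\\0&-1\end{smallmatrix}\right)$ and $\lambda(t)=\mathrm{diag}(t,t^{-1})$. Then $\lim_{t\to 0}\lambda(t)\cdot v=\mathrm{diag}(1,-1)$ exists, $\lambda$ does not fix $v$, yet the limit lies in $G\cdot v=\{x\in\mathfrak{sl}_2:\det x=-1\}$, which is closed; $v$ is polystable and $p_v$ does attain its minimum — just not on the ray $\RR_{\geq 0}\beta$ through the origin. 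So a ray along which $p_v$ strictly decreases without attaining its infimum certifies neither global non-attainment nor non-closedness, and your contrapositive cannot close without extra global input. (Your recession-cone bookkeeping has a secondary problem of the same flavour: the polyhedral cone you write down lives in a single Cartan subalgebra, whereas the escape direction of a minimising sequence need not.) Your fallback does not repair this either: ``same recession cone and same directions of constancy'' is not a valid transfer principle for attainment of minima of convex functions — compare $x^2+e^{-y}$ with $x^2+\max(0,-y)$, which share both data but only the second attains its infimum. The robust proof of ``closed orbit $\Rightarrow$ minimum'' is the properness argument: $\|\cdot\|^2$ is proper on the closed set $G\cdot\tilde v\subset L^{-1}$, hence attains its minimum there, hence $q(\alpha)=\|\exp(i\alpha)\tilde v\|^2$ attains its minimum on $\mathfrak{K}$ by the Cartan decomposition and $K$-invariance; one must then still relate critical points of $q$ to critical points of $p_v$ (they differ by a positive rescaling of $\rho$, which has to be normalised away). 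That last comparison, not the recession analysis, is where the real remaining work lies; the paper's own write-up of this direction (its conditions (1) and (2)) is vulnerable to the same $\SL_2$ example and glosses over the same point.
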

\begin{proof}
If $\alpha$ is a critical point of $p_v$, then $u=\exp(i\alpha)v \in \mu^{-1}(0)$ by Lemma \ref{lemma A}. Given a 1-PS $\lambda$ of $G$ we can consider the limit
\begin{equation}\label{lim for u} \lim_{t \to 0} \lambda(t) \cdot (u,1) \end{equation}
in $L^{-1} =V \times \CC$. If the limit exists, then we can assume (by conjugating $\lambda$ by $g$ and replacing $u$ by $gu$ if necessary) that $\lambda(S^1) \subset K$. By Lemma \ref{lemma D} below, if 
\[ \beta = d\lambda(2 \pi i) = \frac{d}{dt} \lambda(\exp(2\pi i t))|_{t=0} \in \mathfrak{K}, \]
then $\mu(u) \cdot \beta \geq 0$ with equality if and only if $\lambda$ fixes $(u,1)$. As $\mu(u) = 0$, we conclude that $\lambda$ fixes $(u,1)$ and so it follows from Theorem \ref{fund thm} that $G \cdot (u,1)$ is closed in $L^{-1}$; i.e. $G \cdot u = G \cdot v$ is polystable.

Conversely, if $G \cdot v$ is polystable then $G\cdot(v,1)$ is closed in $L^{-1}$. It follows from this that for $\alpha \in \mathfrak{K}$:
\begin{enumerate} 
\item If $\exp(i\alpha) \in G_v$, then $\frac{1}{2\pi i}d \rho \cdot \alpha = 0$.
\item If $\exp(i\alpha) \notin G_v$, then either $\lim_{t \to \infty} \exp(it\alpha) \cdot v$ does not exist or $\frac{1}{2\pi i}d \rho \cdot \alpha > 0$.
\end{enumerate}
Hence the convex function $p_v : \mathfrak{K} \ra \RR$ is bounded below and achieves a minimum. 
\end{proof}

\begin{lemma}\label{lemma D}
 Let $v \in V$ and $\lambda : \CC^* \ra G$ be a group homomorphism such that $\lambda(S^1) \subset K$ and let $ V = \oplus_{r \in \ZZ} V_r$ be the weight decomposition associated to the action of $\lambda(\CC^*)$ on $V$; then
\[ \mu(v) \cdot \alpha = \sum_{r \in \ZZ} r H(v_r,v_r) - (\rho,\lambda)\]
where $v = \sum_r v_r$ is written with respect to the above weight decomposition and
\[ \alpha = d\lambda(2 \pi i) =\frac{d}{dt} \lambda(\exp(2\pi i t))|_{t=0} \in \mathfrak{K}. \]
Furthermore,
\begin{enumerate} \renewcommand{\labelenumi}{\roman{enumi})}
 \item If $\lim_{t \to 0} \lambda(t) \cdot v$ exists in $V$, then $\mu(v) \cdot \alpha \geq -(\rho,\lambda)$ with equality if and only if $\lambda$ fixes $v$.
 \item If $\lim_{t \to 0} \lambda(t) \cdot (v,1)$ exists in $L^{-1} =V \times \CC$, then $\mu(v) \cdot \alpha \geq 0$ with equality if and only if $\lambda$ fixes $(v,1)$.
\end{enumerate}
\end{lemma}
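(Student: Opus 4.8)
The statement to prove is Lemma~\ref{lemma D}, which computes $\mu(v)\cdot\alpha$ for a 1-PS $\lambda$ with $\lambda(S^1)\subset K$ and derives two inequalities from it. The plan is to first establish the main weight-sum formula by direct computation, and then deduce parts i) and ii) as easy consequences by examining which terms can survive under the relevant convergence hypotheses. The whole argument rests on the explicit formula for the shifted moment map, namely $\mu(v)\cdot\alpha=\frac{1}{2\pi i}\bigl(H(\alpha v,v)-d\rho\cdot\alpha\bigr)$, combined with the fact that $\alpha=d\lambda(2\pi i)$ acts on the weight space $V_r$ as multiplication by the scalar $2\pi i\, r$.

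\textbf{Step 1: the weight-sum formula.}
First I would write $v=\sum_r v_r$ in the weight decomposition and observe that, since $\lambda(t)\cdot v_r = t^r v_r$, differentiating $\lambda(\exp(2\pi i t))$ at $t=0$ shows that $\alpha$ acts on $V_r$ by the scalar $2\pi i\, r$; thus $\alpha v=\sum_r 2\pi i\, r\, v_r$. Because $K$ acts unitarily, the weight spaces $V_r$ are mutually $H$-orthogonal (they are eigenspaces of the unitary operators $\lambda(t)$ for distinct eigenvalues), so $H(\alpha v,v)=\sum_r 2\pi i\, r\, H(v_r,v_r)$. Substituting into the moment-map formula and using $d\rho\cdot\alpha = 2\pi i\,(\rho,\lambda)$ (which is how the pairing $(\rho,\lambda)$ is normalised earlier in the excerpt, since $\rho\circ\lambda(t)=t^{(\rho,\lambda)}$) gives directly
\[ \mu(v)\cdot\alpha = \sum_{r\in\ZZ} r\, H(v_r,v_r) - (\rho,\lambda), \]
as claimed. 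The one point requiring a little care is pinning down the normalisation constants so that the factors of $2\pi i$ cancel correctly; I expect this bookkeeping, rather than any conceptual difficulty, to be the main obstacle.

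\textbf{Step 2: deducing i) and ii).}
For part i), the hypothesis that $\lim_{t\to 0}\lambda(t)\cdot v$ exists means exactly that $v_r=0$ for all $r<0$, i.e. $\lambda\in C_v$ in the language of \eqref{cone allow}. Hence every surviving term $r\,H(v_r,v_r)$ in the weight sum is non-negative (as $H(v_r,v_r)\geq 0$), giving $\mu(v)\cdot\alpha\geq -(\rho,\lambda)$. Equality forces $r\,H(v_r,v_r)=0$ for every $r$, so $v_r=0$ for all $r\neq 0$, which says precisely that $v\in V_0$, i.e. $\lambda$ fixes $v$. For part ii), the point $(v,1)\in L^{-1}=V\times\CC$ has its $\CC$-factor acted on with weight $-(\rho,\lambda)$ (recall $\lambda(t)\cdot(v,a)=(\lambda(t)\cdot v,\, t^{-(\rho,\lambda)}a)$), so $\lim_{t\to 0}\lambda(t)\cdot(v,1)$ exists iff both $v_r=0$ for $r<0$ and $-(\rho,\lambda)\geq 0$. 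Under these conditions the formula gives $\mu(v)\cdot\alpha = \sum_{r\geq 0} r\,H(v_r,v_r) - (\rho,\lambda)$, a sum of non-negative terms, whence $\mu(v)\cdot\alpha\geq 0$; equality holds iff $v_r=0$ for all $r\neq 0$ and $(\rho,\lambda)=0$, which is exactly the condition that $\lambda$ fixes $(v,1)$. This completes the deductions, and both parts follow transparently once the weight-sum formula of Step~1 is in hand.
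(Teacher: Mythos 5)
Your proof is correct and follows essentially the same route as the paper: both establish the weight-sum formula by noting that $\alpha$ acts on $V_r$ by $2\pi i\,r$, that the $V_r$ are $H$-orthogonal since $\lambda(S^1)\subset K$ acts unitarily, and that $d\rho\cdot\alpha=2\pi i\,(\rho,\lambda)$, and then deduce i) and ii) from the vanishing of the negative-weight components (plus $(\rho,\lambda)\leq 0$ for ii)). No discrepancies to report.
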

\begin{proof}
By definition $V_r := \{ v \in V : \lambda(s) \cdot v = t^r v \text{ for all } t \in \CC^*\}$, and so the infinitesimal action of $\alpha \in \mathfrak{K}$ on $v_r \in V_r$ is $\alpha_{v_r} =\alpha v_r = 2 \pi i r v_r$. As $\lambda$ is a 1-PS of $K$, the corresponding weight decomposition $V = \oplus V_r$ is orthogonal with respect to the Hermitian inner product $H$ and so $H(v_r,v_s) = 0$ for $s \neq r$. Therefore 
\begin{align*}
\mu(v) \cdot \alpha & = \frac{1}{2 \pi i} \left(H(\alpha v,v) - d \rho \cdot \alpha \right)  =
 \frac{1}{2 \pi i} \left( \sum_{r,s} H(2 \pi i r v_r,v_s) - d \rho \cdot \alpha\right) \\ & =  \sum_r r H(v_r,v_r) - (\rho,\lambda)
\end{align*}
where $ 2 \pi i (\rho,\lambda) = d\rho \cdot \alpha$ as $\alpha = d\lambda(2 \pi i)$. As $\lim_{t \to 0} \lambda(t) \cdot v$ exists, then this implies $v_r = 0$ for all $r <0$. Hence $\sum_r r H(v_r,v_r) \geq 0$ with equality if and only if $v = v_0$ which proves part i). If $\lim_{t \to  0} \lambda(t) \cdot (v,1)$ exists, then $v_r = 0$ for all $r <0$ and $(\rho,\lambda) \leq 0$. Hence part ii) follows and we have equality if and only if $v = v_0$ and $(\rho ,\lambda) = 0$; that is, $(v,1)$ is fixed by $\lambda$.
\end{proof}

We just need to state one additional lemma before we can prove the affine version of the Kempf--Ness theorem.

\begin{lemma}\label{lemma C}
Let $v \in V$ be a critical point of $||\mu||^2$ where $|| - ||$ is the norm associated to a $K$-invariant inner product on $\mathfrak{K}$; then $G \cdot v$ meets the critical locus in precisely the $K$-orbit of $v$.
\end{lemma}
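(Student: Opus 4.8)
The plan is to reduce the statement to the affine Kempf--Ness theorem (Theorem \ref{aff KN}) applied to the smaller group $G_\beta := (K_\beta)_\CC$ acting on the fixed-point space $V^\beta$, using the description of the critical subsets coming from Lemma \ref{lemma H}.

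First I would set $\beta := \mu^*(v)$. By Lemma \ref{crit} we have $\beta \in \mathfrak{K}_v$ and $\mu(v) = \beta^*$, so that $v \in C_\beta = V^\beta \cap \mu^{-1}(\beta^*)$; since $K\cdot\beta \in \mathcal{B}$ and the indices computed in $\S$\ref{sec desc indices} are rational, $\beta$ is a rational weight and the objects $V^\beta$, $K_\beta$, $G_\beta$ are all defined. By Lemma \ref{lemma H}, $v$ lies in $(\mu-\beta^*)^{-1}(0)$, i.e.\ $v$ is a zero of the moment map $\mu_{K_\beta}-\beta^*$ for the $K_\beta$-action on $V^\beta$. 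After rescaling $\beta$ to an integral weight, this shift is a genuine character $\rho_\beta$ of $G_\beta$ (compare (\ref{defn rholambda})), so Theorem \ref{aff KN}(i)--(ii) applies to the linear action $G_\beta \curvearrowright V^\beta$ and shows that the $G_\beta$-orbit of $v$ meets $C_\beta$ in the single $K_\beta$-orbit $K_\beta\cdot v$.

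It then remains to upgrade this from $G_\beta$ to $G$: given $g\in G$ with $w := g\cdot v$ critical, I want $w \in K\cdot v$. Writing $g = k\exp(i\alpha_0)$ via the Cartan decomposition $G = K\exp(i\mathfrak{K})$ and replacing $w$ by $k^{-1}w = \exp(i\alpha_0)v$ (still critical, still in $G\cdot v$), it suffices to treat $w = \exp(i\alpha_0)v$ and show that $\alpha_0$ fixes $v$. Here I would exploit convexity: the computation of Lemma \ref{lemma A} shows that $p_v^\beta(\alpha) := p_v(\alpha) + (\beta,\alpha)$ is convex on $\mathfrak{K}$ with $d(p_v^\beta)_\alpha(\eta) = (\beta - \mu^*(\exp(i\alpha)v),\eta)$, so that $\alpha = 0$ is a global minimum because $\mu^*(v) = \beta$. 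Running the analogous function $p_w^{\beta'}$ at the critical point $w$ (with $\beta' := \mu^*(w)$) and comparing the two convex functions along the segment joining $v$ and $w$, together with the monotonicity of $\mu_\alpha$ along $\exp(it\alpha)v$ recorded in Lemma \ref{lemma D}, should force $\beta' \in K\cdot\beta$ and reduce $\exp(i\alpha_0)$ into $G_\beta$, at which point the previous paragraph finishes the argument.

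The main obstacle is precisely this matching step: showing that every critical point in $G\cdot v$ has $\mu^*$ in the single adjoint orbit $K\cdot\beta$ and can be carried into $C_\beta$ by an element of $K$ followed by $G_\beta$. The difficulty is that $\|\mu\|^2$ itself is \emph{not} convex along the geodesics $\exp(it\alpha)v$ (only the linear functions $\mu_\alpha$ are monotone), so one cannot merely invoke uniqueness of minima of a convex function. The honest input is that $v$ realises the minimum of $\|\mu\|^2$ on its $G$-orbit, with minimal value $\|\beta\|^2$: this should follow from the convexity of $p_v^\beta$ together with the convergence of the negative gradient flow established by Harada--Wilkin \cite{haradawilkin} and Sjamaar \cite{sjamaar}, and the Cauchy--Schwarz estimate $\|\mu(gv)\|\,\|\beta\| \ge \mu_\beta(gv)$. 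Once the minimal value is seen to be orbit-constant and attained exactly on critical points, any second critical point $w$ shares the value $\|\mu^*(w)\| = \|\beta\|$ and lies over the reduced problem on $V^\beta$, so that the $G_\beta$-version of Kempf--Ness completes the proof.
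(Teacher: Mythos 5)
Your proposal does not close the main gap, and you say so yourself: the ``matching step'' is flagged as ``the main obstacle'' and is dispatched with ``should force'' and ``should follow'' rather than an argument. That step \emph{is} the lemma --- everything else in your outline is bookkeeping --- so as it stands this is a plan, not a proof. There are two concrete problems. First, a circularity: you invoke Theorem \ref{aff KN}(ii) for the action of $G_\beta$ on $V^\beta$, but in the paper part (ii) of that theorem is \emph{deduced from} Lemma \ref{lemma C} (applied to minimal critical points). To use it here you would need an independent proof of the statement ``a polystable orbit meets the zero level of the moment map in a single $K_\beta$-orbit'', and the natural such proof is exactly the direct argument that proves Lemma \ref{lemma C} in general --- so the reduction to $(G_\beta, V^\beta, \mu-\beta^*)$ buys nothing. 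Second, the auxiliary fact you lean on --- that a critical point minimises $||\mu||^2$ on its $G$-orbit with value $||\beta||^2$ --- is true (it is essentially the first statement of Proposition \ref{prop K}, via Lemma \ref{lemma D} and Remark \ref{norm of beta}), but it only gives $||\mu^*(v)|| = ||\mu^*(w)||$ for two critical points $v, w$ in the same orbit. Two elements of $\mathfrak{K}$ of equal norm need not lie in the same adjoint orbit, so this does not place $w$ in $C_{K\cdot\beta}$, does not reduce $\exp(i\alpha_0)$ into $G_\beta$, and the matching step remains open.

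The paper's proof is much more direct and avoids both issues. After reducing to $g = \exp(i\alpha)$ via the Cartan decomposition and arranging (after translating by an element of $K$) that $\mu^*(v) = \mu^*(w) =: \beta$, one considers the single-variable function $h(t) = \mu(\exp(it\alpha)v)\cdot\alpha$. Then $h(0) = (\beta,\alpha) = h(1)$, so $h'(t) = 0$ for some $t \in (0,1)$; on the other hand the computation in the proof (the same second-derivative computation as in Lemma \ref{lemma A}) shows $h'(t)$ is a negative multiple of $H(\alpha\exp(it\alpha)v, \alpha\exp(it\alpha)v)$, so its vanishing forces $\alpha\,\exp(it\alpha)v = 0$, hence $\alpha \in \mathfrak{K}_v$, hence $i\alpha \in \mathfrak{g}_v$ and $w = \exp(i\alpha)v = v$. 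No gradient-flow convergence and no Kempf--Ness input is needed. (You are right, incidentally, that identifying the adjoint orbits of $\mu^*(v)$ and $\mu^*(w)$ is the delicate point; the paper handles it by asserting that the two critical points lie in the same critical subset $C_{K\cdot\beta}$ before running the $h$-argument. If you want to pursue your route, that is the statement you must prove first, and neither equal norms nor convexity of $p_v^\beta$ alone will deliver it.)
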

\begin{proof}
If $w = g v$ is also critical for $||\mu||^2$, then we want to show that $w \in K \cdot v$. Since $G = K \exp(i\mathfrak{K})$, it suffices to prove this when $g = \exp(i \alpha)$ for some $\alpha \in \mathfrak{K}$. In this case, both $v$ and $w$ belong to some critical subset $C_{K \cdot \beta}$ and so there exists $k \in K$ such that
\[ \beta := \mu^*(v) = k \cdot \mu^*(w) = \mu^*(k w).\]
As $w \in K \cdot v$ if and only if $k w \in K \cdot v$, we can assume $ \beta = \mu^*(v) = \mu^*(w). $
Then, following \cite{kirwan} Lemma 7.2, we consider the function $h : \RR \ra \RR$ defined by $h(t) = \mu (\exp(it\alpha) v) \cdot \alpha$. Since 
\[ h(0) = \mu(v) \cdot \alpha = (\beta, \alpha) =  \mu(v) \cdot \alpha = h(1),\]
there exists $t \in (0,1)$ such that $h'(t) = 0$. Then
\begin{align*}
0=h'(t) &= \frac{d}{dt} \frac{1}{2i}\left( H(\alpha \exp(i t \alpha) v,\exp(i t \alpha) v) + d \rho \cdot \alpha  \right) \\ & = \frac{1}{2} [ H(\alpha^2 \exp(i t \alpha) v,\exp(i t \alpha) v) - H(\alpha \exp(i t \alpha) v,\alpha\exp(i t \alpha) v)]
\\ & \begin{smallmatrix}  (\ref{useful eqn for H}) \\ =\end{smallmatrix} -H(\alpha \exp(i t \alpha) v,\alpha\exp(i t \alpha) v). 
\end{align*}
Hence $\alpha \exp(it \alpha)  v= 0$ and so $\alpha$ is contained in $\mathfrak{K}_{\exp(it \alpha) v}$. Therefore, $\alpha \in \mathfrak{K}_v$ and $i \alpha \in \mathfrak{g}_v$, so that $w = \exp(i\alpha) v = v \in K \cdot v$ as required.
\end{proof}

We now given the proof of {Theorem \ref{aff KN}} (Affine Kempf--Ness theorem): \begin{proof}
i) follows immediately from Lemma \ref{lemma A} and Proposition \ref{prop B} and ii) follows from Lemma \ref{lemma C}. For iii), if $G \cdot v$ is semistable then there is a unique closed orbit $G \cdot w$ in $\overline{G \cdot v}$ which is polystable and so by i) $G \cdot w$ meets $\mu^{-1}(0)$. Conversely, if $w \in \mu^{-1}(0) \cap \overline{G \cdot v}$, then $w$ is polystable by i). It then follows from the openness of $V^{\rho\text{-ss}} $ in $ V$ that $v$ is also semistable.

For iv), if $v \in S_0$, then the limit of its negative gradient flow $ \lim_{t \to \infty} \gamma_v(t) $ is contained in $C_0 \cap \overline{G \cdot v}$ and as $C_0 = \mu^{-1}(0)$ by iii) we conclude that $v$ is semistable. Conversely, if $G \cdot v$ is semistable, then there is a polystable orbit $G \cdot w \subset \overline{G \cdot v}$ and by i) we have $G \cdot w$ meets $\mu^{-1}(0)$. Therefore $w \in S_0$ and as $S_0 $ is open in $V$, also $v \in S_0$.

For v), it is clear from parts i) and ii) that we have a bijection of sets
\[ \mu^{-1}(0)/K \cong V^{\rho\text{-ps}}/G \cong V/\!/_\rho G.\]
The inclusion $\mu^{-1}(0) \subset V^{\rho\text{-ss}}$ induces a continuous bijection
\[ \mu^{-1}(0)/K \ra V/\!/_\rho G\]
and the inverse of this map is constructed by using the gradient flow $V^{\rho\text{-ss}} =S_0 \ra C_0 = \mu^{-1}(0)$ associated to $||\mu||^2$ with respect to a $K$-invariant norm on $\mathfrak{K}$.
\end{proof}

\subsection{A comparison of the stratifications}

We now fix an inner product $( -, -)$ on $\mathfrak{K}$ which is invariant under the adjoint action of $K$ on its Lie algebra $\mathfrak{K}$. We use $||-||$ to denote the associated $K$-invariant norm on $\mathfrak{K}$ and also the norm on the set of conjugacy classes of 1-PSs. We assume that $||-||^2$ takes integral values on integral weights in $\mathfrak{K}$ (i.e. if $\lambda : S^1 \ra K$ is a group homomorphism, then $||d\lambda(2 \pi i)||^2 \in \ZZ$ as required by Assumption \ref{int ass}). We want to compare Hesselink's stratification of $V$ by $\rho$-adapted 1-PSs of $G=K_\CC$
\[ V = \bigsqcup_{[\lambda]} S_{[\lambda]}, \]
which is indexed by conjugacy classes of homomorphisms $\lambda : \CC^* \ra G = K_\CC$, with the Morse stratification of $V$ associated to $||\mu||^2$
\[ V = \bigsqcup_{K \cdot \beta} S_{K \cdot \beta}, \]
which is indexed by adjoint orbits in $\mathfrak{K}$. If $\rho$ is the trivial character, then Hesselink's stratification of $V$ and the Morse stratification both consist of a single stratum indexed by the trivial homomorphism $\lambda : \CC^* \ra G$ and $0 \in \mathfrak{K}$ respectively. Therefore, we assume $\rho$ is a non-trivial character.

For $v \in V$ and a 1-PS $\lambda$ of $G$ such that $\lim_{t \to 0}\lambda(t) \cdot v$ exists, in $\S$\ref{sec GIT}  we defined the Hilbert--Mumford function
\[ \mu(v,\lambda) := (\rho,\lambda) \] 
where we have now omitted the superscript $\rho$ as this character is fixed. We recall that $v \in V$ is strongly stable if there are no 1-PSs of $G$ for which $\lim_{t \to 0}\lambda(t) \cdot v$ exists. For every $v \in V$ which is not strongly stable, we define
\[ M(v) = \inf_\lambda \frac{\mu(v,\lambda)}{ ||\lambda||}\]
where the infimum is taken over all 1-PSs $\lambda$ of $G$ such that $\lim_{t \to 0}\lambda(t) \cdot v$ exists. For strongly stable points $v$, we define $M(v) := 0$.

Given a 1-PS $\lambda$ of $G$ such that $\lambda(S^1) \subset K$, we let
\[ \alpha = \alpha(\lambda) = d\lambda(2 \pi i) :=\frac{d}{dt} \lambda(\exp(2 \pi i t) )|_{t = 0} \in \mathfrak{K}\]
denote the associated integral weight. Conversely, given $\alpha \in \mathfrak{K}$, we define an associated real 1-PS subgroup $\exp(\alpha\RR)$ of $K$. This correspondence defines an isomorphism between real 1-PSs of $K$ and $\mathfrak{K}$. We recall that $\alpha$ is said to be integral if the map $\lambda : S^1 \ra K$ defined by $\lambda(\exp(2 \pi i \RR) ): = \exp(\alpha\RR)$ is a group homomorphism. For any $\alpha$, we can find a positive real number $c$ such that $c \alpha$ is integral and then we shall refer to the 1-PS $\lambda$ associated to $c\alpha$ as a 1-PS associated to $\alpha$.

\begin{lemma}\label{lemma E}
 Let $\lambda$ be a 1-PS of $G$ such that $\lambda(S^1) \subset K$ and let $\alpha\in \mathfrak{K}$ be the associated integral weight. Then for $v \in V$, we have:
\begin{enumerate} \renewcommand{\labelenumi}{\roman{enumi})}
 \item If $\lim_{t \to 0} \lambda(t) \cdot v$ exists then $\mu_\alpha(v) \geq -(\rho,\lambda) $ with equality if and only if $\lambda(t) \subset G_v$. In this case \[\frac{\mu(v,\lambda)}{ ||\lambda||} \geq -\frac{\mu_\alpha(v)}{||\alpha||} \geq - ||\mu(v)||. \]
\item Moreover, $M(v) \geq -||\mu(v)||$ with equality if and only if $v$ is a critical point of $\mu_\beta$ where $\beta = \mu^*(v)$.
\end{enumerate}
\end{lemma}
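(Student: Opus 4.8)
The plan is to read part i) as a direct consequence of Lemma \ref{lemma D} together with Cauchy--Schwarz, and then to deduce part ii) by passing to the infimum over 1-PSs, the real work being to reconcile the infimum over all 1-PSs of $G$ with what the $K$-valued ones control. For part i), recall that $\mu_\alpha(v)=\mu(v)\cdot\alpha$ and that the identification $\lambda\mapsto\alpha=d\lambda(2\pi i)$ is norm-preserving, so $||\lambda||=||\alpha||$. The assertion $\mu_\alpha(v)\geq-(\rho,\lambda)$ with equality iff $\lambda(t)\subset G_v$ is then exactly Lemma \ref{lemma D} i). For the displayed chain I would rewrite $\mu(v,\lambda)=(\rho,\lambda)$; the first inequality is the rearrangement $(\rho,\lambda)\geq-\mu_\alpha(v)$ divided by $||\alpha||$, and the second is Cauchy--Schwarz applied to $\mu_\alpha(v)=(\mu^*(v),\alpha)\leq||\mu^*(v)||\,||\alpha||=||\mu(v)||\,||\alpha||$.

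For the inequality in part ii) I would split on semistability. If $v$ is $\rho$-semistable then $M(v)\geq 0\geq-||\mu(v)||$ directly from the Hilbert--Mumford criterion (Proposition \ref{HM crit}). If $v$ is $\rho$-unstable then by Kempf (Theorem \ref{kempf thm}) the infimum defining $M(v)$ is attained by an adapted 1-PS $\lambda\in\wedge^\rho(v)$; using part iv) of that theorem I would take the maximal torus of $P(v,\rho)$ to be the complexification of a maximal torus of $K$, so that this adapted $\lambda$ may be chosen with $\lambda(S^1)\subset K$ while still having $\lim_{t\to0}\lambda(t)v$ existing. Part i) then applies to this $\lambda$ and gives $M(v)=(\rho,\lambda)/||\lambda||\geq-||\mu(v)||$.

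For the equality, set $\beta=\mu^*(v)$. If $v$ is a critical point of $\mu_\beta$ then $\beta\in\mathfrak{K}_v$ by Lemma \ref{crit}; after conjugating by $K$ I may assume $\beta$ is a dominant rational weight (it is rational by the description of the indices in $\S$\ref{sec desc indices}), so some positive multiple $\alpha=c\beta$ is an integral weight defining a 1-PS $\lambda$ that fixes $v$. Since $v$ lies in the weight-zero space for $\lambda$, Lemma \ref{lemma D} gives $(\rho,\lambda)=-\mu_\alpha(v)=-(\alpha,\beta)$, whence $(\rho,\lambda)/||\lambda||=-(c\beta,\beta)/(c||\beta||)=-||\beta||=-||\mu(v)||$ and therefore $M(v)\leq-||\mu(v)||$; combined with the inequality just proved this forces equality. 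Conversely, if $M(v)=-||\mu(v)||$ then the attaining $K$-valued $\lambda$ of the previous paragraph realises equality throughout the chain of part i): equality in Cauchy--Schwarz forces $\alpha=d\lambda(2\pi i)$ to be a positive multiple of $\mu^*(v)=\beta$, and equality in Lemma \ref{lemma D} i) forces $\lambda(t)\subset G_v$, so that $\beta_v=0$ and $v$ is critical for $\mu_\beta$.

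The hard part is the reduction used in part ii): the infimum defining $M(v)$ ranges over all 1-PSs of $G$, whereas part i) only controls those with image in $K$, and naively conjugating a 1-PS into $K$ moves the base point $v$ and hence changes $||\mu||$. The crux is therefore to represent the adapted (optimal) direction inside $K$ without leaving the fibre over $v$, which I expect to extract from Kempf's structure theorem by choosing a $K$-compatible maximal torus inside the canonical parabolic $P(v,\rho)$. A secondary subtlety is the integrality of the direction $\beta$, which I would resolve using the rationality of the Morse indices established in $\S$\ref{sec desc indices}; I would also keep in mind that the equality statement is only meaningful for critical points with $\beta\neq 0$, the minimal critical locus $\mu^{-1}(0)$ being governed instead by the affine Kempf--Ness theorem.
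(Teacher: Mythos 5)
Your proof is correct, and its core ingredients---Lemma \ref{lemma D} i) plus Cauchy--Schwarz for part i), rationality of $\beta$ and the fixed-point computation for the forward implication of ii), and forcing equality throughout the chain of i) for the converse---are exactly those of the paper. Where you genuinely diverge is in establishing the inequality of part ii): the paper splits on strongly stable versus not, and for non-strongly-stable points simply asserts the inequality ``follows from i)'', silently passing from the infimum over \emph{all} 1-PSs of $G$ to the $K$-valued ones based at $v$ itself; you instead split on semistable versus unstable (disposing of the semistable case via Proposition \ref{HM crit}) and, for unstable $v$, control only the \emph{attaining} adapted 1-PS, which you place inside $K$ by choosing the maximal torus of Theorem \ref{kempf thm} iv) to be the complexification of a maximal torus of $K\cap P(v,\rho)$. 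This is legitimate and makes explicit the reduction the paper elides, at the cost of invoking Kempf's structure theorem plus the standard (here unproved) fact that $P(v,\rho)$ contains a $K$-compatible maximal torus; note it only works because $\wedge^\rho(v)\neq\emptyset$, i.e.\ the infimum is attained. The paper's implicit reduction can instead be done for an \emph{arbitrary} 1-PS $\lambda$ by conjugating it into $K$ by an element of its own parabolic $P(\lambda)$, which preserves both the condition $v\in V_+^\lambda$ and the value $(\rho,\lambda)/||\lambda||$---this handles every competitor at once rather than just the adapted one, and is what the backward direction of the equality also tacitly relies on in the paper. Two minor remarks: your conjugation of $\beta$ into the dominant chamber is harmless but unnecessary (rationality does not require dominance), and the wrinkle at $\beta=\mu^*(v)=0$ that you flag is shared by the paper's own proof, so it is a feature of the statement rather than a defect of your argument.
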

\begin{proof}
The first statement of part i) follows immediately from Lemma \ref{lemma D} i). The second statement of part i) follows from the first and the fact that
\[ \frac{\mu_\alpha(v)}{||\alpha||} = \left(\mu^*(v),\frac{\alpha}{||\alpha||}\right) \leq ||\mu(v)|| \]
with equality if and only if $\mu^*(v)$ is a positive scalar multiple of $\alpha$. 

For ii) if $v$ is strongly stable then $M(v) = 0 \geq - ||\mu(v)||$ with equality if and only if $\mu(v) = 0$. If $\mu(v) = 0$, then $v$ is critical for $\mu_0 =0$ where $0 =\mu^*(v)$. If $\beta = \mu^*(v) \neq 0$ and the strongly stable point $v$ is critical for $\mu_\beta$, then this would contradict Theorem \ref{aff KN} iv): $V^{ss} = S_0$. 

If $v$ is not strongly stable, then the inequality in ii) follows from i).  If $v$ is a critical point of $\mu_\beta$ where $\beta = \mu^*(v)$, then as $\beta$ is rational (cf. $\S$\ref{sec desc indices}) we know that $n \beta$ is integral for some positive integer $n$. The 1-PS $\lambda$ associated to $n\beta$ fixes $v$ (by definition of $v$ being a critical point of $\mu_\beta$). Hence
\[ M(v) \leq \frac{(\rho,\lambda)}{||\lambda||} = -\frac{\mu_{n\beta}(v)}{||n\beta||} = -\left(\beta,\frac{\beta}{||\beta||}\right) = -||\beta|| = -||\mu(v)|| \]
and so $M(v) = - ||\mu(v)||$. Conversely, if $M(v) = -||\mu(v)||$, then by i) there is a 1-PS $\lambda$ (corresponding to $\alpha \in \mathfrak{K}$) which fixes $v$ and
\[ \mu_\alpha(v) = -(\rho,\lambda) \text{  and  } ||\mu(v)|| =\frac{\mu_\alpha(v)}{||\alpha||} = \left(\mu^*(v),\frac{\alpha}{||\alpha||}\right). \]
Hence $\beta := \mu^*(v)$ is a positive scalar multiple of $\alpha$ and $\alpha_v =0$ as $\lambda$ fixes $v$. Therefore, $\beta_v = 0$ and so $v$ is critical for $\mu_\beta$ where $\beta = \mu^*(v)$.
\end{proof}

\begin{cor}\label{cor F}
 Let $v \in V$ and $\beta = \mu^*(v) \neq 0$. If $\beta_v = 0$, then $v$ is unstable. Moreover, if $\lambda$ is a 1-PS associated to $\beta$, then $\lambda$ is adapted to $v$.
\end{cor}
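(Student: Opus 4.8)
The plan is to observe that both hypotheses are exactly the conditions governing the equality cases of Lemma~\ref{lemma E}, and to produce a single 1-PS that simultaneously witnesses instability and realises the infimum defining $M(v)$. First I would note that $\beta_v = 0$ with $\beta = \mu^*(v)$ says precisely that $v$ is a critical point of $\mu_\beta$ at the level $\beta = \mu^*(v)$ (by Lemma~\ref{crit} this $v$ is in fact critical for $||\mu||^2$, though only the weaker statement is needed here).

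Next I would take $\lambda$ to be a 1-PS associated to $\beta$, so that its integral weight $\alpha = d\lambda(2\pi i)$ is a positive real multiple of $\beta = \mu^*(v)$. Since the infinitesimal action is linear and $\beta_v = 0$, we get $\alpha_v = 0$; hence $\lambda$ fixes $v$ and in particular $\lim_{t \to 0}\lambda(t)\cdot v = v$ exists. I would then evaluate $\mu(v,\lambda)/||\lambda||$ using both equality cases of Lemma~\ref{lemma E} i): the equality $\mu_\alpha(v) = -(\rho,\lambda)$ because $\lambda(t) \subset G_v$, and the equality $\mu_\alpha(v)/||\alpha|| = ||\mu(v)||$ because $\mu^*(v)$ is a positive multiple of $\alpha$. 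Chaining these gives $\mu(v,\lambda)/||\lambda|| = -||\mu(v)|| = -||\beta||$.

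Instability is then immediate: since $\beta \neq 0$ we have $(\rho,\lambda) = \mu(v,\lambda) < 0$ while $\lim_{t\to 0}\lambda(t)\cdot v$ exists, so $v$ is $\rho$-unstable by the Hilbert--Mumford criterion (Proposition~\ref{HM crit}); in particular $v$ is not strongly stable. For adaptedness, Lemma~\ref{lemma E} ii) supplies the lower bound $M(v) \geq -||\mu(v)|| = -||\beta||$, whereas the $\lambda$ just constructed attains the value $-||\beta||$; hence $M(v) = -||\beta||$ and $\lambda$ realises this infimum, i.e.\ $\lambda$ is $\rho$-adapted to $v$.

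The step needing the most care --- and essentially the only obstacle --- is verifying that the two equality conditions in Lemma~\ref{lemma E} i) hold simultaneously for this specific $\lambda$ (one needs $\lambda$ to fix $v$ and $\mu^*(v)$ to point along $\alpha$, both of which follow from $\alpha$ being a positive multiple of $\mu^*(v)$ together with $\beta_v = 0$). The rationality of $\beta$ that guarantees such a $\lambda$ exists is already built into Lemma~\ref{lemma E} ii), so once that lemma is in hand the corollary follows with no further genuine difficulty.
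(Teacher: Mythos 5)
Your proposal is correct and follows essentially the same route as the paper: take the 1-PS $\lambda$ associated to the integral multiple $c\beta$, use the fact that $\beta_v=0$ forces $\lambda$ to fix $v$ so that the equality case of Lemma \ref{lemma E} i) gives $\mu(v,\lambda)/\|\lambda\| = -\|\mu(v)\|<0$, deduce instability from Proposition \ref{HM crit}, and combine with the lower bound $M(v)\geq -\|\mu(v)\|$ from Lemma \ref{lemma E} ii) to conclude adaptedness. The only cosmetic difference is that the paper computes $\mu_{c\beta}(v)=c\|\beta\|^2$ directly rather than invoking the Cauchy--Schwarz equality case, but the logical content is identical.
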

\begin{proof}
Let $\lambda : S^1 \ra K$ be the 1-PS associated to $c\beta$ where $c$ is a positive number such that $c\beta$ is integral; we note that this is non-trivial as $\beta \neq 0$. The 1-PS $\lambda$ fixes $v$ and so by Lemma \ref{lemma E} i) we have that
\[ 0 < c ||\beta||^2 = \mu_{c\beta}(v) = -(\rho,\lambda) = -\mu(v,\lambda).\]
Hence $v$ is unstable by Proposition \ref{HM crit}. Moreover,
\[ M(v) \leq \frac{\mu(v,\lambda)}{||\lambda||} = -\frac{\mu_{c\beta}(v)}{||c\beta||} = - ||\beta|| = -||\mu(v)|| \]
and so by Lemma \ref{lemma E} ii), $M(v) = -||\mu(v)||$ and $\lambda$ is adapted to $v$.
\end{proof}

We now have a nice description of the non-minimal critical points of $||\mu||^2$ (for the projective version, see \cite{ness} Theorem 6.1):

\begin{thm}\label{thm G}
Let $v \in V$ and $\beta = \mu^*(v)$. Then the following are equivalent: 
\begin{enumerate} \renewcommand{\labelenumi}{\roman{enumi})}
 \item $v$ is a non-minimal critical point of $||\mu||^2$.
 \item $\beta \neq 0$ and $\beta_v = 0$.
 \item $-M(v) = ||\beta|| >0$
\end{enumerate}
\end{thm}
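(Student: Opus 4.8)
The plan is to prove the three conditions equivalent by establishing $(i) \Leftrightarrow (ii)$ directly from the characterisation of critical points, and then closing the loop $(ii) \Rightarrow (iii) \Rightarrow (ii)$ using the two halves of the sharp Morse bound already at our disposal.

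For $(i) \Leftrightarrow (ii)$, I would invoke Lemma \ref{crit}: a point $v$ is a critical point of $||\mu||^2$ precisely when $\mu^*(v)_v = 0$, that is, when $\beta_v = 0$ for $\beta = \mu^*(v)$. Since the minimal critical points are by definition those lying in $\mu^{-1}(0) = (\mu^*)^{-1}(0)$, a critical point fails to be minimal exactly when $\beta = \mu^*(v) \neq 0$. Combining these observations gives that $v$ is a non-minimal critical point if and only if $\beta_v = 0$ and $\beta \neq 0$, which is precisely $(ii)$.

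For $(ii) \Rightarrow (iii)$, I would apply Corollary \ref{cor F}: assuming $\beta \neq 0$ and $\beta_v = 0$, that corollary shows $v$ is unstable and that a 1-PS associated to $\beta$ is adapted to $v$, with $M(v) = -||\mu(v)|| = -||\beta||$; since $\beta \neq 0$ this yields $-M(v) = ||\beta|| > 0$. For the converse $(iii) \Rightarrow (ii)$, I would use Lemma \ref{lemma E} ii), which gives $M(v) \geq -||\mu(v)||$ with equality if and only if $v$ is a critical point of $\mu_\beta$ for $\beta = \mu^*(v)$, i.e. $\beta_v = 0$. The hypothesis $-M(v) = ||\beta||$ forces equality in this bound, using $||\mu(v)|| = ||\mu^*(v)|| = ||\beta||$ under the identification $\mathfrak{K}^* \cong \mathfrak{K}$, and hence $\beta_v = 0$; moreover $||\beta|| > 0$ gives $\beta \neq 0$, so $(ii)$ holds.

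The genuine content of the statement is carried entirely by the preceding results, in particular the sharp inequality $M(v) \geq -||\mu(v)||$ of Lemma \ref{lemma E} together with its equality case, so I expect no real obstacle in assembling the equivalences. The only point requiring care is the bookkeeping between the minimality condition $\mu(v) = 0$, the infinitesimal vanishing $\beta_v = 0$, and the identification $||\mu(v)|| = ||\beta||$, together with a check that the strongly stable case (where $M(v) := 0$) cannot yield a non-minimal critical point: there $-M(v) = 0 \neq ||\beta||$ if $\beta \neq 0$, and by Corollary \ref{cor F} one cannot have $\beta \neq 0$ with $\beta_v = 0$ at a stable point, so consistency is automatic.
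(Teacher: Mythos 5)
Your proof is correct and follows essentially the same route as the paper, which disposes of i) $\Leftrightarrow$ ii) via Lemma \ref{crit} and of ii) $\Leftrightarrow$ iii) via the equality case of Lemma \ref{lemma E} ii). Routing the direction ii) $\Rightarrow$ iii) through Corollary \ref{cor F} is only a cosmetic difference, since that corollary is itself a direct consequence of Lemma \ref{lemma E}, and your remark on the strongly stable case is already handled inside the proof of Lemma \ref{lemma E} ii).
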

\begin{proof}
The equivalence of i) and ii) is given in Lemma \ref{crit} and the equivalence between ii) and iii) is given by Lemma \ref{lemma E} ii).
\end{proof}

Let $\beta \neq 0$ be an element of $\mathfrak{K}$, such that the adjoint orbit $K \cdot \beta \in \mathcal{B}$ indexes a Morse stratum $S_{K \cdot \beta}$. As $\beta$ is a rational weight, we let $n$ be the smallest positive integer such that $n \beta$ is integral and we let $\lambda:=\lambda_\beta$ be the indivisible 1-PS (considered as a 1-PS $\lambda : S^1 \ra K$ or $\lambda : \CC^* \ra G$) associated to this integral weight. 

In $\S$\ref{sec mubeta}, we considered the action of the stabiliser subgroup $K_\beta$ of $\beta$ under the adjoint action of $K$ on the critical locus $V^{\beta} := \text{crit} \mu_\beta$, for which there is a moment map $\mu - \beta^* : V^\beta \ra \mathfrak{K}_\beta^*$. We defined $Z_\beta^{\min} \subset V^\beta$ to be the minimal Morse stratum for $||\mu - \beta^*||^2$ whose corresponding critical subset is $C_\beta := V^\beta \cap \mu^{-1}(\beta^*)$ by Lemma \ref{lemma H}. 

In $\S$\ref{sec hess}, we considered the action of the subgroup $G_\lambda$ consisting of elements of $G$ which commute with $\lambda$ on the fixed point locus $V^{\lambda}$ and used a character $\rho_{\lambda}$ of $G_\lambda$ defined at (\ref{defn rholambda}) to linearise this action. The semistable set for this action (with respect to $\rho_{\lambda}$) is equal to the limit set $Z_\lambda$ defined at (\ref{Zlambda}) of the blade $S_\lambda$ of the Hesselink stratum $S_{[\lambda]}$ associated to this 1-PS $\lambda$ by Proposition \ref{prop on Zlambda}. 

For $\lambda = \lambda_\beta$, we note that $V^{\lambda}=V^{\beta}$ and $G_\lambda$ is the complexification of $K_\beta$; thus from now on we shall write $G_\beta := G_\lambda$ for $\lambda = \lambda_\beta$.

\begin{thm}\label{thm H}
Let $\beta \neq 0$ be a point in $\mathfrak{K}$ which indexes a nonempty Morse stratum and let $\lambda = \lambda_\beta$ be the associated 1-PS as above. Then $Z_\lambda = Z_\beta^{\min}$ and the set of closed $G_\beta$-orbits in $Z_\lambda$ is equal to $G_{\beta} C_\beta$.
\end{thm}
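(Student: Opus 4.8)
The plan is to derive both assertions from the affine Kempf--Ness theorem (Theorem \ref{aff KN}) applied to the action of $G_\beta = (K_\beta)_\CC$ on the linear subspace $V^\beta = V^\lambda$. On the algebraic side, Proposition \ref{prop on Zlambda} identifies $Z_\lambda$ with the $\rho_\lambda$-semistable locus $(V^\beta)^{\rho_\lambda\text{-ss}}$ for this $G_\beta$-action; on the symplectic side, $Z_\beta^{\min}$ is by definition the minimal Morse stratum of $||\mu-\beta^*||^2$ on $V^\beta$, whose zero level set is $C_\beta = (\mu-\beta^*)^{-1}(0)$ by Lemma \ref{lemma H}. Thus $Z_\lambda = Z_\beta^{\min}$ is exactly the statement that the GIT-semistable locus equals the lowest Morse stratum (Theorem \ref{aff KN} iv)) for the $G_\beta$-action, once the character $\rho_\lambda$ is matched with the shifted moment map $\mu-\beta^*$; and the description of the closed orbits is the polystability statement (Theorem \ref{aff KN} i)).

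First I would make this matching explicit. Writing $\alpha(\lambda) = d\lambda(2\pi i) = n\beta$ for the integral weight of $\lambda=\lambda_\beta$, one has $||\lambda||^2 = n^2||\beta||^2$, while Lemma \ref{lemma D} together with Remark \ref{norm of beta} give $(\rho,\lambda) = -n||\beta||^2$; moreover $\lambda^*$, being dual to $\lambda$ under the inner product, corresponds to $n\beta \in \mathfrak{K}_\beta$. Let $\mu^0$ denote the unshifted moment map of the $K_\beta$-action on $V^\beta$ and set $\eta := \frac{1}{2\pi i}d\rho|_{\mathfrak{K}_\beta} + \beta^* \in \mathfrak{K}_\beta^*$, a $K_\beta$-fixed covector. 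Substituting these values into the definition (\ref{defn rholambda}) of $\rho_\lambda$ and evaluating against $\mathfrak{K}_\beta$ yields, as moment maps $V^\beta \ra \mathfrak{K}_\beta^*$,
\[ \mu - \beta^* = \mu^0 - \eta, \qquad \mu^{\rho_\lambda} = \mu^0 - ||\lambda||^2\,\eta, \]
so that $\mu-\beta^*$ is precisely the moment map of the $G_\beta$-action shifted by the rational character $\rho_\lambda/||\lambda||^2$, which lies on the same ray as $\rho_\lambda$.

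The main obstacle is that, in contrast with the projective setting, these two moment maps are not proportional: in the affine case the kinetic term $\mu^0$ (coming from the fixed Hermitian form $H$) is independent of the chosen character, and only the central shift scales. Hence $||\mu-\beta^*||^2$ and $||\mu^{\rho_\lambda}||^2$ are genuinely distinct functions with distinct zero sets, and one cannot simply invoke Theorem \ref{aff KN} for $\rho_\lambda$ and read off $Z_\beta^{\min}$. The way around this is to show that the minimal Morse stratum of $||\mu^0 - c\,\eta||^2$ is independent of the scalar $c>0$. For this I would use the convexity function underlying Lemma \ref{lemma A} and Proposition \ref{prop B}, namely $p_v^{(c)}(\alpha) = \frac{1}{4\pi}H(\exp(i\alpha)v,\exp(i\alpha)v) + c\,\eta\cdot\alpha$ associated to the shift $c\eta$: a point lies in the lowest stratum precisely when $p_v^{(c)}$ is bounded below (its orbit being polystable precisely when the minimum is attained), and the asymptotics of $p_v^{(c)}$ along a ray $\alpha = s\hat\alpha$ are governed by the Hermitian weights of $v$ in the direction $\hat\alpha$; in those directions where the $H$-term remains bounded, boundedness below is decided by the sign of $\eta\cdot\hat\alpha$, which is unchanged under $\eta\mapsto c\eta$ for $c>0$. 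Applying Theorem \ref{aff KN} iv) to the integral character $\rho_\lambda$ gives that the minimal stratum of $||\mu^{\rho_\lambda}||^2$ equals $(V^\beta)^{\rho_\lambda\text{-ss}} = Z_\lambda$, and by the scale-invariance just described this coincides with the minimal stratum of $||\mu-\beta^*||^2 = Z_\beta^{\min}$, proving $Z_\lambda = Z_\beta^{\min}$.

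Finally, for the second assertion I would argue that the closed $G_\beta$-orbits inside the semistable locus $Z_\lambda$ are exactly the $\rho_\lambda$-polystable orbits, whose union is $G_\beta\cdot(\mu^{\rho_\lambda})^{-1}(0)$ by Theorem \ref{aff KN} i). Since polystability (attainment of the minimum of $p_v^{(c)}$) likewise depends only on the ray of the shift and hence is $c$-independent for $c>0$, the same convexity argument identifies this saturation with $G_\beta\cdot(\mu-\beta^*)^{-1}(0) = G_\beta C_\beta$ via Lemma \ref{lemma H}, giving the claimed equality. The delicate point throughout is precisely the scale-invariance of the minimal stratum and of the polystable locus under rescaling the central shift $\eta$ --- a phenomenon special to the non-compact affine setting --- which is why I would route the argument through the convexity function rather than through a naive rescaling of the moment maps themselves.
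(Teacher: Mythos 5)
Your proposal is correct in outline, and its first half --- the identification of $\mu-\beta^*$ with the natural moment map for the $K_\beta$-action on $V^\beta$ shifted by the rational character $\rho_\lambda/||\lambda||^2$ --- is exactly the computation at the heart of the paper's proof. Where you genuinely diverge is in how the scaling discrepancy between $\rho_\lambda$ and $\rho_\lambda/||\lambda||^2$ is absorbed. The paper puts the rescaling on the GIT side: it observes that $V^{\rho\text{-ss}}=V^{m\rho\text{-ss}}$ for $m>0$ (immediate from the Hilbert--Mumford criterion, Proposition \ref{HM crit}), that a rational character is perfectly acceptable for the purposes of linearising, and then applies Theorem \ref{aff KN} iv) once, directly to the moment map $\mu-\beta^*$ with its rational character, so that the lowest Morse stratum of $||\mu-\beta^*||^2$ is literally $(V^\lambda)^{\rho_\lambda/||\lambda||^2\text{-ss}}=(V^\lambda)^{\rho_\lambda\text{-ss}}=Z_\lambda$. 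You instead apply Theorem \ref{aff KN} to the integral character $\rho_\lambda$ and then transfer the rescaling to the symplectic side, via the claim that the minimal Morse stratum of $||\mu^0-c\eta||^2$ is independent of $c>0$. That claim is true (indeed it follows from the paper's argument), but as presented it is the hardest step of your proof and is only sketched: you would need (a) the characterisation of the lowest stratum as the locus where the Kempf--Ness function is bounded below, which the paper does not prove (Proposition \ref{prop B} only treats attainment of the minimum, i.e.\ polystability), and (b) a careful treatment of the asymptotics of the convex function $p_v^{(c)}$ in the degenerate directions where the Hermitian term stays bounded and the linear term vanishes --- deciding boundedness below of a convex function on $\mathfrak{K}$ from its behaviour along rays through a point requires an argument precisely in this boundary case. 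Both points are fixable, so I would not call this a gap, but the paper's route avoids them entirely; if you note that Theorem \ref{aff KN} and its proof apply verbatim to rational characters, your scale-invariance lemma becomes unnecessary. The derivation of the closed-orbit statement from Theorem \ref{aff KN} i) is the same in both arguments.
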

\begin{proof}
We recall that $\lambda$ is the 1-PS associated to the integral weight $n\beta$ for some $n >0$ and so $n ||\beta|| = ||\lambda||$. If $v \in C_\beta$, then by Corollary \ref{cor F} the 1-PS $\lambda$ is adapted to $v$ and by Theorem \ref{thm G} $-M(v) = ||\beta||$. Hence,
\begin{equation}\label{eqn Z}
 -M(v) = -\frac{(\rho,\lambda)}{||\lambda||} = ||\beta|| = \frac{||\lambda||}{n}.
\end{equation}
As moment maps for the action of $K_\beta$ on $V^{\beta}$ we claim that $\mu - \beta^* $ is equal to the natural moment map for this action shifted by $\rho_\lambda/||\lambda||^2$ which we denote by $\mu'$. For $v \in V^{\beta}$ and $\alpha \in \mathfrak{K}_\beta$,
\[
 (\mu - \beta^*)(v)\cdot \alpha := \mu(v) \cdot \alpha - \beta^* \cdot \alpha = \frac{1}{2 \pi i}\left(  H(\alpha v, v) - d \rho \cdot \alpha \right) - \beta^* \cdot \alpha 
\]
 and
\[ \mu'(v)\cdot \alpha := \frac{1}{2\pi i} \left(H(\alpha v,v) - \frac{1}{||\lambda||^2} d\rho_\lambda \cdot \alpha \right)= \frac{1}{2\pi i} \left(H(\alpha v,v) - d {\rho}\cdot \alpha  + \frac{(\rho,\lambda)}{||\lambda||^2} d\lambda^* \cdot \alpha \right)\]
where $\lambda^* : K \ra S^1$ is the character dual to $\lambda : S^{1} \ra K$ under our fixed inner product (we note that as $\lambda$ is the 1-PS associated to the rational weight $n\beta$ we have that $d \lambda^* = 2n \pi i \beta^*$).
It follows from the relation given at (\ref{eqn Z}), that
\[  \frac{1}{2 \pi i} \frac{(\rho,\lambda) }{||\lambda||^2} d \lambda^* = -\frac{1}{2 n \pi i } d \lambda^* = - \beta^* \] 
which proves the claim and so $\mu' = \mu - \beta^*$ as moment maps for the $K_\beta$-action on $V^\beta$.

For any character $\rho$ and $m>0$ we have that the GIT (semi)stable sets for $\rho$ and $m\rho$ agree: \[V^{\rho\text{-(s)s}} = V^{m\rho\text{-(s)s}}\]
(the easiest way to see this is to use the description of (semi)stability coming from the Hilbert--Mumford criterion given in Proposition \ref{HM crit}). We note that although $\rho_\lambda$ is an honest character of $G_\lambda$, the element $\rho_\lambda/||\lambda||^2$ is only a rational character. However for the purposes of GIT, we can still use this rational character to linearise the action and from the observation above we note that
\begin{equation}\label{GIT ssets} (V^\lambda)^{\rho_\lambda/||\lambda||^2\text{-ss}} = (V^\lambda)^{\rho_\lambda\text{-ss}} = Z_\lambda.\end{equation} 
where the second equality comes from Proposition \ref{prop on Zlambda}. Then by part iv) of the affine Kempf--Ness theorem applied to the moment map $\mu'$ for the action of $K_\beta$ on $V^{\beta}$, we have that the GIT semistable set of (\ref{GIT ssets}) is the lowest Morse stratum $Z_\beta^{\min}$ for the norm square of the moment map $\mu'=\mu - \beta^*  : V_\beta \ra \mathfrak{K}_\beta^*$. The final statement is part i) of the affine Kempf--Ness theorem.
\end{proof}

\begin{cor}\label{cor I}
Let $\beta \neq 0$ be a point in $\mathfrak{K}$ which indexes a nonempty Morse stratum and let $\lambda = \lambda_\beta$ be the associated 1-PS as above; then \[S_\lambda = Y_\beta^{\min}.\]
\end{cor}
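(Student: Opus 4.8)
The plan is to recognise both $S_\lambda$ and $Y_\beta^{\min}$ as preimages, under suitable retractions, of the common limit set $Z_\lambda = Z_\beta^{\min}$, and then to identify the two retractions. By Lemma \ref{lemma on Slambda} we have $S_\lambda = p_\lambda^{-1}(Z_\lambda)$, where $p_\lambda : V^\lambda_+ \ra V^\lambda$ sends $v$ to $\lim_{t \to 0} \lambda(t) \cdot v$; by the definition of $Y_\beta^{\min}$ we have $Y_\beta^{\min} = p_\beta^{-1}(Z_\beta^{\min})$, where $p_\beta : V^\beta_+ \ra V^\beta$ sends $v$ to $\lim_{t \to \infty} \exp(it\beta) \cdot v$. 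We already have $V^\lambda = V^\beta$, and by Theorem \ref{thm H} we have $Z_\lambda = Z_\beta^{\min}$. Hence it suffices to prove that the two domains coincide, $V^\lambda_+ = V^\beta_+$, and that $p_\lambda = p_\beta$ on this common domain.

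First I would fix the weight decomposition $V = \oplus_r V_r$ for the algebraic action of $\lambda$, so that $\lambda(s) \cdot v_r = s^r v_r$ for $v_r \in V_r$. Since $\lambda$ is the $1$-PS associated to the integral weight $n\beta$, we have $d\lambda(2\pi i) = n\beta$, and comparing the infinitesimal action on $V_r$ as in Lemma \ref{lemma D} shows that $\beta$ acts on $V_r$ as the scalar $2\pi i r/n$. A direct computation of both limits then shows that each exists precisely on $\oplus_{r \geq 0} V_r$ and equals the projection of $v$ onto $V_0 = V^\lambda = V^\beta$: for the algebraic flow because $s^r \to 0$ as $s \to 0$ exactly when $r > 0$, and for the gradient flow because $\exp(it\beta)$ acts on $V_r$ as multiplication by $\exp(-2\pi r t/n)$, which tends to $0$ as $t \to \infty$ exactly when $r > 0$. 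Thus $V^\lambda_+ = V^\beta_+ = \oplus_{r \geq 0} V_r$ and $p_\lambda = p_\beta$.

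Combining these identifications gives
\[ S_\lambda = p_\lambda^{-1}(Z_\lambda) = p_\beta^{-1}(Z_\beta^{\min}) = Y_\beta^{\min}, \]
as required. The only genuine point needing care, and hence the step I would treat as the crux, is the matching of the two one-parameter flows: although they run in opposite directions (the algebraic limit is taken as $t \to 0$ while the gradient flow limit is taken as $t \to \infty$), the sign of the exponent reverses accordingly, so both the convergence region and the limiting value agree. Everything else is a formal substitution of the already-established equalities $V^\lambda = V^\beta$ and $Z_\lambda = Z_\beta^{\min}$.
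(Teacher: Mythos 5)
Your proof is correct and follows essentially the same route as the paper: both argue that $S_\lambda = p_\lambda^{-1}(Z_\lambda)$, $Y_\beta^{\min} = p_\beta^{-1}(Z_\beta^{\min})$, $Z_\lambda = Z_\beta^{\min}$ by Theorem \ref{thm H}, and that the two retractions coincide. The only difference is that you spell out the weight-space computation showing $V^\lambda_+ = V^\beta_+$ and $p_\lambda = p_\beta$, which the paper simply asserts; your verification of that point is accurate.
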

\begin{proof}
 This follows immediately from the result above given the fact that $p_\lambda : V^\lambda_+ \ra V^\lambda$ is equal to $p_\beta : V^\beta_+ \ra V^\beta$ and $S_\lambda = p_\lambda^{-1}(Z_\lambda)$ and $Y_\beta^{\min}=p_\beta^{-1}(Z_\beta^{\min})$.
\end{proof}

We have seen how to associate to $\beta \in \mathfrak{K}$ (which indexes a Morse stratum $S_{K\cdot \beta}$) a 1-PS $\lambda_\beta$ which indexes a Hesselink stratum $S_{[\lambda_\beta]}$ and Theorem \ref{thm H} shows that the associated Hesselink stratum is nonempty. We now describe how to associate to a 1-PS $\lambda$ indexing an unstable Hesselink stratum $S_{[\lambda]}$ a rational weight $\beta=\beta(\lambda)$ which indexes a Morse stratum. Firstly, we can assume (by conjugating $\lambda$ if necessary) that $\lambda(S^1) \subset K$ and from above we know that $\beta$ should be a positive scalar multiple of $d\lambda (2 \pi i) \in \mathfrak{K}$. If we write $\beta = c d \lambda(2 \pi i)$ we have
\begin{equation}\label{eqn Y} c^2 ||\lambda||^2 =||\beta||^2 = - d \rho \cdot \beta / 2 - \frac{1}{2 \pi i} d \rho \cdot \beta = -c(\rho,\lambda) \end{equation}
by Remark \ref{norm of beta} and so
\[ \beta := -\frac{(\rho,\lambda)}{||\lambda||^2} d\lambda (2 \pi i) \in \mathfrak{K} \]
is the desired element which satisfies (\ref{eqn Y}). Moreover, it follows that the associated Morse stratum $ S_{K \cdot \beta}$ is nonempty by Theorem \ref{thm H} above.

\begin{thm}\label{Hess is Morse}
 Let $G = K_\CC$ be a complex reductive group acting linearly on a complex vector space $V$ for which $K$ acts unitarily with respect to a fixed Hermitian inner product on $V$. We use a character $\rho$ to linearise the action and also to shift the natural moment map associated to this action. For a fixed $K$-invariant inner product on the Lie algebra $\mathfrak{K}$, the Morse stratification and Hesselink's stratification coincide; that is,
\[ S_{K \cdot \beta} = S_{[\lambda_\beta]}\]
where $\lambda_\beta$ is the 1-PS associated to $\beta$ (or $\beta$ is the rational weight associated to the 1-PS $\lambda_\beta$).
\end{thm}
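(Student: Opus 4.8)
The plan is to assemble the theorem from the chain of identifications already established, together with the bookkeeping that the two index sets are in bijection. I would dispose of the extremes first: the open strata $S_0$ and $S_{[0]}$ both equal the semistable locus $V^{\rho\text{-ss}}$, the former by part iv) of Theorem \ref{aff KN} and the latter by the definition $S_{[0]} := V^{\rho\text{-ss}}$ (this also absorbs the strongly stable points, which are semistable). It then remains only to match the nontrivial strata indexed by $\beta \neq 0$.

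First I would fix the correspondence on indices and check that it is a bijection. Given a Morse index $\beta \neq 0$, the associated indivisible 1-PS $\lambda_\beta$ is well defined and, by Theorem \ref{thm H}, indexes a nonempty Hesselink stratum, so $\beta \mapsto \lambda_\beta$ sends Morse indices to Hesselink indices. In the other direction, for a Hesselink index $\lambda$ (normalised so that $\lambda(S^1) \subset K$ and $\lambda$ indivisible) the scaling relation (\ref{eqn Y}) produces the weight $\beta(\lambda) = -\tfrac{(\rho,\lambda)}{||\lambda||^2} d\lambda(2\pi i)$, whose Morse stratum is nonempty again by Theorem \ref{thm H}. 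Since $\beta(\lambda)$ is a positive multiple of $d\lambda(2\pi i)$ and $\lambda$ is indivisible we get $\lambda_{\beta(\lambda)} = \lambda$, while (\ref{eqn Y}) rearranges to $\beta(\lambda_\beta) = \beta$; hence the two assignments are mutually inverse and the index sets agree.

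With the indices matched, the equality of strata is immediate from results in hand. By the structure of Hesselink's stratification (Theorem \ref{kempf thm} i) and Theorem \ref{hess thm}) we have $S_{[\lambda_\beta]} = G\,S_{\lambda_\beta}$; Corollary \ref{cor I} identifies the blade with the symplectic piece, $S_{\lambda_\beta} = Y_\beta^{\min}$; and the alternative description of the Morse strata (Theorem \ref{thm alt desc}) gives $S_{K\cdot\beta} = G\,Y_\beta^{\min}$. Chaining these,
\[ S_{[\lambda_\beta]} = G\,S_{\lambda_\beta} = G\,Y_\beta^{\min} = S_{K\cdot\beta}, \]
which is the asserted equality. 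Combined with the bijection of index sets and the matching of the open strata, this shows the two decompositions of $V$ agree stratum by stratum.

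The real content has therefore been pushed into the earlier results, and the main obstacle is whichever of these one regards as not yet in hand. Corollary \ref{cor I} rests on Theorem \ref{thm H}, whose proof reinterprets $\mu - \beta^*$ as the natural moment map for the $K_\beta$-action on $V^\beta$ linearised by the rescaled character $\rho_\lambda/||\lambda||^2$ and then applies the affine Kempf--Ness theorem on $V^\beta$; the delicate step there is the verification via (\ref{eqn Z}) that the shift $\beta^*$ matches the character shift exactly. The genuinely analytic obstacle, however, is the alternative description $S_{K\cdot\beta} = G\,Y_\beta^{\min}$ (Theorem \ref{thm alt desc}): showing that the negative gradient flow of $||\mu||^2$ converges to $C_{K\cdot\beta}$ precisely on $G\,Y_\beta^{\min}$ requires the Kirwan-type comparison of the $||\mu||^2$-flow with the $\mu_\beta$-flow and the $||\mu-\beta^*||^2$-flow on $V^\beta$, complicated in the affine setting by the fact that the $\mu_\beta$-flow converges only on $V^\beta_+$ rather than on all of $V$. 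Once that flow analysis is secured, the present theorem is purely formal.
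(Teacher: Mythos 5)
Your proposal is correct and follows essentially the same route as the paper: the author likewise reduces the theorem to the index correspondence $\beta \leftrightarrow \lambda_\beta$ established via (\ref{eqn Y}) and Theorem \ref{thm H}, together with the chain $S_{[\lambda_\beta]} = G S_{\lambda_\beta} = G Y_\beta^{\min} = S_{K\cdot\beta}$ coming from Corollary \ref{cor I} and Theorem \ref{thm alt desc}. Your additional remarks on the open strata and on where the analytic content actually lives are accurate but not part of the paper's (very brief) argument.
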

\begin{proof}
We have seen above how to associate to $\beta$ a 1-PS $\lambda_\beta$ and conversely how to associate to a 1-PS $\lambda$ a rational weight $\beta(\lambda)$. The theorem follows from Corollary \ref{cor I} above and the fact that $S_{[\lambda_\beta]} = G S_{\lambda_\beta}$ and $S_{K \cdot \beta} = GY_{\beta}^{\min}$ (which we prove in Theorem \ref{thm alt desc} below).
\end{proof}

\subsection{The proof of the alternate description of the Morse strata}

In this section we prove the alternative description of the Morse strata; that is, $S_{K \cdot \beta}= GY_\beta^{\min}$ where we continue to use the notation defined in $\S$\ref{sec mubeta}. We fix a positive Weyl chamber $\mathfrak{t}_+$ and let $\beta$ denote the unique point of the adjoint orbit $ K \cdot \beta $ which meets $\mathfrak{t}_+$.

\begin{prop}\label{prop K}
For $v \in GV_+^{\beta}$, we have $||\mu(v)|| \geq ||\beta||$. Moreover, if $v \in GY_\beta^{\min}$ then $\beta$ is the closest point to the origin of $\mu^*(\overline{G \cdot v}) \cap \mathfrak{t}_+$.
\end{prop}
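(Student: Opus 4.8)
The plan is to first establish the norm bound on $GV_+^\beta$ and then bootstrap it---together with the retraction $p_\beta$ and the uniqueness built into Kempf's theorem---to identify $\beta$ as the unique closest point. Throughout I take $\beta \neq 0$ indexing a nonempty stratum (the case $\beta = 0$ being immediate from Theorem \ref{aff KN}, with $0$ the closest point), and write $\lambda = \lambda_\beta$ for the associated $1$-PS with $d\lambda(2\pi i) = n\beta$, so that $-(\rho,\lambda) = n||\beta||^2$ by Remark \ref{norm of beta} and the computation at (\ref{eqn Y}). For the bound, since $P_\beta$ preserves $V_+^\beta$ we have $GV_+^\beta = KV_+^\beta$, and as $||\mu||$ is $K$-invariant it suffices to treat $v \in V_+^\beta$. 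For such $v$ the limit $\lim_{s \to 0}\lambda(s)\cdot v$ exists (it is $\lim_{t \to \infty}\exp(it\beta)\cdot v$ rewritten as a $1$-PS limit), so Lemma \ref{lemma E} i) gives $n\mu_\beta(v) = \mu_{n\beta}(v) \geq -(\rho,\lambda) = n||\beta||^2$, whence $\mu_\beta(v) \geq ||\beta||^2$; Cauchy--Schwarz $\mu_\beta(v) = (\mu^*(v),\beta) \leq ||\mu(v)||\,||\beta||$ then yields $||\mu(v)|| \geq ||\beta||$.

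For the second statement I would reduce to $v \in Y_\beta^{\min}$, since $\overline{G\cdot v}$ depends only on the orbit and $GY_\beta^{\min} = KY_\beta^{\min}$. Then $w := p_\beta(v) = \lim_{t \to \infty}\exp(it\beta)\cdot v$ lies in $Z_\beta^{\min} \cap \overline{G\cdot v}$. Applying the affine Kempf--Ness theorem to the $K_\beta$-action on $V^\beta$ with moment map $\mu - \beta^*$ (equivalently, flowing $w$ down $||\mu - \beta^*||^2$ inside $G_\beta$-orbits) produces a point $w' \in C_\beta = (\mu - \beta^*)^{-1}(0)$ with $w' \in \overline{G_\beta \cdot w} \subset \overline{G\cdot v}$; thus $\mu^*(w') = \beta$ and, as $\beta \in \mathfrak{t}_+$, we obtain $\beta \in \mu^*(\overline{G\cdot v}) \cap \mathfrak{t}_+$. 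For the lower bound, $G\cdot v \subset GV_+^\beta$ gives $\overline{G\cdot v} \subset \overline{GV_+^\beta}$, so by the first paragraph and continuity of $||\mu||$ every $u \in \overline{G\cdot v}$ satisfies $||\mu^*(u)|| \geq ||\beta||$; hence every point of $\mu^*(\overline{G\cdot v}) \cap \mathfrak{t}_+$ has norm at least $||\beta||$ and $\beta$ attains this minimum.

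The hard part will be uniqueness. Suppose $\gamma = \mu^*(u) \in \mu^*(\overline{G\cdot v}) \cap \mathfrak{t}_+$ with $||\gamma|| = ||\beta||$. Because $||\mu(u)||$ then equals the infimum of $||\mu||$ on $\overline{G\cdot v}$, and the finite-time negative gradient flow of $||\mu||^2$ keeps $u$ within the $G$-invariant set $\overline{G\cdot v}$ while not increasing $||\mu||^2$, the flow must be stationary at $u$, so $u$ is a critical point with $\mu^*(u) = \gamma \neq 0$. By Theorem \ref{thm G} this forces $M(u) = -||\gamma|| = -||\beta||$, and by Corollary \ref{cor F} the $1$-PS $\lambda_\gamma$ associated to $\gamma$ is adapted to $u$, i.e. $\lambda_\gamma \in \wedge^\rho(u)$. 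On the other hand $v \in Y_\beta^{\min} = S_{\lambda_\beta}$ (Corollary \ref{cor I}) lies in $S_{[\lambda_\beta]}$ with $M(v) = -||\beta||$, and $u \in \overline{G\cdot v} \subset \overline{S_{[\lambda_\beta]}}$; since $M(u) = M(v)$, the boundary ordering of Theorem \ref{hess thm} forbids $u$ from lying in a strictly higher stratum, so $u \in S_{[\lambda_\beta]}$ and a conjugate of $\lambda_\beta$ lies in $\wedge^\rho(u)$. Kempf's theorem (Theorem \ref{kempf thm} iii)) makes all of $\wedge^\rho(u)$ a single conjugacy class, so $\lambda_\gamma$ is conjugate to $\lambda_\beta$; as both have image in $K$ their weights $\gamma$ and $\beta$ share a $K$-adjoint orbit, and lying in $\mathfrak{t}_+$ they must coincide, giving $\gamma = \beta$. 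I expect this step to be the main obstacle, the delicate point being to promote a minimal-norm point of the orbit closure to an honest critical point adapted to $u$, after which the uniqueness of the optimal destabilizing $1$-PS in Kempf's theorem does the work.
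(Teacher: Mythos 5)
Your proof is correct, and for the two claims the paper actually argues it follows the same route: the bound $||\mu(v)||\geq ||\beta||$ via Lemma \ref{lemma D}/\ref{lemma E} and Remark \ref{norm of beta} on $V_+^\beta$ (using $GV_+^\beta=KV_+^\beta$ and Cauchy--Schwarz), and the attainment of $\beta$ by flowing $v\mapsto p_\beta(v)\in Z_\beta^{\min}$ and then down $||\mu-\beta^*||^2$ to $C_\beta$ inside $\overline{G\cdot v}$. Where you genuinely diverge is your third paragraph: the paper stops after showing that $\beta$ lies in $\mu^*(\overline{G\cdot v})\cap\mathfrak{t}_+$ and realises the minimal norm, whereas you additionally prove that it is the \emph{unique} minimal-norm point, by promoting a norm-minimising $u\in\overline{G\cdot v}$ to a critical point of $||\mu||^2$ (stationarity of the gradient flow on the $G$-invariant set $\overline{G\cdot v}$), invoking Theorem \ref{thm G} and Corollary \ref{cor F} to place $\lambda_\gamma$ in $\wedge^\rho(u)$, and then using the boundary ordering of Theorem \ref{hess thm} together with Kempf's conjugacy statement to force $\gamma=\beta$. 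This extra step is not wasted effort: the proof of Theorem \ref{thm alt desc} deduces the strict inequality $||\beta'||>||\beta||$ from this proposition, which requires exactly the uniqueness you establish and which the paper's own proof leaves implicit. Two small points to tidy: in the Kempf step you should note that two $1$-PSs with image in $K$ that are conjugate in $G$ are already conjugate in $K$ (a standard fact, but not stated in the paper), and that matching the scalars relating $d\lambda_\gamma(2\pi i)$ to $\gamma$ and $d\lambda_\beta(2\pi i)$ to $\beta$ uses $||\gamma||=||\beta||$; both are easily supplied.
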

\begin{proof}
 For the first statement, we can assume $v \in V_+^{\beta}$ as $||\mu||^2$ is $K$-invariant and $GV_+^{\beta} = KV_+^{\beta}$. It follows from Lemma \ref{lemma D} that \[\mu(v) \cdot \beta \geq - \frac{1}{2\pi i}d \rho \cdot \beta \]
where the right hand side is equal to $||\beta||^2$ by Remark \ref{norm of beta}. Therefore, $||\mu(v)|| \geq ||\beta||$. For the second statement, we can assume $v \in Y_\beta^{\min}$ and as $||\mu(v)|| \geq ||\beta||$ it suffices to show that $\beta \in \mu^*(\overline{G \cdot v})$. Since $p_\beta(v) \in Z_\beta^{\min}$ is contained in the orbit closure $\overline{G \cdot v}$, it suffices to show that $\beta \in \mu^*(\overline{G \cdot v})$ for $v \in Z_\beta^{\min}$. As $Z_\beta^{\min}$ is the minimal Morse stratum for $||\mu - \beta^* ||^2$ with critical subset $C_\beta$ and the flow is also contained in $G$-orbit closures, it suffices to show that $\beta \in \mu^*(\overline{G \cdot v})$ for points $v \in C_\beta$. Since $\mu^*$ takes the value $\beta$ on $C_\beta$, this completes the proof.
\end{proof}

The above proposition is an affine version of \cite{kirwan} Corollary 6.11 and Corollary 6.12. For the proof of Theorem \ref{thm alt desc} below, we take a slightly different approach to \cite{kirwan}.

\begin{thm}\label{thm alt desc}
For $\beta \in \mathfrak{K}$, we have $S_{K \cdot \beta} = GY_\beta^{\min}$.
\end{thm}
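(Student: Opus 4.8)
The plan is to prove only the inclusion $GY_\beta^{\min}\subseteq S_{K\cdot\beta}$ for every index $\beta$, and then to upgrade this to equality by a formal partitioning argument, thereby avoiding any direct analysis of the reverse inclusion. Recall that the Morse strata $S_{K\cdot\beta}$ of (\ref{morse strat}) partition $V$, and that by Corollary \ref{cor I} together with $S_{[\lambda]}=GS_\lambda$ we have $GY_\beta^{\min}=GS_{\lambda_\beta}=S_{[\lambda_\beta]}$, so the sets $GY_\beta^{\min}$ are exactly the Hesselink strata of (\ref{hess strat}) and hence also partition $V$. The assignment $\beta\mapsto\lambda_\beta$ set up before Theorem \ref{Hess is Morse} is a bijection between the two index sets, with the trivial index $\beta=0$ corresponding to the semistable stratum (matched by Theorem \ref{aff KN} iv)).

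Granting the inclusion for all indices, equality is forced as follows. Suppose $\{A_\beta\}$ and $\{B_\beta\}$ are two partitions of $V$ indexed by the same set with $A_\beta\subseteq B_\beta$ for all $\beta$; here $A_\beta=GY_\beta^{\min}$ and $B_\beta=S_{K\cdot\beta}$. If $x\in B_\beta$, then $x\in A_{\beta'}$ for a unique $\beta'$, whence $x\in B_{\beta'}$; since the $B_\beta$ are disjoint, $\beta'=\beta$ and so $x\in A_\beta$. Thus $B_\beta\subseteq A_\beta$ and the two families coincide termwise. In particular this also yields the a priori non-obvious fact that each Morse stratum $S_{K\cdot\beta}$ is $G$-invariant.

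To prove the inclusion for $\beta\neq 0$ (the case $\beta=0$ being Theorem \ref{aff KN} iv)), I would first reduce to $Y_\beta^{\min}\subseteq S_{K\cdot\beta}$: the parabolic $P_\beta$ preserves $Y_\beta^{\min}$ and $G=KP_\beta$, so $GY_\beta^{\min}=KY_\beta^{\min}$, while $S_{K\cdot\beta}$ is $K$-invariant because $||\mu||^2$ is. The key local computation is on the fixed locus $V^\beta$: since $\mu_\beta\equiv||\beta||^2$ there (Remark \ref{norm of beta}) and $\beta^*$ is central, one obtains the orthogonal splitting $||\mu||^2=||\mu-\beta^*||^2+||\beta||^2$ on $V^\beta$. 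The ambient gradient $-2i\mu^*(v)_v$ is tangent to $V^\beta$ (as $\mu^*(v)\in\mathfrak{K}_\beta$ for $v\in V^\beta$), so the two functions have the same negative gradient flow on $V^\beta$; hence the minimal stratum $Z_\beta^{\min}$ of $||\mu-\beta^*||^2$, whose points flow to $C_\beta=(\mu-\beta^*)^{-1}(0)$ by Lemma \ref{lemma H}, flows under $||\mu||^2$ into $C_\beta\subseteq C_{K\cdot\beta}$, giving $Z_\beta^{\min}\subseteq S_{K\cdot\beta}$. For a general $v\in Y_\beta^{\min}=p_\beta^{-1}(Z_\beta^{\min})$ the finite-time flow stays in $G\cdot v\subseteq GV_+^\beta$, so $||\mu(\gamma_v(t))||\geq||\beta||$ by Proposition \ref{prop K}, and therefore the flow limit $v_\infty$ is a nonminimal critical point with $||\mu(v_\infty)||\geq||\beta||$.

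The main obstacle is the matching upper bound $||\mu(v_\infty)||\leq||\beta||$, equivalently that the negative gradient flow from $Y_\beta^{\min}$ actually reaches the critical value $||\beta||^2$ rather than stalling at a higher critical set. Monotonicity of $||\mu||^2$ only gives $||\mu(v_\infty)||\leq||\mu(v)||$, which is too weak, and trying to read the bound off from $M(v)=-||\beta||$ (Corollary \ref{cor I}, Theorem \ref{thm G}, Lemma \ref{lemma E}) is circular, since the equality $M(v_\infty)=M(v)$ is precisely the statement that the flow does not drop to a deeper Hesselink stratum. I would resolve this using the local structure of the negative gradient flow of $||\mu||^2$ near $C_{K\cdot\beta}$: by the convergence and Morse theory of Harada--Wilkin \cite{haradawilkin} and Sjamaar \cite{sjamaar}, $Y_\beta^{\min}$ is a slice of the unstable set of $C_{K\cdot\beta}$, and $p_\beta$ together with the splitting above identifies the non-negative normal directions along which the flow descends to $C_{K\cdot\beta}$, so that $v_\infty\in C_{K\cdot\beta}$ and $v\in S_{K\cdot\beta}$. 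A more self-contained alternative would compare the $\mu_\beta$-flow $\exp(it\beta)v\to p_\beta(v)$ with the $||\mu||^2$-flow through a two-parameter homotopy inside $\overline{G\cdot v}$, using that $p_\beta$ intertwines the $P_\beta$-action with its Levi quotient to transport the convergence already established on $Z_\beta^{\min}$ to all of $Y_\beta^{\min}$.
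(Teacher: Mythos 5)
Your global strategy---prove only the inclusion $GY_\beta^{\min}\subseteq S_{K\cdot\beta}$ and upgrade it to equality by the partition argument, with Proposition \ref{prop K} guaranteeing that the sets $GY_\beta^{\min}$ are pairwise disjoint---is exactly the paper's, and your treatment of $Z_\beta^{\min}$ (the splitting $||\mu||^2=||\mu-\beta^*||^2+||\beta||^2$ on $V^\beta$, so the two flows agree there and $Z_\beta^{\min}$ flows into $C_\beta\subseteq C_{K\cdot\beta}$) is sound. But the step you yourself flag as ``the main obstacle'' is a genuine gap, and neither of your proposed resolutions closes it. Asserting that $Y_\beta^{\min}$ is ``a slice of the unstable set of $C_{K\cdot\beta}$'' is not something Harada--Wilkin or Sjamaar hand you: they give convergence of the negative gradient flow to \emph{some} critical set, whereas the claim that points of $Y_\beta^{\min}$ converge to $C_{K\cdot\beta}$ specifically is precisely the inclusion $Y_\beta^{\min}\subseteq S_{K\cdot\beta}$ you are trying to establish. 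The ``two-parameter homotopy'' alternative is too vague to assess; in particular $p_\beta$ does not obviously intertwine the $||\mu||^2$-flow on $Y_\beta^{\min}$ with that on $Z_\beta^{\min}$.

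The paper closes this gap without ever controlling the limit $v_\infty$ of the flow of $v$ directly, and the missing idea is one whose ingredients you already have on the table. It uses the \emph{second} assertion of Proposition \ref{prop K}, which you never invoke: for $v\in GY_\beta^{\min}$, the point $\beta$ is the closest point to the origin of $\mu^*(\overline{G\cdot v})\cap\mathfrak{t}_+$. If $v$ lay in some other Morse stratum $S_{K\cdot\beta'}$, then $v_\infty\in\overline{G\cdot v}$ has $\mu^*(v_\infty)$ in the adjoint orbit of $\beta'$, which forces $||\beta'||>||\beta||$. On the other hand, your own $Z_\beta^{\min}$ argument, applied to $p_\beta(y)$ where $v=gy$ with $y\in Y_\beta^{\min}$, produces a point $c\in C_\beta\cap\overline{G\cdot v}$, hence $c\in S_{K\cdot\beta}$ and $\partial S_{K\cdot\beta'}\cap S_{K\cdot\beta}\neq\emptyset$. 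Since the Morse strata form a stratification in which the boundary of a stratum meets only strata of strictly larger norm, this contradicts $||\beta'||>||\beta||$. In short: instead of trying to prove the upper bound $||\mu(v_\infty)||\leq||\beta||$ by analysing the flow (which, as you correctly observe, monotonicity and the Hilbert--Mumford function cannot deliver non-circularly), one argues by contradiction through the frontier condition of the Morse stratification.
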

\begin{proof}
We shall prove that $GY_\beta^{\min} \subset S_{K \cdot \beta}$ as then these sets must be equal as we can write $V$ as a disjoint union of the Morse strata $S_{K \cdot \beta}$ and also as a disjoint union of the subsets $GY_\beta^{\min}$ by Proposition \ref{prop K}. Let $v \in GY_\beta^{\min}$ and suppose that $v$ belongs to some other Morse stratum indexed by ${\beta'} \in \mathfrak{t}_+$. As the Morse strata are $K$-invariant, we can assume without loss of generality that the negative gradient flow of $v$ under $||\mu||^2$ converges to $C_{\beta'}$ and so there is a point $v'$ in the orbit closure of $v$ such that $\mu^*(v') = \beta'$. Then by Proposition \ref{prop K}, we have that 
\begin{equation}\label{eqn a}
 || \beta'|| > ||\beta||.
\end{equation}
As $v \in GY_\beta^{\min}$, we can write $v = gy$ for $y \in Y_\beta^{\min}$. Then $p_\beta(y) \in Z_\beta^{\min}$ and $p_\beta(v)$ flows under the negative gradient flow of $||\mu - \beta^* ||^2$ to a point $c \in C_\beta$. Then $c \in \overline{G \cdot v}\cap S_{K \cdot \beta}$ and so $\partial S_{K \cdot \beta'} \cap S_{K \cdot \beta} \neq \emptyset$, but as the Morse strata form a stratification this contradicts (\ref{eqn a}). Therefore $GY_\beta^{\min} \subset S_{K \cdot \beta}$ as required. 
\end{proof}

\section{Stratifications of spaces of quiver representations}\label{sec quiv}

As mentioned above, there is a construction due to King of the moduli space of \lq semistable' representations of a quiver $Q$ with fixed invariants $d$ as a GIT quotient of a reductive group $G$ acting on an affine space $\text{Rep} (Q,d)$ with respect to a character $\rho$ \cite{king}. The notion of (semi)stability is determined by a stability parameter $\theta$ which is also used to construct the character $\rho $. 

Over the complex numbers, we know that the Morse stratification agrees with Hesselink's stratification for the action of $G = K_\CC$ on the complex vector space $\text{Rep} (Q,d)$ linearised by $\rho$ for any invariant inner product on $\mathfrak{K}$. In this section we describe another stratification of $\text{Rep} (Q,d)$ by Harder--Narasimhan types $\tau$ where the Harder--Narasimhan filtration of a quiver representation is defined by using both the stability parameter $\theta$ and the chosen inner product on $\mathfrak{K}$ (cf. Definition \ref{defn HN} below). We shall see that a Harder--Narasimhan stratum $S_\tau$ has a description as $S_{\tau} = G Y_\tau^{\text{ss}}$ and there is a retraction $p_\tau : Y_\tau^{\text{ss}} \ra Z_\tau^{\text{ss}}$ where $Z_\tau^{\text{ss}}$ can be explicitly described. Moreover, we shall see that the Harder--Narasimhan stratification, Morse stratification and Hesselink stratification all agree.

We briefly summarise here the known results on Harder--Narasimhan stratifications for quivers. Reineke describes a Harder--Narasimhan stratification for quivers and obtains formulae for the Betti numbers of the associated moduli spaces \cite{reineke}. However, Reineke's notion of Harder--Narasimhan filtration does not depend on a choice of invariant inner product on $\mathfrak{K}$ (the definition given by Reineke corresponds to the Harder--Narasimhan filtration associated to the Killing form in our definition below). Harada and Wilkin \cite{haradawilkin} show for the Killing form that the Morse stratification agrees with the Harder--Narasimhan stratification. The group $G$ is a product of general linear groups and so one can construct families of invariant inner products on $\mathfrak{K}$ by using a weighted sum of the Killing forms on the unitary Lie algebras which make up $\mathfrak{K}$. This idea is used by Tur \cite{tur} in his thesis, where he shows that the Harder--Narasimhan stratification and Hesselink's stratification coincide for any inner product associated to a collection of weights. It follows from this result and Theorem \ref{Hess is Morse} above that all three stratifications coincide, however we provide a short proof of this result below for completeness.

\subsection{GIT construction of quiver moduli}

A ($\CC$-)representation of a quiver $Q=(V,A,h,t)$ is a tuple $W = (W_v, \phi_e)$ consisting of a complex vector space $W_v$ for each vertex $v$ and a linear map $\phi_a : W_{t(a)} \ra W_{h(a)}$ for each arrow $a$. A morphism of quiver representations $f: W \ra W'$ is given by linear maps $f_v : W_v \ra W_v'$ for each vertex $v$ such that $f_{h(a)} \circ \phi_a = \phi'_a \circ f_{t(a)}$ for every arrow $a$. One forms the obvious notions of isomorphism, subrepresentation and quotient representation. The dimension of a quiver representation is $ \dim W = (\dim W_v) \in \NN^V.$

For a dimension vector $d = (d_v) \in \NN^V$, the complex affine space
\[ \text{Rep}(Q,d) = \oplus_{a \in A} \text{Hom}(\CC^{d_{t(a)}},\CC^{d_{h(a)}}) \]
parametrises representations $W$ of $Q$ of dimension $d$ with a choice of isomorphism $W_v \cong \CC^{d_v}$. The group
\[ G(Q,d) = \prod_{v \in V} \GL(d_v)\]
acts naturally on $\text{Rep}(Q,d)$ by conjugation: $(g \cdot \varphi)_a = g_{h(a)} \varphi_a g_{t(a)}^{-1}$ for $g=(g_v) \in G(Q,d)$ and $\varphi = (\varphi_a) \in \text{Rep}(Q,d)$. We note that there is a copy of the multiplicative group $\CC^*$ embedded in $G(Q,d)$ as $t \mapsto (tI_{d_v})$ which acts trivially on $\text{Rep}(Q,d)$ and so if we want to have stable points we must consider the action of $G(Q,d)/\CC^*$ or modify the definition of stability so that stable orbits can have positive dimensional stabilisers (the second approach is taken by King in \cite{king}). However, for the purposes of studying stratifications associated to this action, the presence of this copy of $\CC^*$ does not matter and so we work with the action of $G=G(Q,d)$.

The action is linearised by choosing a character $\rho$ of $G$. Following King \cite{king}, we let $\theta = (\theta_v) \in \ZZ^V$ denote a tuple of integers such that $\sum_v \theta_v d_v = 0$ and associate to $\theta$, the character $\rho = \rho_\theta :G \ra \CC^*$ given by
\[ \rho (g_v) = \prod_v \det(g_v)^{\theta_v}.\]
King uses the Hilbert--Mumford criterion to reinterpret the notion of $\rho$-semistability for points in $\text{Rep}(Q,d)$ as a condition for the corresponding quiver representation:

\begin{defn}\label{theta ss defn}
A representation $W$ of $Q$ of dimension $d$ is $\theta$-semistable if for all proper subrepresentations $W' \subset W$ we have $\theta(W'):= \sum_v \theta_v \dim W_v' \geq 0$. 
\end{defn}

The moduli space of $\theta$-semistable quiver representations is constructed by King as the quotient of $G$ acting on $\text{Rep}(Q,d) $ with respect to the character $\rho$ defined by $\theta$.

\subsection{Harder--Narasimhan filtrations for quivers}

As $G =K_\CC= G(Q,d)$ is a product of general linear groups $\GL(d_v)$, the Lie algebra $\mathfrak{K}$ is a sum of unitary Lie algebras $\mathfrak{u}(\CC^{d_v})$. As every invariant inner product on $\mathfrak{u}(\CC^{d_v})$ is a positive scalar multiple of the Killing form $\kappa_v$, the invariant inner products $(-,-)$ on $\mathfrak{K}$ are weighted sums $\sum \alpha_v \kappa_v$ for positive $\alpha_v$. Moreover, we shall assume $\alpha_v$ are integral so that the norm squared of an integral element in $\mathfrak{K}$ is integral (cf. Assumption \ref{int ass}). For $\alpha = (\alpha_v) \in \NN^V_+$, we let $(-,-)_\alpha$ denote the associated inner product.

\begin{defn}\label{defn HN}
For a representation $W$ of $Q$ (of any dimension), we say $W$ is $\theta$-semistable if for all proper subrepresentations
\[ \frac{\theta(W')}{\alpha(W')} \geq \frac{\theta(W)}{\alpha(W)} \]
where $\alpha(W) := \sum \alpha_v \dim W_v$. A Harder--Narasimhan filtration of $W$ (with respect to $\alpha$ and $\theta$) is a filtration $0 = W_{(0)} \subset W_{(1)} \subset \cdots \subset W_{(s)} =W$ by subrepresentations, such that the quotient represenations $W_i := W_{(i)}/W_{(i-1)}$ are $\theta$-semistable and
\[ \frac{\theta(W_1)}{\alpha(W_1)} < \frac{\theta(W_2)}{\alpha(W_2)} <\cdots < \frac{\theta(W_s)}{\alpha(W_s)} .\]
The Harder--Narasimhan type of $W$ (with respect to $\alpha$ and $\theta$) is $\tau(W) := (\dim W_1, \dots , \dim W_s)$.
\end{defn}

The standard techniques are used to show existence and uniqueness of the Harder--Narasimhan filtration. The trivial Harder--Narasimhan type for representations of dimension $d$ is $\tau_0 = (d)$ and the representations with this type are $\theta$-semistable. We have a decomposition
\[ \text{Rep}(Q,d) = \bigsqcup_{\tau} S_{\tau} \]
where $S_{\tau}$ indexes the subset of representations with Harder--Narasimhan type $\tau$.

\subsection{All three stratifications coincide}\label{sec 3 strat}

Let $\tau = (d_1, \dots ,d_s)$ be a Harder--Narasimhan type (with respect to $\alpha$ and $\theta$) for a representation of $Q$ of dimension $d$. We write $W_v:=\CC^{d_v} \cong W_{1,v} \oplus \cdots \oplus W_{s,v}$ where $W_{i,v}=\CC^{(d_i)_v} $, then every $(\varphi_a) \in \text{Rep}(Q,d)$ can be written as 
\[ \varphi_a = \left(\begin{array}{ccc}\varphi_a^{11} & \dots & \varphi_a^{1s} \\ \vdots & \ddots & \vdots \\ \varphi_a^{s1} & \dots & \varphi_a^{ss}\end{array}\right) \]
where $\varphi_a^{ij} :  W_{j,t(a)} \ra W_{i,h(a)}$. We let
\[ Z_\tau := \{ (\varphi_a) \in \text{Rep}(Q,d) : \varphi_a^{ij} = 0 \text{ for } i \neq j \} \cong \bigoplus_{i=1}^s \text{Rep}(Q,d_i) \]
and \[ Y_\tau := \{ (\varphi_a) \in \text{Rep}(Q,d) : \varphi_a^{ij} = 0 \text{ for } i > j \} ;\] thus we have a projection $p_\tau : Y_\tau \ra Z_\tau$. Let $\text{Rep}(Q,d_i)^{\theta\text{-ss}}\subset \text{Rep}(Q,d_i)$ denote the open subset of $\theta$-semistable representations of dimension $d_i$. Then we define $Z_\tau^{\text{ss}} \subset Z_\tau$ to be the image of $\oplus_{i} \text{Rep}(Q,d_i)^{\theta\text{-ss}}$ under the above isomorphism and $Y_\tau^{\text{ss}} :=p_\tau^{-1}(Z_\tau^{\text{ss}})$. By construction of $Y_\tau^{\text{ss}}$, every point in $Y_\tau^{\text{ss}}$ has a Harder--Narasimhan filtration of type $\tau$.

\begin{defn} We define $\beta =\beta(\tau) \in \mathfrak{K} $ by 
\[ \beta_v = 2 \pi i \: \text{diag}(\beta_1, \dots , \beta_1,\beta_2, \dots , \beta_2, \dots ,\beta_s, \dots , \beta_s) \in \mathfrak{u}(\CC^{d_v})
\]
where $\beta_i := - \theta(d_i)/\alpha(d_i)$ appears $(d_{i})_v$ times in $\beta_v$.\end{defn}

We let $\lambda_\beta$ denote the 1-PS associated to $\beta$ so $d \lambda_\beta(2 \pi i) = n\beta$ for some $n >0$ and we let $S_{[\lambda_\beta]}$ denote the Hesselink strata for $(-,-)_\alpha$. 

\begin{prop}\label{lemma P}
Let $\beta = \beta(\tau)$; then 
\begin{enumerate}
\renewcommand{\labelenumi}{\roman{enumi})}
 \item $V^{\lambda_\beta} = Z_\tau$.
\item $V_+^{\lambda_\beta} = Y_\tau$.
\item $Z_{\lambda_\beta} = Z_\tau^{\text{ss}}$.
\item $S_{\lambda_\beta} = Y_\tau^{\text{ss}}$.
\end{enumerate}
\end{prop}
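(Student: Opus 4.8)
The plan is to obtain iv) as a short formal consequence of parts i)--iii) of this same proposition together with the general structural result Lemma \ref{lemma on Slambda}, once the two retractions $p_{\lambda_\beta}$ and $p_\tau$ have been identified. Lemma \ref{lemma on Slambda} gives $S_{\lambda_\beta} = p_{\lambda_\beta}^{-1}(Z_{\lambda_\beta})$, where $p_{\lambda_\beta} : V_+^{\lambda_\beta} \to V^{\lambda_\beta}$ sends $\varphi$ to $\lim_{t \to 0} \lambda_\beta(t) \cdot \varphi$, whereas by definition $Y_\tau^{\text{ss}} = p_\tau^{-1}(Z_\tau^{\text{ss}})$. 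Since part ii) identifies $V_+^{\lambda_\beta} = Y_\tau$ and part i) identifies $V^{\lambda_\beta} = Z_\tau$, the two retractions already have the same source and target; I would then only need to check that they are literally the same map and to feed in part iii), namely $Z_{\lambda_\beta} = Z_\tau^{\text{ss}}$.

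First I would carry out the weight computation underlying parts i) and ii) and read off the explicit form of $p_{\lambda_\beta}$. Writing $\varphi = (\varphi_a)$ in block form relative to $W_v = \oplus_i W_{i,v}$, the $1$-PS $\lambda_\beta$ acts on $W_{i,v}$ with weight $n\beta_i$ (since $d\lambda_\beta(2\pi i) = n\beta$ and $\beta_v$ acts on $W_{i,v}$ by the scalar $2\pi i\,\beta_i$), so the conjugation action gives
\[ (\lambda_\beta(t) \cdot \varphi)_a^{ij} = t^{\,n(\beta_i - \beta_j)}\, \varphi_a^{ij}. \]
The Harder--Narasimhan inequalities $\theta(W_1)/\alpha(W_1) < \cdots < \theta(W_s)/\alpha(W_s)$ translate into $\beta_1 > \cdots > \beta_s$, so $\beta_i - \beta_j$ is positive for $i < j$, zero for $i = j$, and negative for $i > j$. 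Hence $\lim_{t \to 0}\lambda_\beta(t) \cdot \varphi$ exists if and only if $\varphi_a^{ij} = 0$ for all $i > j$ (which confirms $V_+^{\lambda_\beta} = Y_\tau$), and when it exists the limit discards the strictly upper-triangular blocks and keeps the block-diagonal part. This limit is exactly $p_\tau(\varphi)$, so I would conclude $p_{\lambda_\beta} = p_\tau$ as maps $Y_\tau \to Z_\tau$.

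With the retractions identified the conclusion is immediate:
\[ S_{\lambda_\beta} = p_{\lambda_\beta}^{-1}(Z_{\lambda_\beta}) = p_\tau^{-1}(Z_\tau^{\text{ss}}) = Y_\tau^{\text{ss}}, \]
using Lemma \ref{lemma on Slambda} for the first equality, the identity $p_{\lambda_\beta} = p_\tau$ together with part iii) for the second, and the definition of $Y_\tau^{\text{ss}}$ for the third. One caveat to flag is that Lemma \ref{lemma on Slambda} requires $\lambda_\beta$ to be a nontrivial $1$-PS indexing a Hesselink stratum; this holds exactly when $\tau$ is a nontrivial Harder--Narasimhan type, equivalently $\beta \neq 0$. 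The trivial type $\tau_0 = (d)$, for which $\beta = 0$ because $\theta(d) = 0$, would be handled separately, both sides there reducing to the $\theta$-semistable locus $\text{Rep}(Q,d)^{\theta\text{-ss}}$ by King's characterisation of semistability.

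The genuine mathematical content is therefore not in iv) itself, which is this brief formal deduction, but is imported through part iii). The hard part will be establishing $Z_{\lambda_\beta} = Z_\tau^{\text{ss}}$: this identifies the GIT-semistable locus for the Levi $G_{\lambda_\beta} = \prod_i G(Q,d_i)$ acting on $V^{\lambda_\beta} = Z_\tau$ with respect to $\rho_{\lambda_\beta}$ (via Proposition \ref{prop on Zlambda}) with the product $\oplus_i \text{Rep}(Q,d_i)^{\theta\text{-ss}}$. The point one must verify there is that the restriction of $\rho_{\lambda_\beta}$ to each block $G(Q,d_i)$ induces, up to a positive scalar, the character $\rho_\theta$ defining $\theta$-semistability in dimension $d_i$; granting that input, part iv) follows exactly as above.
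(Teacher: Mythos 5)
Your treatment of parts i), ii) and iv) is correct and matches the paper's (implicit) reasoning: the block-weight computation $(\lambda_\beta(t)\cdot\varphi)_a^{ij} = t^{n(\beta_i-\beta_j)}\varphi_a^{ij}$ together with $\beta_1 > \cdots > \beta_s$ gives i) and ii), and iv) does follow from iii) by identifying $p_{\lambda_\beta}$ with $p_\tau$ and invoking Lemma \ref{lemma on Slambda} (this is exactly what the paper means by ``iv) follows from iii)''). Your caveat about the trivial type $\tau_0=(d)$ is also sensible. But there is a genuine gap: part iii) is the entire mathematical content of the proposition, and you do not prove it --- you only name it as ``the hard part'' and gesture at what would need to be checked. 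A proof that defers its only substantive claim is not a proof. The paper devotes essentially all of its argument to iii): it applies Proposition \ref{prop on Zlambda} to reduce to checking $\mu^{\rho_{\lambda_\beta}}(\varphi,\lambda')\geq 0$ for 1-PSs $\lambda'$ of the Levi $G_{\lambda_\beta}$ with $\lim_{t\to 0}\lambda'(t)\cdot\varphi$ existing, rewrites this as $||\lambda_\beta||^2_\alpha\bigl[(\rho,\lambda')+(\beta,d\lambda'(2\pi i))_\alpha\bigr]\geq 0$, diagonalises $\lambda'$ so that the condition becomes $\sum_{i,j}\gamma_j\bigl(\theta(W_i^j)-\tfrac{\theta(d_i)}{\alpha(d_i)}\alpha(W_i^j)\bigr)\geq 0$ over subrepresentations $W_i^{(j)}\subset W_i$, and then argues both directions (semistability of each $W_i$ implies the inequality; a destabilising subrepresentation yields an explicit $\lambda'$ violating it).

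Moreover, the one sentence you do offer about iii) is slightly wrong as stated: the restriction of $\rho_{\lambda_\beta}$ to the block $G(Q,d_i)$ is not a positive multiple of the character $\rho_\theta$, but rather of the character associated to the shifted parameter $\theta+\beta_i\alpha = \theta - \tfrac{\theta(d_i)}{\alpha(d_i)}\alpha$. This shift is essential: it is what makes the pairing with $d_i$ vanish, so that the resulting GIT semistability condition on $\mathrm{Rep}(Q,d_i)$ is the slope condition $\theta(W')/\alpha(W')\geq\theta(d_i)/\alpha(d_i)$ of Definition \ref{defn HN} rather than the unnormalised condition $\theta(W')\geq 0$ of Definition \ref{theta ss defn} (which generally fails to be the right notion when $\theta(d_i)\neq 0$). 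So even the roadmap you sketch for iii) would need correcting before it could be carried out.
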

\begin{proof}
The first two statements follow immediately from the definition of $\beta$ where we note that $\beta_1 > \dots >
 \beta_s$ as $\tau$ is a Harder--Narasimhan type and so the \lq slopes' $\theta/\alpha$ are increasing. As iv) follows from iii), it suffices to prove iii). Let $\varphi \in Z_\tau^{\text{ss}}$; then by Proposition \ref{prop on Zlambda} we know that $\varphi \in Z_{\lambda_\beta}$ if for every 1-PS $\lambda'=(\lambda'_{i,v})$ of $G_{\lambda_\beta} = \Pi_{i,v} \GL(W_{i,v})$ for which $\lim_{t \to 0} \lambda'(t) \cdot \varphi$ exists we have 
\begin{equation}\label{eqn d} \mu^{\rho_{\lambda_\beta}}(\varphi,\lambda') =||\lambda_\beta||^2_\alpha(\rho,\lambda') - (\rho,\lambda_\beta)(\lambda_\beta,\lambda')_\alpha =||\lambda_\beta||^2_\alpha \left[ (\rho,\lambda') +(\beta,d\lambda'(2 \pi i))_\alpha  \right] \geq 0\end{equation}
where the second equality follows as $\beta$ is scaled so that $||\beta||^2_\alpha=-\frac{1}{2 \pi i}d\rho \cdot \beta =-\frac{1}{n}(\rho,\lambda_\beta)$. 

We simultaneously diagonalise the action of each $\lambda'_{i,v}$ on $W_{i,v}$, so we have weights $\gamma_1 > \dots > \gamma_r$ and decompositions $W_{i,v} = W^{1}_{i,v} \oplus \cdots \oplus W^{r}_{i,v}$ such that $\lambda_{i,v}(t)$ acts on $W^{j}_{i,v}$ by $t^{\gamma_j}$. We note that $\lim_{t \to 0} \lambda'(t) \cdot \varphi$ exists if and only if $W_{i}^{(j)}=(W^{1}_{i,v} \oplus \cdots \oplus W^{j}_{i,v}, \varphi_a^{ii}|)$ is a subrepresentation of $W_i=(\CC^{(d_i)_v}.\varphi_a^{ii})$ for all $i$ and $j$. Then (\ref{eqn d}) is equivalent to
\begin{equation}\label{eqn e} \sum_{i,j} \gamma_j \left(\theta(W_i^j) +\beta_i \alpha(W_i^j)\right) = \sum_{i,j} \gamma_j \left( \theta(W_i^j) - \frac{\theta(d_i)}{\alpha(d_i)} \alpha(W_i^j) \right) \geq 0. \end{equation}                                                                                                                                                                                                 As $\varphi \in Z_\tau^{\text{ss}}$, the representations $W_i=(W_{i,v}.\varphi_a^{ii})$ of dimension $d_i$ are $\theta$-semistable. Moreover, as $W_i^{(j)}$ are subrepresentations of $W_i$, semistability for all $i$ implies that (\ref{eqn e}) holds. 

Conversely if $\varphi \in Z_\tau - Z_\tau^{\text{ss}}$, then there must be a destabilising subrepresentation $W_i^1$ of $W_i$ for some $1 \leq i \leq s$. We can pick an orthogonal complement $W_{i,v}^{2}$ to $W_{i,v}^1 \subset W_{i,v}$ and define a 1-PS $\lambda'$ of $G_{\lambda_\beta}$ by
\[ \lambda_{i,v}(t) = \left( \begin{array}{cc} t I_{W_{i,v}^1} & 0 \\ 0 & I_{W_{i,v}^2} \end{array} \right) \text{  and  } \lambda_{k,v}(t) = I_{W_{k,v}} \text{ for } k \neq i.\]
Then as $W_i^1$ destabilises $W_i$, we have
\[ \frac{\mu^{\rho_{\lambda_\beta}}(\varphi,\lambda')}{||\lambda_\beta||^2_\alpha} = (\rho,\lambda') +(\beta,d\lambda'(2 \pi i))_\alpha = \theta(W_i^1) + \beta_i \alpha(W_i^1) <0\]
which proves $\varphi \notin Z_{\lambda_\beta}$ by Proposition \ref{prop on Zlambda}.
\end{proof}

\begin{thm}
 The Harder--Narasimhan stratification of the space of representations of a quiver of fixed dimension agrees with both the Morse and Hesselink stratification for $(-,-)_\alpha$. Moreover, the Harder--Narasimhan strata have the form $S_\tau = G Y_\tau^{\text{ss}}$. 
\end{thm}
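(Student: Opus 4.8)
The plan is to identify each Harder--Narasimhan stratum $S_\tau$ with the Hesselink stratum $S_{[\lambda_\beta]}$ for $\beta = \beta(\tau)$ and then invoke Theorem \ref{Hess is Morse} to bring in the Morse stratification. The technical heart of the comparison is already contained in Proposition \ref{lemma P}, so what remains is to assemble these identifications together with one genuinely new ingredient: the description $S_\tau = G Y_\tau^{\text{ss}}$ relating the block-triangular loci to Harder--Narasimhan filtrations.

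First I would establish $S_\tau = G Y_\tau^{\text{ss}}$. The inclusion $G Y_\tau^{\text{ss}} \subseteq S_\tau$ is immediate: by construction every $\varphi \in Y_\tau^{\text{ss}}$ carries a filtration of type $\tau$ whose subquotients are the $\theta$-semistable representations recorded by $Z_\tau^{\text{ss}}$, so $\tau(\varphi) = \tau$; since the Harder--Narasimhan type is an isomorphism invariant and $G$ acts by base change, $S_\tau$ is a union of $G$-orbits and hence contains $G Y_\tau^{\text{ss}}$. For the reverse inclusion I would use existence and uniqueness of the Harder--Narasimhan filtration: given $\varphi$ with $\tau(\varphi) = \tau$ and filtration $0 = W_{(0)} \subset \cdots \subset W_{(s)} = W$, choose $g \in G$ conjugating this filtration into the standard flag whose $j$-th step is $W_{1,v} \oplus \cdots \oplus W_{j,v}$. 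Then $g \cdot \varphi$ is block upper triangular, i.e. lies in $Y_\tau$, and its diagonal blocks are precisely the subquotients $W_i$, which are $\theta$-semistable of dimension $d_i$; hence $p_\tau(g \cdot \varphi) \in Z_\tau^{\text{ss}}$ and $g \cdot \varphi \in Y_\tau^{\text{ss}}$, giving $\varphi \in G Y_\tau^{\text{ss}}$.

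Next I would feed in Proposition \ref{lemma P}. Part (iv) gives $S_{\lambda_\beta} = Y_\tau^{\text{ss}}$, and since every Hesselink stratum is the $G$-saturation of its blade (the identity $S_{[\lambda]} = G S_\lambda$ following Theorem \ref{kempf thm}), we obtain $S_{[\lambda_\beta]} = G S_{\lambda_\beta} = G Y_\tau^{\text{ss}} = S_\tau$. Theorem \ref{Hess is Morse} then yields $S_{K \cdot \beta} = S_{[\lambda_\beta]} = S_\tau$, so every nonempty Harder--Narasimhan stratum coincides as a subset of $\text{Rep}(Q,d)$ with a Morse stratum and a Hesselink stratum.

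Finally I would promote this pointwise matching to an equality of stratifications by a partition argument. Both $\{ S_\tau \}$ and $\{ S_{K \cdot \beta} \}$ decompose $\text{Rep}(Q,d)$ into disjoint pieces, so the equalities $S_\tau = S_{K \cdot \beta(\tau)}$ force $\tau \mapsto \beta(\tau)$ to be injective on nonempty types, and since the $S_\tau$ cover the space each nonempty Morse (and likewise Hesselink) index is hit by exactly one $\tau$. I expect the main obstacle to be the reverse inclusion $S_\tau \subseteq G Y_\tau^{\text{ss}}$: one must carefully convert the abstract Harder--Narasimhan filtration of $\varphi$ into the concrete block-triangular normal form in $Y_\tau$ through a single element of $G$ and verify that the induced diagonal blocks are exactly the $\theta$-semistable subquotients parametrised by $Z_\tau^{\text{ss}}$.
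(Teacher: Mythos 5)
Your proposal is correct and follows essentially the same route as the paper: both reduce everything to Proposition \ref{lemma P} together with the identities $S_{[\lambda_\beta]}=GS_{\lambda_\beta}$ and Theorem \ref{Hess is Morse}, identifying $S_\tau$ with $GY_\tau^{\text{ss}}=S_{[\lambda_{\beta(\tau)}]}$. The only (harmless) difference is that you prove the reverse inclusion $S_\tau\subseteq GY_\tau^{\text{ss}}$ directly by conjugating the Harder--Narasimhan filtration to the standard flag, whereas the paper obtains the equality from the fact that both families partition $\mathrm{Rep}(Q,d)$ with every Hesselink stratum contained in a Harder--Narasimhan stratum; your version simply makes explicit the covering fact that the partition argument implicitly relies on.
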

\begin{proof}
 Let $\tau$ be a Harder--Narasimhan type as above; then clearly $Y_\tau^{\text{ss}} \subset S_\tau$ and as the Harder--Narasimhan strata are $G$-invariant we have $G Y_\tau^{\text{ss}} \subset S_\tau$. However, by Proposition \ref{lemma P} above $G Y_\tau^{\text{ss}} = GS_{\lambda_\beta}=S_{[\lambda_\beta]}$ where $\beta = \beta(\tau)$. Since the Harder--Narasimhan strata and the Hesselink strata both form a stratification of $\text{Rep}(Q,d)$ and every Hesselink stratum is contained in a Harder--Narasimhan stratum, these stratifications must coincide and $S_\tau = G Y_\tau^{\text{ss}}$. It follows from Theorem \ref{Hess is Morse} that all three stratifications coincide.
\end{proof}

\bibliographystyle{amsplain}
\bibliography{references}

\medskip \medskip

\noindent \textsc{Institut f\"{u}r Mathematik, Universit\"{a}t Z\"{u}rich,}\\
\textsc{Winterthurerstrasse 190, CH-8057, Z\"{u}rich, Switzerland}
\medskip

\noindent \texttt{victoria.hoskins@math.uzh.ch}   

\end{document}